\newcommand{\bA}{\mathbb{A}}
\newcommand{\bG}{\mathbb{G}}
\newcommand{\bP}{\mathbb{P}}
\newcommand{\bQ}{\mathbb{Q}}
\newcommand{\bZ}{\mathbb{Z}}
\newcommand{\km}{\mathfrak{m}}
\newcommand{\oH}{\operatorname{H}}
\newcommand{\cA}{\mathcal{A}}
\newcommand{\cB}{\mathcal{B}}
\newcommand{\cC}{\mathcal{C}}
\newcommand{\cD}{\mathcal{D}}
\newcommand{\cE}{\mathcal{E}}
\newcommand{\cF}{\mathcal{F}}
\newcommand{\cG}{\mathcal{G}}
\newcommand{\cI}{\mathcal{I}}
\newcommand{\cL}{\mathcal{L}}
\newcommand{\cM}{\mathcal{M}}
\newcommand{\cN}{\mathcal{N}}
\newcommand{\cO}{\mathcal{O}}
\newcommand{\cP}{\mathcal{P}}
\newcommand{\cR}{\mathcal{R}}
\newcommand{\cT}{\mathcal{T}}
\newcommand{\cU}{\mathcal{U}}
\newcommand{\cX}{\mathcal{X}}
\newcommand{\cY}{\mathcal{Y}}
\newcommand{\cZ}{\mathcal{Z}}
\newcommand{\qB}{R}
\newcommand{\qR}{\cB}
\newcommand{\spec}{\operatorname{Spec}}
\newcommand{\Gm}{\bG_{{\rm m}}}
\newcommand{\GL}{\operatorname{GL}}
\newcommand{\PGL}{\operatorname{PGL}}
\newcommand{\Id}{\operatorname{Id}}
\newcommand{\bmu}{\bm{\mu}}
\newtheorem{theorem}{Theorem}[section]
\newtheorem{Teo}[theorem]{Theorem}
\newtheorem*{Teo*}{Theorem}
\newtheorem{Lemma}[theorem]{Lemma}
\newtheorem{Cor}[theorem]{Corollary}
\newtheorem{Prop}[theorem]{Proposition}
\newtheorem*{Ques*}{Question}
\theoremstyle{definition}
\newtheorem{Oss}[theorem]{Remark}
\newtheorem*{Oss'}{Remark}
\newtheorem{EG}[theorem]{Example}
\newtheorem{Def}[theorem]{Definition}
\newtheorem*{Def*}{Definition}
\newtheorem{Notation}[theorem]{Notation}
\begin{document}
\title[Degenerations of twisted maps to algebraic stacks]{Degenerations of twisted maps to algebraic stacks}
	\author[A. Di Lorenzo]{Andrea Di Lorenzo}
	\address[A. Di Lorenzo]{Humboldt Universit\"{a}t zu Berlin, Germany}
	\email{andrea.dilorenzo@hu-berlin.de}
	\author[G. Inchiostro]{Giovanni Inchiostro}
	\address[G. Inchiostro]{University of Washington, Seattle, WA}
	\email{ginchios@uw.edu}
	\maketitle
	\begin{abstract}
		We give a definition of twisted map to a quotient stack with projective good moduli space, and we show that the resulting functor satisfies the existence part of the valuative criterion for properness.
	\end{abstract}
\section{Introduction}
Given a projective variety $X$, a classic problem is to study the geometry of curves in $X$. How many rational curves in $\bP^2$ of degree $d$ pass through $3d-1$ points? Or how many rational curves are there on a quintic threefold? These are two examples of famous questions, which were answered by Kontsevich \cite{Kon} by studying \textit{degenerations} of maps from curves to $X$. Indeed, Kontsevich constructed the so called moduli stack of stable maps, which since then proved to be one the main tool one can use to solve enumerative problems as the ones above (\cites{Kon, FP}).

With the recent developments in the theory of algebraic stacks \cites{Alp, halpner, AHH, luna}, it is natural to wonder what happens if one replaces the target projective variety $X$ with an algebraic stack $\cM$. For example, if the target algebraic stack is Deligne-Mumford and admits a projective coarse moduli space, the situation is similar to the case where $X$ is a projective variety. Abramovich and Vistoli (see \cites{AV_compactifying, ACV, AOV}) constructed a
space of so-called \emph{twisted} stable maps, that one can use to compactify the space of stable maps from a curve to $\cM$.

If the target is $\cB\Gm$ instead, studying degenerations of maps from a curve to $\cB\Gm$ is strongly related to the problem of finding a compactification of the universal Jacobian \cite{Cap} or, if the target stack is $\cB\GL_n$, a compactification of the universal moduli space of vector bundles over $\cM_{g}$ \cite{Pand}.

The goal for this paper is to study degenerations of maps from a family of curves to an algebraic stack $\cM$ admitting a projective good moduli space $\cM\to M$, with the idea of interpolating between the stable maps of Kontsevich and Caporaso's compactified Jacobian. We introduce in \Cref{subsection twisted maps def} a notion of family of \textit{twisted} maps. Our main result can be formulated informally as follows.
\begin{Teo}[\Cref{theorem existence part val criterion}]\label{main theorem intro}
 Let $\cM:=[X/G]$ be a quotient stack by a linear group $G$ with a projective good moduli space, and assume we are given a family of twisted maps $\phi:\cC^* \to \cM$ over the punctured disc $\mathbb{D}^*$. Then, up to replacing $\mathbb{D}^*$ with a ramified cover, we can extend $\phi$ to a family of twisted maps $\cC\to \cM$ over the whole disk $\mathbb{D}$.
\end{Teo}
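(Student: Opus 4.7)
The plan is to combine stable reduction for twisted curves with a Langton-style procedure using one-parameter subgroups of $G$. The key observation is that, since $G$ is a linear group, we can fix a $G$-equivariant closed embedding $X \hookrightarrow \bP(V)$ for some $G$-representation $V$, which gives a closed immersion $\cM = [X/G] \hookrightarrow [\bP(V)/G]$. Under this embedding, the datum of a twisted map $\cC^* \to \cM$ becomes the datum of a $G$-torsor $\cP^* \to \cC^*$ together with a $G$-equivariant morphism $\cP^* \to X \subset \bP(V)$, and the existence question becomes one about extending torsors plus equivariant sections.

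I would first extend, after a ramified base change, the family of twisted curves $\cC^* \to \mathbb{D}^*$ to a family $\cC \to \mathbb{D}$ using the standard stable reduction for twisted nodal curves. Composing the given twisted map with the good moduli space morphism $\cM \to M$ and using the projectivity of $M$, I would then extend the induced morphism $\cC^* \to M$ to $\cC \to M$ by the valuative criterion for the projective variety $M$ (possibly after further birational modification of $\cC$ and further ramified base change). The central task is to \emph{lift} this extended morphism to $\cM$. Away from a finite set of special points on the central fiber $\cC_{0}$, this lift is already provided by $\phi$. Near each special point $p\in\cC_{0}$, I would extend the $G$-torsor $\cP^*$ to a $G$-torsor $\cP$ on $\cC$ by choosing the orbifold structure at $p$ large enough to absorb the monodromy of $\cP^*$ around the puncture, and then extend the equivariant morphism as a rational map $\cP\dashrightarrow \bP(V)$.

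The hard part is to modify this extension so that it becomes a regular morphism whose image factors through $X$. For this, I would repeatedly apply Langton / Kempf--Ness style modifications: when the rational map has a base point on, or whose image on, a component of the central fiber lies outside the semistable locus of $X$, one uses a destabilizing one-parameter subgroup of $G$ to \emph{rotate} the family by a $\Theta=[\bA^{1}/\Gm]$-modification near $p$, and replaces $\cP$ and the morphism accordingly. The procedure is expected to terminate because the good moduli space $M$ is projective: each iteration strictly increases a Hilbert--Mumford-type invariant that is bounded above by a numerical function of the extended map to $M$. Finally, one checks that the image lies in $X\subset \bP(V)$ via the valuative criterion applied to $X$ together with the fact that the generic fiber already maps into $X$.

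The main obstacle I expect is bookkeeping: one has to ensure that the iterative procedure simultaneously (i) terminates, (ii) produces a morphism which is representable (so that the limit is a \emph{twisted} map in the sense of the paper and not merely a morphism to $\cM$ that factors through a gerbe), and (iii) is compatible with the nodal orbifold structure inherited from stable reduction. The interplay between the orbifold structure at nodes and the $\Theta$-modifications used to resolve base points is where the twisted-curve formalism and the GIT theory of $[X/G]$ have to be combined carefully, and this is the technical heart of the proof.
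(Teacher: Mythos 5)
Your plan is a genuinely different route from the paper, and while it is an attractive heuristic it has two concrete gaps that I do not think are merely bookkeeping.

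First, the step where you extend the $G$-torsor $\cP^*$ to $\cC$ ``by choosing the orbifold structure at $p$ large enough to absorb the monodromy'' is borrowed from the Deligne--Mumford setting, where the torsor group is finite and the obstruction to extending a torsor across a codimension-two point is an element of a finite fundamental group that a root stack can kill. For a reductive $G$ such as $\GL_n$ this picture breaks down: a $G$-torsor over a punctured disk has no finite ``monodromy'' to absorb, and inserting stacky structure does not help. What actually makes the codimension-two extension of torsors work in the paper is Hartshorne's theorem that reflexive sheaves on a smooth surface are locally free (\Cref{prop smooth purity}), combined with a serious codimension-one argument (\Cref{teorema fix}) that reduces to connected stabilizers and uses the triviality of $H^1$ for connected reductive groups over $C_1$ fields, Rydh's push-out theorem, and Zariski's main theorem. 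There is no analogue of the orbifold-monodromy trick here.

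Second, the Langton/Kempf--Ness loop with a Hilbert--Mumford invariant does not accurately model the obstruction. In this setting $\cM=[X/G]\to M$ is already a good moduli space, so in the GIT language every point of $X$ is already semistable and there is no ``unstable locus'' to rotate away from; your proposed invariant has nothing to strictly increase. The real obstruction that the paper isolates lives at the \emph{nodes} of the special fiber: writing the node as a pushout of its two branches along the singular locus, the map on each branch always extends, and what may fail to extend is the \emph{gluing isomorphism} of the restricted torsors. This is a concrete valuation-theoretic quantity; after reducing to a maximal torus by Smith normal form, the paper fixes it by a finite sequence of explicit $(m,d)$-twisted blow-ups (\Cref{lemma blow up + covering stack does the job}, \Cref{proposition val criterion for the nodes}, \Cref{cor:val criterion for the nodes iterated}), each inserting one stacky $\bP^1$ whose degree is exactly controlled. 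Termination is automatic because there are only $n$ line bundles to repair and each is repaired once. The final clean-up — contracting any inserted $\bP^1$ that factors through a gerbe — is done by an Artin-type contraction theorem for stacky surfaces (\Cref{theorem contracting stacky P1s}), which again has no GIT content. If you want to pursue the $\Theta$-modification viewpoint, the missing idea is to identify your ``destabilizing one-parameter subgroup'' modification with the paper's $(m,d)$-blow-up at a node and to replace the Hilbert--Mumford bound by the explicit Smith-normal-form control of the gluing valuations; without that translation the proposal does not close.
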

More precisely, we prove that the functor of twisted maps to an algebraic stack having a projective good moduli space satisfies the existence part of the valuative criterion for properness. 

As already said, our definition of twisted map interpolates between the quasistable curves of Caporaso and the twisted stable curves of Abramovich and Vistoli. We show in \ref{subsection stacky nodes are necessary} and \ref{subsubsection P1s are necessary} that our definition is minimal: one cannot drop neither the stacky structure nor the destabilizing $\bP^1$'s and still hope for an analogue of \Cref{main theorem intro} to be true.

The main contribution of this work, beyond \Cref{main theorem intro}, is the explicit nature of our algorithm for constructing the degeneration of \Cref{main theorem intro} around each destabilizing $\bP^1$ (see Lemma \ref{lemma blow up + covering stack does the job}). 
For example, in \Cref{section example} we study some limits obtained from our algorithm in the case where target stack is $[\bA^1/\Gm]$ (namely, the stack parametrizing line bundles with section). In our example, we argue that one can assume the chain of destabilizing stacky $\bP^1$'s to have length at most 1, to map to the closed point of $[\bA^1/\Gm]$, and we explicitly describe the line bundle on the family of curves. We expect a similarly accessible description to be available in general.

\subsection*{Where are we headed}
As it is, our definition of twisted maps does not lead to an algebraic stack, essentially because we are imposing no numerical stability conditions.
For instance, when the target stack is $\cB\Gm$, we can ask for the line bundles to satisfy Caporaso's basic inequality \cite{Cap}*{0.3}, a numerical condition involving the degrees of the line bundles on the irreducible components of the curve. This is the key ingredient for making the moduli problem an algebraic stack with a projective good moduli space.

However, our definition of twisted maps can be regarded as a first step towards the construction of a stack of twisted \textit{stable} maps to a quotient stack having a projective good moduli space.
The next step in our project is to tackle the case $\cM=[X/\Gm^n]$, leveraging our
understanding of the behaviour of degenerations of twisted maps. In particular, we aim to show that, after imposing some numerical conditions, we actually get an algebraic stack which is $\Theta$-reductive and has unpunctured inertia: applying the results of Alper, Halpern-Leistner and Heinloth \cite{AHH}*{Theorem A}, we would get a good moduli space of twisted stable maps to $[X/\Gm^n]$. 

Another interesting direction to study is when $\cM=\cB G$ for a linear algebraic group $G$; in this case our functor of twisted maps can be regarded as the functor of $G$-torsors over families of curves, and our main theorem shows how to extend these families of $G$-torsors over curves. Hopefully, after imposing some numerical stability conditions, one can extract a proper algebraic stack out of our functor; we are currently investigating this path too.

\subsection*{Organization of the paper} In \Cref{section def} we introduce the main object of interest, namely twisted maps to an algebraic stack, and we present some basic properties of them. For example, we show that our definition recovers the one of Abramovich and Vistoli if $\cM$ is Deligne-Mumford. In \Cref{sec:fix} we show how to extend maps to codimension one points of the special fiber of the family of curves (\Cref{teorema fix}). In \Cref{section technical results} we introduce the main tools to extend maps to codimension two points: we prove two technical propositions (\Cref{prop smooth purity} and \Cref{proposition val criterion for the nodes}) concerning extensions of line bundles over stacky surfaces, that will be essential for proving the main results of this section. \Cref{section proof val criterion} is divided into two parts. In the first part, we prove our main result (\Cref{main theorem intro}), and in the second part (namely \Cref{subsection contracting rational curves}) we prove some results on the birational geometry of stacky surfaces, which are of independent interest: we study when one can contract certain (stacky) $\bP^1$s on a surface. Our results can be understood as a generalization of the results of Artin in \cite{Artin_contracting_curves} to stacky surfaces. Finally in \Cref{section example} we run a specific example: the case of twisted maps to $\cM=[\bA^1/\Gm]$.
\subsection*{Conventions} 
In what follows, unless otherwise stated, every stack and morphism is assumed to be essentially of finite type over a field $k$ of characteristic zero. If $\cC$ is a one dimensional Deligne-Mumford stack with coarse moduli space $\cC\to C$, and $\cL$ is a line bundle on $\cC$, there is a minimal natural number $n$ such that $\cL^{\otimes n}$ descends to a line bundle $L$ on $C$. The \textit{degree} of $\cL$ will be $\frac{\operatorname{deg}(L)}{n}$.

\subsection*{Acknowledgements} We are thankful to Dan Abramovich, Jarod Alper, Dori Bejleri, Samir Canning, Andres Fernandez Herrero, Daniel Halpern-Leistner, Ming Hao Quek, Mauro Porta, Zinovy Reichstein, Minseon Shin and Angelo Vistoli for helpful conversations.  

\section{Twisted maps to algebraic stacks}\label{section def}
In this section we first define twisted maps with target an algebraic stack $\cM$ having a projective good moduli space (\Cref{def twisted map}). Then we motivate our definition, showing that it recovers the one for Deligne-Mumford stacks \cites{AV_compactifying}. We finally provide two examples showing that our definition is somehow ``minimal", i.e. both stacky nodes and destabilizing $\bP^1$s have to be included in the domain curve.

\subsection{Twisted maps}\label{subsection twisted maps def}
We begin with the notion of quasi stable maps.
\begin{Def}
Let $n\ge 0$ be an integer, let $(C;p_1,...,p_n)$ be a nodal curve together with $n$ smooth points and let $M$ be a projective variety with ample line bundle $L$. We say that $f:C\to M$ is \emph{quasi stable} if $f^*L\otimes \omega_C(p_1+...+p_n)$ is nef, and the irreducible components where $f^*L\otimes \omega_C(p_1+...+p_n)$ is not positive are isomorphic to $\bP^1$. Such components are called \emph{destabilizing $\bP^1$s}. 
\end{Def}
Recall \cite[Definition 4.1.2]{AV_compactifying} that a \emph{twisted nodal} $n$\emph{-pointed curve} over $B$ is the data of three morphisms
$\Sigma_i\to \cC\to C\to B$
such that 
\begin{itemize}
    \item $\cC$ is a Deligne-Mumford stack, the map $\cC\to B$ is proper and \'{e}tale-locally over $B$ it is a nodal curve;
    \item $\Sigma_i\subset \cC$ are disjoint closed substacks contained in the smooth locus of $\cC$ such that $\Sigma_i\to C \to B$ are \'{e}tale gerbes;
    \item $\cC\to C$ is a coarse moduli space and is an isomorphism away from $\Sigma_i$ and the singular locus of $C\to B$.
\end{itemize}
From now on, we will say \emph{twisted ($n$-pointed) curve} to indicate a twisted nodal $n$-pointed curve.
\begin{Def}\label{def twisted map} 
Let $\cM$ be an algebraic stack with projective good moduli space $\cM\to M$, let $n\ge 0$ be a non-negative integer. A \emph{twisted map to} $\cM$ \emph{over a scheme} $B$ consists of a triplet $(\pi:\cC\to B, \phi:\cC\to \cM,\{\sigma_i : \Sigma_i \to \cC\})$ such that:
\begin{enumerate}
\item $(\pi:\cC\to B,\Sigma_i)$ is a twisted $n$-pointed curve over $B$;
\item if we denote by $C$ (resp. $S$) the coarse moduli space of $\cC$ (resp. $\cup \Sigma_i$), then for every $b\in B$ the pointed map $f_b:(C_b,S_b)\to M$ is quasi stable;
    \item the smooth points of $\cC_b$ that are not on the image of $\cup\sigma_i$ are schematic;
   \item if $\cD$ is an irreducible components of $\cC_b$ with coarse moduli space which is an $f_b$-destabilizing $\bP^1$, then for every $d$ the map $\cD\to\cM$ does not factor as $\cD\to \cB\bmu_d\to\cM$. 
    \end{enumerate}
\end{Def}

\begin{Oss}We will mainly deal with the case where $n=0$, namely the non-pointed case. The reason we introduce pointed quasi stable maps is because they arise naturally when taking the normalization of an unpointed quasi stable map.
\end{Oss}

We now comment briefly on the last point of \Cref{def twisted map}.
We begin by showing that if $\cM$ is a Deligne-Mumford stack, we recover the definition of twisted stable maps \`{a} la Abramovich-Vistoli. For this, we need the following two lemmas.
\begin{Lemma}\label{lemma C to p1 ramified at 2 points implies C=p1}
Let $f:C\to \bP^1$ be a generically \'etale morphism which is ramified only over $\{p,q\}\subseteq \bP^1$. Then $C\cong \bP^1$ and the map is $[a,b]\mapsto [a^n,b^n]$ for some $n$.
\end{Lemma}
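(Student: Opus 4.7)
The plan is to apply the Riemann--Hurwitz formula to pin down both the genus of $C$ and the local structure of $f$ at the two branch points. Write $n := \deg f$ and suppose the fibers $f^{-1}(p)$ and $f^{-1}(q)$ consist of $r$ and $s$ distinct points respectively. Since $f$ is étale away from $\{p,q\}$ and the ground field has characteristic zero, the ramification divisor $R$ is supported in these two fibers and has degree
\[\deg R = (n - r) + (n - s).\]
Riemann--Hurwitz then yields
\[2g(C) - 2 \;=\; -2n + (n-r) + (n-s) \;=\; -\,r - s.\]
Since $g(C) \ge 0$ and $r, s \ge 1$, this forces $r = s = 1$ and $g(C) = 0$. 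Hence $C \cong \bP^1$ and $f$ is totally ramified over both $p$ and $q$.

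Next, I would exploit the $\PGL_2$-action on source and target to arrange that $p = [0,1]$ and $q = [1,0]$, and that their unique preimages under $f$ are $[0,1]$ and $[1,0]$ respectively. In the affine coordinate $z = a/b$, the map $f$ is then a rational function on $\bP^1$ with a single zero of order $n$ at $z=0$ and a single pole of order $n$ at $z=\infty$, so $f(z) = c\,z^n$ for some $c \in k^\times$. A further rescaling of the source coordinate (legitimate since $k$ has characteristic zero and we may work over an algebraic closure) brings $f$ into the claimed form $[a,b] \mapsto [a^n, b^n]$.

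The argument is completely routine once Riemann--Hurwitz is invoked; there is no genuine obstacle. The only point of care is that the formula requires $C$ to be a smooth projective curve, which is the natural setting in which this lemma will be applied. If $C$ were only assumed integral, one would first replace it by its normalization $\tilde C$, run the argument to obtain $\tilde C \cong \bP^1$, and then observe that the finite birational morphism $\tilde C \to C$ is an isomorphism as soon as $C$ itself is normal.
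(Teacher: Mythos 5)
Your proof is correct and takes essentially the same route as the paper: both arguments are Riemann--Hurwitz in disguise. The paper packages it as the observation that $f^*\omega_{\bP^1}(p+q)=\omega_C(R')$ (with $R'$ the reduced preimage of $\{p,q\}$) has degree zero, which is literally the identity $2g(C)-2 + r + s = 0$ that you derive directly. One small omission: you tacitly assume $p\neq q$ when you split $\deg R = (n-r)+(n-s)$; the paper disposes of the case $p=q$ first, by noting that an \'etale cover of $\bP^1\smallsetminus\{p\}\simeq\bA^1$ is trivial. This degenerate case does not change the conclusion (the map is then an isomorphism, i.e.\ $n=1$), but since the lemma is invoked for a cover "ramified at at most two points," it is worth a sentence.
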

\begin{proof}
First observe that $p\neq q$. Indeed, since the fundamental group of $\bP^1$ without a point is trivial, if $p=q$ then $f$ must be an isomorphism. 

Then $p\neq q$ and consider $\omega_{\bP^1}(p+q)$. A local generator for this line bundle around $p$ is $\frac{dx}{x}$.
Locally around every point $p_i$ in the preimage of $p$, $f$ can be written as $x\mapsto x^{e_i}$ for some integer $e_i$; therefore, the pullback of $\frac{dx}{x}$ is $\frac{dx^{e_i}}{x^{e_i}}=e_i\frac{dx}{x}$: we deduce that $h^*\omega_{\bP^1}(p+q)=\omega_C(R)$ where $R$ is the ramification locus of $h$.
But $\omega_{\bP^1}(p+q)$ has degree zero, so also $h^*\omega_{\bP^1}(p+q)=\omega_C(R)$ has degree zero: as $C$ is a smooth connected curve, this implies that $C\cong \bP^1$, $R$ consists of exactly two points, and $f$ is totally ramified. Then $f$ is of the desired form.
\end{proof}

\begin{Lemma}\label{lemma stabilizers stacky destabilizing p1s}
Let $\cP^1$ be the root stack of $\bP^1$ at the points 0 and 1. Assume that there is a representable morphism $f:\cP^1\to \cB G$ for a finite group $G$. Then there is an integer $e$, a map $f':\cP^1\to \cB\bmu_e$ and an embedding $\iota:\cB\bmu_e\hookrightarrow \cB G$ such that $f=\iota \circ f'$.
\end{Lemma}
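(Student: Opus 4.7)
The plan is to analyze the $G$-torsor $\pi: P \to \cP^1$ classified by $f$. Representability of $f$ means $P$ is an algebraic space. I would pick a connected component $P_0 \subseteq P$ and let $H \subseteq G$ be the subgroup stabilizing $P_0$ set-theoretically. Since connected components are both open and closed, two $G$-translates of $P_0$ are either equal or disjoint, from which one gets that $P \cong G \times^H P_0$ as $G$-torsors and that $P_0 \to \cP^1$ is an $H$-torsor. Consequently $f$ factors as $\cP^1 \xrightarrow{f'} \cB H \to \cB G$, where $f'$ classifies $P_0$ and the second arrow is induced by the inclusion $\iota: H \hookrightarrow G$. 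It remains to identify $H$ with $\bmu_e$ for some $e$.

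Next, I would study the composition $P_0 \to \cP^1 \to \bP^1$. Since $\cP^1$ is smooth and $P_0 \to \cP^1$ is \'etale (being a torsor), $P_0$ is a smooth, connected, one-dimensional algebraic space, hence a smooth projective curve. The coarse moduli map $\cP^1 \to \bP^1$ is an isomorphism over $\bP^1\setminus\{0,1\}$, so the composition $P_0 \to \bP^1$ is a finite, generically \'etale morphism of smooth proper curves, ramified only over $\{0,1\}$. Invoking \Cref{lemma C to p1 ramified at 2 points implies C=p1} yields $P_0 \cong \bP^1$, with the map equal to $[a:b] \mapsto [a^n:b^n]$ for some $n$.

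Finally, I would identify $H$ with $\bmu_n$. The $H$-action on $P_0$ is faithful (the torsor action is free on the preimage of $\bP^1\setminus\{0,1\}$) and commutes with the map $P_0 \to \bP^1$, producing an injection $H \hookrightarrow \Aut_{\bP^1}(P_0) = \bmu_n$. Comparing degrees: $P_0 \to \bP^1$ has degree $n$, while multiplicativity of degrees along $P_0 \to \cP^1 \to \bP^1$ gives $|H| \cdot 1 = n$, so $|H| = n$ and thus $H = \bmu_n$. Setting $e = n$ gives the desired factorization, with $\iota: \cB\bmu_e \hookrightarrow \cB G$ induced by the injection $\bmu_e = H \hookrightarrow G$. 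The only delicate step is the reduction to a single connected component and the corresponding induction of structure group; once this is in place, \Cref{lemma C to p1 ramified at 2 points implies C=p1} does the heavy lifting and the rest is bookkeeping.
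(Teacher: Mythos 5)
Your proposal is correct and follows essentially the same route as the paper: reduce to a connected component $P_0$ of the torsor (inducing structure group to the stabilizer $H$), apply \Cref{lemma C to p1 ramified at 2 points implies C=p1} to the composition $P_0\to\cP^1\to\bP^1$, and identify $H$ with $\bmu_e$. The only difference is that where the paper invokes an unelaborated ``descent argument'' to conclude $H\cong\bmu_e$, you give a cleaner explicit justification via the injection $H\hookrightarrow\Aut_{\bP^1}(P_0)=\bmu_n$ together with a degree count, which is a welcome bit of added detail.
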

\begin{proof}
We use the following fact: given a $G$-torsor $P\to S$ and a connected component $P_0\subset P$, the classifying morphism $S\to \cB G$ factors as $S\to \cB H\to \cB G$, where $H$ is the subgroup of elements of $G$ fixing $P_0$. 

Indeed, if $P'\subset P$ is another connected component, there exists an element $g\in G$ such that $gP_0\subset P'$: it is immediate to check that this induces an isomorphism $P'\simeq P_0$, hence $P\simeq \coprod_{gH \in G/H} P_0$. In particular, there is a classifying morphism $S\to \cB H$ such that $\spec(k)\times_{\cB H} S\simeq P_0$. On the other hand, the pullback of the universal $G$-torsor $\spec(k)\to \cB G$ along $\cB H\to \cB G$ is isomorphic to $\coprod_{gH \in G/H} \spec(k)$, from which we deduce that the pullback of the universal $G$-torsor along $S\to \cB H\to \cB G$ is isomorphic to 
\[ \coprod_{gH\in G/H} (\spec(k))\times_{\cB H} S \simeq \coprod_{gH \in G/H} P_0. \]
We can then conclude that the two classifying maps $S\to \cB G$ and $S\to \cB H\to \cB G$ are isomorphic.

Let $C'\to \cP^1$ be the $G$-torsor induced by $\cP^1\to \cB G$, and let $C$ be a connected component of $C'$. It is smooth as it is \'etale over $\cP^1$ and it is proper as $\spec(k)\to \cB G$ is proper. Therefore $C$ is a curve. 

We can take the composition $h:C\to \cP^1\to \bP^1$: this is a cover of $\bP^1$ ramified at at most two points. But then from \Cref{lemma C to p1 ramified at 2 points implies C=p1}, the map $h$ must be totally ramified at these two points with ramification index $e$.
We deduce that $h$ is given by the two sections $X_0^e, (X_0-X_1)^e$, the covering $C\to\bP^1$ is cyclic or, in other terms, is a $\bmu_e$-torsor for some integer $e$. By a descent argument, the subgroup $H$ of elements in $G$ that fix $C$ must be isomorphic to $\bmu_e$.

We can now apply the fact mentioned at the beginning to conclude that the map $\cP^1\to \cB G$ factors through $\cB\bmu_e$.\end{proof}
\begin{Prop}
Let $\cM$ be a Deligne-Mumford stack with coarse moduli space $M$, and let $\phi:\cC\to \cM$ be a twisted map over $\spec(k)$, with $f:C\to M$ the induced map on the coarse spaces. Then $f$ is Kontsevich stable and $\phi$ is a twisted stable map in the sense of Abramovich-Vistoli.
\end{Prop}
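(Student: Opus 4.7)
The plan is to derive both claims by exploiting the cyclicity of the étale fundamental group of a stacky $\bP^1$, combined with condition (4) of \Cref{def twisted map}.

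I would first reduce Kontsevich stability of $f$ to ruling out destabilizing $\bP^{1}$s on which $f$ is constant. By the quasi-stability hypothesis, any destabilizing $\bP^{1}$ $D\subset C$ satisfies
\[
\deg(f^{*}L|_{D}) + (\text{number of special points on } D) = 2,
\]
and the only such $D$ that violates Kontsevich stability is one with $\deg(f^{*}L|_{D}) = 0$ and exactly two special points, i.e., $f$ contracts $D$. In the remaining cases $f|_{D}$ is non-constant, hence $D$ is already Kontsevich stable.

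Now suppose for contradiction that $\cD\subset\cC$ is a destabilizing $\bP^{1}$ with $f|_{\cD}$ constant, having image a point $m\in M$. Then $\phi|_{\cD}:\cD\to\cM$ factors through the residual gerbe of $m$, which takes the form $\cB G$ for a finite group $G$ since $\cM$ is Deligne--Mumford. I would then invoke \Cref{lemma stabilizers stacky destabilizing p1s} to obtain a factorization $\cD\to\cB\bmu_{e}\to\cB G\to\cM$ for some integer $e$, contradicting condition (4) of \Cref{def twisted map} and thereby completing the proof that $f$ is Kontsevich stable. The subtlety, which I view as the main obstacle, is that \Cref{lemma stabilizers stacky destabilizing p1s} is formulated under a representability assumption on $\cD\to\cB G$ that is not a priori available. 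A conceptual bypass is to note that $\cD$ is a root stack of $\bP^{1}$ at its (at most two) special points, of orders $e_{1}, e_{2}$, so its étale fundamental group $\bZ/\gcd(e_{1},e_{2})\bZ$ is cyclic; any $G$-torsor on $\cD$ is then classified by a homomorphism from this cyclic group into $G$, whose image is a cyclic subgroup $\bmu_{e}\subset G$, yielding the desired factorization with no representability hypothesis.

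Finally, to conclude that $(\cC,\phi)$ is a twisted stable map in the sense of Abramovich--Vistoli, I would verify representability of $\phi$, which by condition (3) of \Cref{def twisted map} only needs to be checked at stacky nodes and markings. A failure of injectivity of the stabilizer map at such a point $x$ would, by a similar local analysis, produce a factorization of $\phi$ through a classifying stack incompatible with the twisted structure; together with the Kontsevich stability established above and the given twisted curve datum, this yields the Abramovich--Vistoli axioms.
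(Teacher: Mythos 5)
Your argument is correct and follows the same structural strategy as the paper's proof: identify a contracted destabilizing $\bP^1$, denoted $\cD$, show that $\phi|_\cD$ factors through the residual gerbe $\cB G$ of its image point, and then through $\cB\bmu_e$, contradicting condition (4) of \Cref{def twisted map}. The genuine difference is in how you establish the $\cB\bmu_e$ factorization. The paper invokes \Cref{lemma stabilizers stacky destabilizing p1s}, which assumes representability of $\cD\to\cB G$ and is proved by studying the $G$-cover geometrically via \Cref{lemma C to p1 ramified at 2 points implies C=p1}. You rightly flag that representability hypothesis as a loose end, and your fundamental-group bypass is clean: since $\cD$ is a root stack of $\bP^1$ at at most two points of orders $e_1,e_2$, a van Kampen computation gives $\pi_1(\cD)\cong\bZ/\gcd(e_1,e_2)\bZ$, which is cyclic; a $G$-torsor on $\cD$ (with $G$ finite constant, as $k$ has characteristic zero) is given by a conjugacy class of homomorphisms $\pi_1(\cD)\to G$, whose image is cyclic, yielding the factorization with no representability assumption. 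This argument also handles the case of one or zero special points uniformly, since there $\pi_1(\cD)$ is trivial. You are also more precise than the paper about which destabilizing $\bP^1$s actually threaten Kontsevich stability: the paper's claim that ``$f|_D$ factors through a point'' holds only when $\deg(f^*L|_D)=0$, exactly the case your preliminary reduction isolates (the paper silently restricts to this case by starting from the contradiction hypothesis).

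The soft spot is your final paragraph on representability. \Cref{def twisted map} does not visibly force $\Aut_\cC(x)\to\Aut_\cM(\phi(x))$ to be injective at stacky nodes or markings, and the sketch you offer (that a failure would ``produce a factorization of $\phi$ through a classifying stack incompatible with the twisted structure'') is not substantiated. That said, the paper's own proof addresses only the Kontsevich-stability assertion and is entirely silent on representability and the other Abramovich--Vistoli axioms, so your proposal is at least as complete as the source and more explicit about where residual work lies.
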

\begin{proof}Assume by contradiction that it is not, let $\cD$ be an irreducible component of $\cC$ with coarse moduli space $D\cong \bP^1$, and assume that $D$ is a destabilizing $\bP^1$.  Then $f|_D:D\to M$ factors as $D\to \spec(k)\to M$, where $\spec(k)\to M$ is the inclusion of a closed point $p$.  By the universal property of the fiber product, $\cD\to \cM$ factors through $\spec(k)\times_M\cM \cong \cB G$ where $G$ is the stabilizer of the closed point $p$.  By \Cref{lemma stabilizers stacky destabilizing p1s},  the map $\phi|_\cD:\cD\to \cM$ factors via $\cD\to \cB\bmu_e\to \cB G\to \cM$, which is a contradiction.
\end{proof}

\subsubsection{Destabilizing components and multidegree}
An interesting case is when $\cM = [\spec(A)/\Gm]$ for $\spec(A)$ an affine $\Gm$-scheme of finite type over $k$. Infact $\cM$ is an algebraic stack and its good moduli space is projective if and only if it is a point. In this case, we can consider twisted maps to $[\spec(A)/\Gm]$.
Observe that the composition $\cC\to [\spec(A)/\Gm]\to \cB\Gm$ corresponds to a line bundle $L$ over $\cC$. The following proposition shows that condition (4) of \Cref{def twisted map} can then be formulated in terms of the degree of $L$ on the destabilizing components.
\begin{Prop}\label{prop degree 0 = factors through bmuk}
Let $(\pi:\cC\to B,\phi:\cC\to [\spec(A)/\Gm])$ be a pair satisfying the first three conditions of \Cref{def twisted map}. Then it satisfies condition (4) (hence it is a twisted map) if and only if for every geometric point $b\in B$ the line bundle $L_b$ on $\cC_b$ induced by the composition $\cC_b\to [\spec(A)/\Gm]\to \cB\Gm$ has non-zero degree on every destabilizing component.
\end{Prop}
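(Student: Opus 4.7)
The plan is to fix a geometric point $b \in B$ and a destabilizing irreducible component $\cD \subset \cC_b$ with coarse moduli $\bP^1$, and prove that $\cD \to \cM$ factors as $\cD \to \cB\bmu_d \to \cM$ for some $d$ if and only if $\deg(L_b|_\cD)=0$. The proposition then follows by ranging over all $b$ and all such $\cD$.

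The ``only if'' direction is straightforward: given a factorization $\cD \to \cB\bmu_d \to \cM$, composing with $\cM \to \cB\Gm$ exhibits $L_b|_\cD$ as the pullback of a line bundle on $\cB\bmu_d$. Since every line bundle on $\cB\bmu_d$ is $d$-torsion, so is $L_b|_\cD$, and in particular its degree is zero. Contrapositively, nonzero degree prevents any such factorization.

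For the converse, suppose $\deg(L_b|_\cD)=0$. First I would argue that $L_b|_\cD$ is torsion: by the degree convention, some positive tensor power $L_b|_\cD^{\otimes k}$ descends to a line bundle $M$ on the coarse moduli $\bP^1$ of $\cD$ with $\deg M = k\cdot \deg(L_b|_\cD) = 0$; since $\Pic^0(\bP^1)=0$, $M$ is trivial and so is its pullback $L_b|_\cD^{\otimes k}$. Let $N$ be the exact order of $L_b|_\cD$, so that the classifying map $\cD \to \cB\Gm$ factors through $\cB\bmu_N$; let $\pi\colon \cE \to \cD$ be the associated $\bmu_N$-torsor, on which $\pi^*L_b|_\cD$ is canonically trivial. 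Then $\cE$ is a proper, geometrically connected, tame Deligne-Mumford curve. Now consider the composition $\cE \to \cD \to \cM = [\spec(A)/\Gm]$: since the pulled-back $\Gm$-torsor on $\cE$ is trivial, this map is equivalent to a morphism $g\colon \cE \to \spec(A)$. Since $H^0(\cE,\cO_\cE) = k$ and $\spec(A)$ is affine, $g$ must be constant with image a point $p \in \spec(A)$. Descending $g$ $\bmu_N$-equivariantly along $\pi$ forces $p$ to be fixed by the subgroup $\bmu_N \subset \Gm$, and produces a factorization $\cD \cong [\cE/\bmu_N] \to \cB\bmu_N \to \cM$, where the second arrow is classified by the pair $(\bmu_N \hookrightarrow \Gm,\ p)$. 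This contradicts condition $(4)$ for $d=N$.

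The main obstacle I anticipate is the last step: carefully tracking the $\bmu_N$-equivariance so that the constant map $g$ descends to a genuine map $\cD \to \cB\bmu_N \to \cM$ whose composition with $\cM \to \cB\Gm$ recovers $L_b|_\cD$. In particular, one needs to identify the character $\bmu_N \to \Gm$ carried by the $\Gm$-torsor $\pi^*P$ with the inclusion $\bmu_N \hookrightarrow \Gm$ associated to the exact order of $L_b|_\cD$. The remaining ingredients (torsion of degree-zero line bundles on root-stack $\bP^1$'s, properness of $\bmu_N$-covers, constancy of maps from proper connected stacks to affine schemes) are standard.
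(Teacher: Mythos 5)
Your proof is correct and takes essentially the same route as the paper's: both directions rest on the same two ideas, namely that degree zero on a stacky $\bP^1$ forces $L_b|_\cD$ to be torsion, and that properness against affineness forces the resulting map to land in a single orbit with cyclic stabilizer. The only difference is one of bookkeeping — you pass to the $\bmu_N$-torsor $\cE\to\cD$, trivialize the $\Gm$-torsor there, and check $\bmu_N$-equivariance of the constant map $\cE\to\spec(A)$, whereas the paper pulls back to $\cB\bmu_e\times_{\cB\Gm}\cM\simeq[\spec(A)/\bmu_e]$ and uses constancy of the induced coarse moduli map $\bP^1\to\spec(A^{\bmu_e})$; these are two ways of packaging the same descent argument.
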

\begin{proof}
Suppose that for some geometric point $b\in B$ we have a destabilizing component $\cD_b\subset \cC_b$ such that the line bundle $L|_{\cD_b}$ has degree zero. Then the coarse moduli space of $\cD_b$ is isomorphic to $\bP^1$. As $\cD_b$ is a Deligne-Mumford stack, there exists an integer $e$ such that $L|_{\cD_b}^{\otimes e}$ comes from a line bundle on the coarse space $D_b\simeq\bP^1$. As the only line bundle on $\bP^1$ of degree zero is the trivial one, we deduce that $L|_{\cD_b}^{\otimes e}\simeq \cO_{\cD_b}$.

This implies that we have a factorization $\cD_b\to \cB\bmu_{e}\to \cB\Gm$, hence a map 
\[ \cD_b\longrightarrow \cB\bmu_e\times_{\cB\Gm}[\spec(A)/\Gm]\simeq [\spec(A)/\bmu_e]. \]
Observe now that the coarse space $D_b\simeq\bP^1$ is contracted to a point $q$ in the coarse space of $[\spec(A)/\bmu_e]$, because the latter is affine. This implies that $\cD_b\to [\spec(A)/\bmu_e] $ factors through some $\cB\bmu_{e'}$, where $\bmu_{e'}\subset \bmu_{e}$ is the automorphism group of the geometric point whose image in the coarse space $\spec(A^{\Gm})$ is $q$. We showed that condition (4) in \Cref{def twisted map} is not satisfied.

Viceversa, suppose that condition (4) is not satisfied: then there exists a factorization 
\[\cD_b \longrightarrow \cB\bmu_{e} \longrightarrow [\spec(A)/\Gm] \longrightarrow \cB\Gm. \]
This implies that the line bundle $L|_{\cD_b}$ comes from $\cB\bmu_{e}$, i.e. is a torsion line bundle.
\end{proof}

\subsection{Why twisted maps with quasi stable coarse space}
In this section we report two examples showing that if one does not allow neither twisted curves nor destabilizing stacky $\cP^1$s, then there is no hope to find a moduli space satisfying the existence part of the valuative criterion for properness.

\subsubsection{Stacky nodes are necessary}\label{subsection stacky nodes are necessary}
This example already appeared in the work of Abramovich and Vistoli. Consider the two homogeneous polynomials
$A\in \oH^0(\bP^1, \cO(4)) $ and $B \in \oH^0(\bP^1, \cO(6))$ such that $4A^3 + 27B^2$ has 12 distinct roots. This data corresponds to a Weierstrass fibration $(X,S)\to \bP^1$ with 12 nodal singular fibers, so it corresponds to
a map $\bP^1\to \overline{\cM}_{1,1}$.
We can now multiply all the terms in $A$ of degree greater than 2 and the terms in $B$ of degree greater than 3 by a parameter $t$. This gives two polynomials in $\oH^0(\bP^1_{k(t)}, \cO(4))$ and $\oH^0(\bP^1_{k(t)}, \cO(6))$ respectively. The corresponding Weierstrass fibration $(X_{k(t)},S_{k(t)})\to \bP^1_{k(t)}$ still corresponds to a map $\bP^1_{k(t)}\to \overline{\cM}_{1,1}$ which induces a map $\bP^1_{k(t)}\to \overline{\cM}_{1,1}\to \overline{\operatorname{M}}_{1,1}$. We can take the limit of this map to $\spec(k[t]_{(t)})$. It is
easy to see, from how we multiplied the coefficients of $A$ and $B$ by $t$, that if we denote by $C\to \overline{\operatorname{M}}_{1,1}$ the limiting map, the special
fiber of $C$ will have a component $D$ such that $D\to \overline{\operatorname{M}}_{1,1}$ has degree 6. Even if we replace $C$ with another scheme $C'\to C$ birational to $C$ (blowing up the special fiber of $C\to \spec(R)$), the proper transform of $D$ in $C'$ will still map to $\overline{\operatorname{M}}_{1,1}$ with a map of degree 6. This cannot lift to $\overline{\cM}_{1,1}$: any map from a curve to $\overline{\cM}_{1,1}$ has degree divisible by 12.
\subsubsection{Destabilizing $\bP^1$s are necessary}\label{subsubsection P1s are necessary} Let $R$ be a DVR, and consider two genus 2 curves with two marked points, $\{(C_i;p_i,q_i)\}_{i=1}^2$, and consider the curve obtained by gluing together $p_1$ with $p_2$ and $q_1$ with $q_2$. Let $\cC=C\times\spec(R)$, and let $\eta$ be the generic point of $\spec(R)$. 
Consider the trivial line bundle over the two components of $\cC_\eta$, glued via the multiplication by a constant on the $p_i$s and by the multiplication by the uniformizer $\pi$ of $\spec(R)$ on the $q_i$.
This corresponds to a map $f:\cC_\eta \to \cB\Gm$. One can check that:
\begin{enumerate}
    \item the only twisted curve that one can produce as a limit of $\cC_\eta$ over $\spec(R)$ is the scheme $\cC$, and
    \item the map $f$ does not extend to $\cC\to \cB\Gm$ as the line bundle does not extend.
\end{enumerate}
In particular, if one does not allow destabilizing $\cP^1$s, one cannot fill in the limit with just twisted curves.

\section{Extending maps from deformations of curves I}\label{sec:fix}
Let $\cM=[X/G]$ be a quotient stack by a linear algebraic group $G$ having a projective good moduli space $M$. Let  $\qB$ be a DVR over $\mathbb{C}$ with generic point $\eta$, and let $\pi:C_\eta\to \eta$ be a smooth curve. Suppose to have a map $\phi_\eta:C_\eta\to \cM$ which induces a map $f_\eta:C_\eta\to M$. Kontsevich' theorem tells us that there exists a unique family of semistable curves $C\to \spec(\qB)$ together with a map $f:C\to M$  that extends $f_\eta$. Can we lift $f$ to a map $\phi:C\to \cM$ that extends $\phi_\eta$? A first step towards solving this problem would be lifting $f$ {at least at the generic points of the special fiber of }$C\to \spec(\qB)$. In this section we show that this is indeed the case, up to taking ramified covers of the base (see \Cref{teorema fix}).

In what follows by an extension of DVRs we mean a map of DVRs $\qR\to \qR'$ which is generically finite. 
It is useful to introduce the following definition:
\begin{Def}\label{def comes from family of curves}
 We say that a morphism of DVRs $f:\qB\to \qR$ \textit{comes from a family of curves} if there is a flat proper morphism $F:C\to \spec(\qB)$ whose generic fiber is a smooth connected curve, whose special fiber connected and reduced, and such that
 \begin{enumerate}
     \item there is a generic point $\mu$ of the closed fiber of $F$ and
     \item there is an isomorphism $\cO_{C,\mu}\to \qR$ such that $f$ factors as $\qB\to \cO_{C,\mu}\to \qR$.
 \end{enumerate} 
\end{Def}
\begin{Lemma}\label{lemma tensor product thing is a DVR}
Assume that $f:\qB\to \qR$ comes from a family of curves, and the residue field of $\qB$ is algebraically closed. Then:
\begin{enumerate}
    \item if we denote by $\pi$ the uniformizer of $\qB$, the element $f(\pi)$ is a uniformizer for $\qR$, and
    \item if $\qB\to \qB'$ is an extension of $\qB$, the tensor product $\qR\otimes_\qB \qB'$ is a DVR.
\end{enumerate}
\end{Lemma}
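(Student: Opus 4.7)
For part (1), the special fiber of $F:C\to\spec(\qB)$ is cut out by $\pi\in\qB$, and is reduced by hypothesis. Since $\mu$ is a generic point of this one-dimensional scheme, its local ring in the special fiber is the residue field $\kappa(\mu)$, so $\cB/f(\pi)\cB \cong \kappa(\mu)$. Thus $f(\pi)$ generates the maximal ideal of the local ring $\cB$ and is therefore a uniformizer. The same argument shows the stronger statement that whenever $F$ is a family of curves in the sense of Definition \ref{def comes from family of curves}, the local ring at a generic point of the (reduced) special fiber is automatically a DVR, with uniformizer given by the image of the base uniformizer.

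For part (2), I plan to realize $\cB\otimes_\qB \qB'$ as the local ring of the base-changed family at a suitable point, then invoke the strengthened version of (1) above. Set $C':= C\times_\qB \spec(\qB')$ with projection $F':C'\to\spec(\qB')$. I first verify that $F'$ still satisfies Definition \ref{def comes from family of curves}: flatness and properness pass to base change; the special fiber equals $C_0\times_k k'$ for $k'$ the residue field of $\qB'$, and this coincides with $k$ because $k$ is algebraically closed and $\qB\to\qB'$ is a generically finite extension of DVRs; hence the special fiber is just $C_0$, connected and reduced. The generic fiber $C_\eta\times_K K'$ is smooth, and remains connected thanks to the geometric connectedness of $C_\eta$ over $K$, which follows from $H^0(C,\cO_C)=\qB$ (by Stein factorization applied to $F$, whose fibers are connected).

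Second, I would show that $\mu$ has a unique preimage $\mu'$ in $C'$. This preimage is $\spec(\kappa(\mu)\otimes_\qB \qB')$; writing $\pi=u\cdot(\pi')^e$ in $\qB'$ for a unit $u$ and the ramification index $e$, this rewrites as $\spec(\kappa(\mu)\otimes_k \qB'/(\pi')^e\qB')$. Now $\qB'/(\pi')^e\qB'$ is Artinian local with residue field $k$, and tensoring over $k$ with the field $\kappa(\mu)$ preserves this property, so the preimage is a single point $\mu'$, necessarily a generic point of the special fiber of $F'$. Since $\mu$ has a unique preimage, the canonical map $\cB\otimes_\qB \qB' = \cO_{C,\mu}\otimes_\qB \qB' \to \cO_{C',\mu'}$ is an isomorphism (both sides are local rings with the same maximal ideal and residue field, and the inclusion is an equality by the uniqueness of the maximal ideal).

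Applying the strengthened version of (1) to $F'$ and $\mu'$ then shows $\cB\otimes_\qB \qB'$ is a DVR with uniformizer $\pi'$. The most delicate step will be the verification that $\mu$ admits a unique preimage $\mu'$, which depends crucially on the algebraic closedness of the residue field of $\qB$ to force $k'=k$.
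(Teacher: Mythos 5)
Your overall strategy matches the paper's: both proofs reduce the assertion to identifying $\qR\otimes_\qB\qB'$ with the local ring $\cO_{C',\mu'}$ of the base-changed family $C'=C\times_{\spec\qB}\spec\qB'$ at the unique point $\mu'$ over $\mu$, and then invoke reducedness of the special fiber. Your treatment of part (1), and your preliminary observations that $F'$ again satisfies \Cref{def comes from family of curves} and that $\mu$ has a unique preimage $\mu'$, are all correct.

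However, there is a genuine gap in your justification that the canonical map $\cO_{C,\mu}\otimes_\qB\qB'\to\cO_{C',\mu'}$ is an isomorphism. This map is a localization, so it is an isomorphism precisely when $A:=\cO_{C,\mu}\otimes_\qB\qB'$ is \emph{already local} with maximal ideal the one corresponding to $\mu'$. You appeal to "the uniqueness of the maximal ideal", citing the unique preimage of $\mu$. But the unique preimage only controls the fiber of $\spec(A)\to\spec(\cO_{C,\mu})$ over the \emph{closed} point; it says nothing about the fiber over the generic point, which is $\spec\bigl(K(C)\otimes_{K(\qB)}K(\qB')\bigr)$. A priori this could be a product of several fields, producing several minimal primes of $A$, and without further input one cannot rule out that some minimal prime is itself maximal (i.e.\ an isolated point of $\spec A$), in which case $A$ would fail to be local. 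The missing input is precisely the fact you established earlier in passing: $C_\eta$ is geometrically connected over $K(\qB)$ (from $F_*\cO_C=\qB$), hence $C'_\eta$ is irreducible, hence $K(C)\otimes_{K(\qB)}K(\qB')$ is a field. Once you know that, $A$ is a domain with $A[\pi'^{-1}]$ a field and $A/\pi'A=\kappa(\mu)$ a field, so $\spec(A)$ consists of exactly two primes $(0)\subsetneq(\pi')$ and $A$ is local, completing the identification $A\cong\cO_{C',\mu'}$. You should add this bridge explicitly; as written, the parenthetical justification does not cover it. It is worth noting that the paper's own proof avoids this issue entirely by a cofinality-of-colimits argument (comparing the system of opens $\pi^{-1}(U)$, $U\ni\mu$, with the system of all opens $V\ni\mu'$), which never requires one to know ahead of time that $\cO_{C,\mu}\otimes_\qB\qB'$ is local.
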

\begin{proof} Let $F:C\to \spec(\qB)$ be a family of curves as in \Cref{def comes from family of curves}. 
Point (1) follows as $\qR/f(\pi)\qR\cong \qR\otimes_\qB \qB/\pi$ is the local ring at a generic point of the special fiber. As the special fiber is reduced, $\qR/f(\pi)\qR$ is a field, so $f(\pi)$ is a uniformizer.

As for point (2), let $C':= C\times_{\spec(\qB)}\spec(\qB')$, let $\pi:C'\to C$ and $F':C'\to \spec(\qB')$ be the two projections. We have an isomorphism $\qR\cong \varinjlim_{\mu \in U} \cO_{C}(U).$
As the tensor product commutes with colimits, $$\qR\otimes_\qB \qB' \cong \varinjlim_{\mu \in U} \cO_{C}(U)\otimes_\qB \qB' \cong \varinjlim_{\mu \in U} \cO_{C'}(\pi^{-1}(U)).$$
As the residue field of $\qB$ is algebraically closed, the central fibers of $F$ and $F'$ are isomorphic, so we denote by $\mu'$ the (unique) point over $\mu$ in $C'$. For every open subset $V$ in $C'$ containing $\mu'$, let $D'$ be its complement. Then the irreducible components of $\pi(D')$ are either
locally closed subschemes generically finite over the generic point of $\spec(\qB)$ or closed subschemes of the special fiber that do not contain $\mu$. Therefore the closure $D:= \overline{\pi(D')}$ does not contain $\mu$ and, if we denote by $U$ its complement, then $\pi^{-1}(U)\subseteq V$. This implies that 
$$\varinjlim_{\mu \in U} \cO_{C'}(\pi^{-1}(U)) \cong \varinjlim_{\mu '  \in V} \cO_{C'}(V) \cong \cO_{C',\mu'}$$
and the latter is a DVR, as $F'$ has reduced closed fiber. 
\end{proof}
\begin{Cor}\label{corollary R otimes B K(B) is a field}
With the assumption of \Cref{lemma tensor product thing is a DVR}, if $F$ is the algebraic closure of $K(\qB)$, the fraction field of $\qB$, then $F\otimes_\qB \qR$ is a field. 
\end{Cor}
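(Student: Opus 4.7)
My plan is to reduce the corollary to \Cref{lemma tensor product thing is a DVR} via a filtered-colimit argument on finite subextensions of $F$.

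Writing $F = \varinjlim_L L$ over the directed system of finite (hence separable, as $\operatorname{char}k=0$) subextensions $K(\qB)\subseteq L\subseteq F$, tensor products commute with filtered colimits, so
\[ F\otimes_\qB \qR \;=\; \varinjlim_L (L\otimes_\qB \qR). \]
A filtered colimit of fields is a field, so I would reduce to showing that $L\otimes_\qB \qR$ is a field for every such finite $L$.

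For a fixed $L$, I would pick a DVR $\qB'\supseteq \qB$ with fraction field $L$ (for instance, the localization of the integral closure of $\qB$ in $L$ at one of its maximal ideals), note that the residue field of $\qB'$ equals that of $\qB$ because the residue extension is finite and $\qB/\km_\qB$ is algebraically closed, and observe that the base-change of the family of curves $C\to\spec(\qB)$ representing $\qR$ to $\spec(\qB')$ still satisfies \Cref{def comes from family of curves}: the special fiber is unchanged and the generic fiber remains smooth and connected, the latter because $C_\eta$ is geometrically connected over $K(\qB)$ (as $C_s$ is connected and reduced over the algebraically closed $\qB/\km_\qB$). Thus the induced map $\qB'\to\qR\otimes_\qB\qB'$ also comes from a family of curves, and I may apply both parts of \Cref{lemma tensor product thing is a DVR} to it.

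Part (2) yields that $\qR\otimes_\qB\qB'$ is a DVR, and part (1) yields that a uniformizer $\pi'$ of $\qB'$ is sent to a uniformizer of $\qR\otimes_\qB\qB'$. Since $L = \qB'[1/\pi']$, this gives
\[ L\otimes_\qB \qR \;=\; L\otimes_{\qB'}(\qR\otimes_\qB\qB') \;=\; (\qR\otimes_\qB\qB')[1/\pi'] \;=\; \operatorname{Frac}(\qR\otimes_\qB\qB'), \]
which is a field. The only step that requires real attention is verifying that the base-changed family still fits \Cref{def comes from family of curves} so that \Cref{lemma tensor product thing is a DVR} can be reapplied; as sketched, this is handled cleanly by the hypothesis that the residue field of $\qB$ is algebraically closed, which forces both that $\qB'$ shares its residue field with $\qB$ and that $C_\eta$ is geometrically connected.
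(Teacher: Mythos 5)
Your proof is correct and follows the same route as the paper's: reduce to a finite subextension $L=K(\qB')$ and invoke \Cref{lemma tensor product thing is a DVR} to identify $L\otimes_\qB\qR$ with the fraction field of the DVR $\qR\otimes_\qB\qB'$. The one place where you do more work than is strictly necessary is in re-verifying that $\qB'\to\qR\otimes_\qB\qB'$ ``comes from a family of curves'' in order to re-apply part (1) of the lemma. Note that to verify this you would also have to check condition (2) of \Cref{def comes from family of curves}, namely that $\qR\otimes_\qB\qB'$ is literally $\cO_{C',\mu'}$ for the base-changed family -- a fact established only in the \emph{proof} (not the statement) of part (2) of the lemma -- so as written your appeal to part (1) is slightly circular. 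This detour can be avoided: once part (2) tells you $\qR\otimes_\qB\qB'$ is a DVR, the localization $(\qR\otimes_\qB\qB')[1/\pi']$ is its fraction field as soon as $\pi'$ is a nonzero non-unit, and this is immediate since $(\pi')^e$ equals $\pi_\qB$ up to a unit of $\qB'$ and $\pi_\qB$ is a non-unit of $\qR$ (part (1) applied to $\qB\to\qR$) hence of $\qR\otimes_\qB\qB'$ by faithful flatness of $\qR\to\qR\otimes_\qB\qB'$. The paper's proof is exactly this shorter observation, packaged as ``every nonzero element lands in some $K(\qB')\otimes_{K(\qB)}K(\qR)$, which is a field''; your filtered-colimit framing is a harmless cosmetic variant of the same idea.
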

\begin{proof}
It suffices to check that every non-zero element has an inverse. But a non-zero element of $F\otimes_\qB \qR$ can be written as a non-zero element $\alpha \in K(\qB')\otimes_{K(\qB)}K(\qR)$ for $\qB\to \qB'$ an extension of DVRs. So from \Cref{lemma tensor product thing is a DVR}, as $K(\qB')\otimes_{K(\qB)}K(\qR)$ is a field, $\alpha$ has an inverse.
\end{proof}
\begin{Notation}
 Assume that $\qB\to \qR$ is a morphism of DVRs that comes from a family of curves. An \textit{extension of $\qR$ along $\qB$} is the extension of DVRs $\qR\to \qR\otimes_\qB \qB'$.
 If we want to emphasize $\qB'$, we will say that $F'$ is an \textit{extension of $\qR$ along $\qB\to \qB'$}.
\end{Notation}
\begin{Oss}\label{rmk iso on fraction fields for extensions along B}
If $\qR'$ is an extension of $\qR$ along $\qB\to \qB'$ and $\qB\to \qB'$ is an isomorphism on residue fields, then also $\qR\to \qR'$ is an isomorphism on residue fields. 
\end{Oss}
We now recall two results that we will use later 
\begin{Oss}\label{remark unramified extension if the fiber of residue field is a field}
If $g:\qR\to \widetilde{\qR}$ is an extension of DVRs and $k_\qR$ is the residue field of $\qR$, then if $\widetilde{\qR}\otimes_\qR k_\qR$ is a field, $g$ is unramified.
\end{Oss}
\begin{Teo}[\cite{Galois_cohom}*{Chapter 3, Section 2.4, Theorem 3}]\label{theorem homogeneous space has a point}Assume that $L$ is the field of fractions of a smooth curve over an algebraically closed field, and $\cO\to \spec(L)$ is a homogeneous space for a linear group $G$. Then $\cO$ has an $L$-point.
\end{Teo}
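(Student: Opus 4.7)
The plan is to follow the strategy of Serre in \emph{Cohomologie Galoisienne}, which deduces this as an application of Steinberg's theorem on the vanishing of nonabelian $H^1$. The crucial input is Tsen's theorem: since $L$ is the function field of a smooth curve over an algebraically closed field, $L$ is a $C_1$-field, and in particular its cohomological dimension is at most $1$.

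First, I would fix a geometric point $\bar x\in \cO(\bar L)$ with stabilizer $H_{\bar x}\subseteq G_{\bar L}$, so that the orbit map $g\mapsto g\cdot\bar x$ identifies $\cO_{\bar L}$ with $G_{\bar L}/H_{\bar x}$. The obstruction to $\bar x$ descending to an $L$-point of $\cO$ is then a class in the pointed set $H^1(L,G)$, obtained via the fibration $G\to \cO$ (up to the usual care about what a ``boundary map'' means in the non-abelian setting). The goal is to show this class is trivial.

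Next, I would invoke Steinberg's theorem: for any connected linear algebraic $L$-group $G^0$, one has $H^1(L,G^0)=1$. This is the deep input, proven by Borel--Springer via the structure theory of linear algebraic groups (reduction to tori and to simply connected semisimple groups) combined with the $C_1$-property in the form of Tsen's existence theorem for zeros of forms of small degree. When $G$ is not connected, one separately handles the finite component group $\pi_0(G)$ and uses the exact sequence of pointed sets associated with $1\to G^0\to G\to \pi_0(G)\to 1$ to conclude that the obstruction in $H^1(L,G)$ vanishes.

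The hard part will be carrying out the first reduction carefully: since $H_{\bar x}$ need not be normal in $G$, one cannot use a genuine long exact sequence of groups, and one must instead work with the twisting formalism for homogeneous spaces (as in Serre, Chapter I.5) to legitimately move the obstruction class into $H^1(L,G)$. In practice, this is precisely why we quote Serre's theorem directly rather than reprove it here.
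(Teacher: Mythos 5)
The paper does not prove this theorem; it is quoted verbatim from Serre's \emph{Cohomologie Galoisienne}, so the relevant comparison is with Serre's own argument, not with anything in the paper. Your sketch correctly identifies the two essential inputs — Tsen's theorem giving $\operatorname{cd}(L)\le 1$, and the Springer/Steinberg vanishing of $H^1(L,G^0)$ for connected linear $G^0$ — and correctly flags the genuine technical subtlety, namely that the stabilizer $H_{\bar x}$ is not normal in $G$, so the obstruction does not sit in a long exact sequence and one must appeal to the twisting formalism (or, in Serre's actual argument, to a Springer-type non-abelian $H^2$ construction). Up to that point your outline is faithful.

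The step that is actually wrong is the claimed reduction for non-connected $G$. From the exact sequence of pointed sets for $1\to G^0\to G\to \pi_0(G)\to 1$ you cannot conclude that the relevant class in $H^1(L,G)$ dies, because $H^1(L,\pi_0(G))$ need not be trivial for a field of cohomological dimension $1$. Concretely, for $L=\bC(t)$ one has $H^1(L,\bZ/2)\cong L^\times/(L^\times)^2\neq 1$, and a non-square $a\in L$ produces a nontrivial $\bZ/2$-torsor, which is a homogeneous space under the finite (hence disconnected) group $\bZ/2$ with no $L$-point. So the statement is simply false for disconnected $G$, and Serre's Theorem 3 in Chapter III, \S2.4 is stated for \emph{connected} linear groups. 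The paper's formulation also omits the connectedness hypothesis, but this is harmless in context since the result is only invoked with $G=\GL_n$, which is connected; you should not, however, try to "prove" the disconnected case, since there is none.
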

In the proof of \Cref{teorema fix} we will use the following lemma, which we isolate from the rest of the proof.
\begin{Lemma}[``A curve through the $k_\qR$-point"]\label{lemma a curve through the pt}
Assume that $\qR$ is a DVR with residue field $k_\qR$, and $\spec(A)\to \spec(\qR)$ is a morphism. Assume that $p\in \spec(A)$ is a point that
\begin{enumerate}
    \item is in the special fiber of $f$, 
    \item is smooth for $f$, and
    \item the inclusion $k_\qR\to k(p)$ is surjective.
\end{enumerate} 
Then there is a DVR $\widetilde{\qR}$ with a morphism $\spec(\widetilde{\qR})\to \spec(A)$ such that the composition $\spec(\widetilde{\qR})\to \spec(A)\to \spec(\qR)$ is surjective, unramified and the extension of residue fields $k_\qR\to k_{\widetilde{\qR}}$ is an isomorphism.
\end{Lemma}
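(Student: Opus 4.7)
My plan is to cut out a formal curve inside $\spec(A)$ that passes through $p$ transversely to the special fiber, by choosing an appropriate regular system of parameters in $\cO_{A,p}$.

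First I would use smoothness of $f$ at $p$ to analyze the local ring $\cO_{A,p}$. Let $d$ be the relative dimension of $f$ at $p$; since $f$ is smooth at $p$, the ring $\cO_{A,p}$ is regular of Krull dimension $d+1$ and flat over $\qR$. Let $\pi$ be a uniformizer of $\qR$. By flatness $\pi$ is a non-zero-divisor in $\cO_{A,p}$, and $\cO_{A,p}/\pi$ is the local ring at $p$ of the special fiber of $f$, which has dimension $d$. In particular $\pi \notin \km_p^2$, so $\pi$ is part of a regular system of parameters. I would then pick $x_1,\dots,x_d \in \km_p$ so that $\pi, x_1,\dots,x_d$ form a regular system of parameters of $\cO_{A,p}$.

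Next I would set $\widetilde{\qR} := \cO_{A,p}/(x_1,\dots,x_d)$. Since we are quotienting a regular local ring of dimension $d+1$ by $d$ elements extending to a regular system of parameters, $\widetilde{\qR}$ is a regular local ring of dimension one, hence a DVR, with uniformizer the image of $\pi$ and residue field $k(p)$. By hypothesis (3), the natural inclusion $k_\qR \hookrightarrow k(p)$ (injective because $\qR \to \cO_{A,p}$ is a local map and $p$ sits in the special fiber) is surjective, so $k_{\widetilde{\qR}} = k_\qR$.

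Finally, the composition $A \to \cO_{A,p} \twoheadrightarrow \widetilde{\qR}$ yields the sought morphism $\spec(\widetilde{\qR}) \to \spec(A)$. Its composition with $f$ sends the closed point of $\spec(\widetilde{\qR})$ to the closed point of $\spec(\qR)$ (matching residue fields) and the generic point to the generic point of $\spec(\qR)$ (since $\pi$ is invertible in the fraction field of $\widetilde{\qR}$), so it is surjective. It is unramified because $\pi$ remains a uniformizer, and by construction the residue field extension $k_\qR \to k_{\widetilde{\qR}}$ is an isomorphism. The only genuinely nontrivial step is the claim that $\pi$ can be extended to a regular system of parameters; I expect no real obstacle beyond this local algebra verification, which is a direct consequence of flatness together with the equality $\dim(\cO_{A,p}/\pi) = d = \dim \cO_{A,p} - 1$.
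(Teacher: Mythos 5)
Your proof is essentially the same as the paper's: both extend the uniformizer $\pi$ to a regular system of parameters of $\cO_{A,p}$ and cut out a transversal curve by the remaining parameters, so that its local ring $\widetilde{\qR}$ at $p$ is a DVR with uniformizer $\pi$ and residue field $k(p)=k_\qR$. The paper phrases this on the scheme level by choosing $f_1,\dots,f_n$ with $(f_1,\dots,f_n,\pi)=\km_p$ and basis images in $\km_p/\km_p^2$, then sets $C=V(f_1,\dots,f_n)$; your quotient $\cO_{A,p}/(x_1,\dots,x_d)$ is the local ring of that $C$ at $p$.

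One justification should be tightened, however. You claim in the final sentence that $\pi\notin\km_p^2$ is ``a direct consequence of flatness together with the equality $\dim(\cO_{A,p}/\pi)=d=\dim\cO_{A,p}-1$.'' That implication is false in general: in $k[[x,y]]$ the element $\pi=x^2$ is a non-zero-divisor and the quotient has dimension one, yet $\pi\in\km^2$. What you actually need is that the special fiber of $f$ is \emph{smooth}, hence $\cO_{A,p}/\pi$ is a \emph{regular} local ring of dimension $d$, which forces $\dim_{k(p)}\km_p/(\km_p^2+(\pi))=d<d+1$ and hence $\pi\notin\km_p^2$. This is precisely the observation the paper makes (``the image of $\pi$ in the tangent space of $\spec(A)$ at $p$ is not $0$''). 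You have the smoothness hypothesis available, so the fix is merely a matter of invoking regularity of the quotient rather than a bare dimension count; the rest of your argument is correct.
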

\begin{proof}Let $\pi$ be a uniformizer for $\qR$, and let $\km_p$ be the maximal ideal of $\spec(A)$ corresponding to $p$.
Since $p$ is in the smooth locus of $f$, the image of $\pi$ in the tangent space of $\spec(A)$ at $p$ is not 0. In particular, up to replacing $\spec(A)$ with a neighbourhood of $p$, we can find $n$ elements $f_1,...,f_n$ such that the ideal sheaf corresponding to $p$ is $(f_1,...,f_n,\pi)$, and the images of $f_1,...,f_n,\pi$ in $\km_p/\km^2_p$ are a basis for $\km_p/\km^2_p$. 

Consider then $C$, the closed 
subscheme given by $(f_1,...,f_n)$. Since $f_1,...,f_n$ are linearly independent in $\km_p/\km^2_p$, the closed subscheme $C$ is smooth at $p$ (from the Jacobian criterion) and has dimension one at $p$. In particular, its local ring $\widetilde{\qR}$ at $p$ is a DVR with residue field $k(p)$. As $\pi$ generates the maximal ideal of $p$ in $C$, the map $\spec(\widetilde{\qR})\to \spec(\qR)$ is surjective. It is also unramified from \Cref{remark unramified extension if the fiber of residue field is a field}. 
\end{proof}

\begin{Teo}\label{teorema fix}
Let $\spec(\qB)$ be a DVR whose residue field is algebraically closed and of characteristic 0. Let $\spec(\qR)\to \spec(\qB)$ be a morphism of DVRs that comes from a family of curves. 
Assume that:
\begin{enumerate}
    \item[$\operatorname{(i)}$] there is a ring $A$ of finite type over $\qR$ with a $\GL_n$ action,
    \item[$\operatorname{(ii)}$]  there is a universally closed morphism $[\spec(A)/\GL_n]\to \spec(\qR)$, and
    \item[$\operatorname{(iii)}$] there is a section $\phi:\eta\to \spec(A)_\eta$, where $\eta$ is the generic point of $\spec(\qR)$.
\end{enumerate}
Then, up to replacing $\qB$ with an extension $\qB\to \qB'$, $\qR$ with $\qR'=\qR\otimes_\qB \qB'$ and $\spec(A)$ with $\spec(A\otimes_\qR \qR')$, we can extend the induced section $\widetilde{\phi}:\eta\to [\spec(A)_\eta/\GL_n]$ to a section $\spec(\qR)\to [\spec(A)/\GL_n]$.
\end{Teo}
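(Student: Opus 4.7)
The plan combines three tools: the structure of $[\spec(A)/\GL_n]$ via its good moduli space, Serre's theorem on rational points of homogeneous spaces (\Cref{theorem homogeneous space has a point}), and the local curve construction of \Cref{lemma a curve through the pt}. First, I would reduce to the situation where the good moduli space of $[\spec(A)/\GL_n]$ is $\spec(\qR)$ itself. Since $\GL_n$ is reductive, the stack admits an affine good moduli space $Q = \spec(A^{\GL_n})$, and the induced morphism $Q \to \spec(\qR)$ is universally closed (as a surjective quotient of the universally closed morphism in the hypothesis), affine, and of finite type, hence finite. The composition $\eta \to [\spec(A)/\GL_n] \to Q$ therefore extends uniquely to a section $\sigma_Q \colon \spec(\qR) \to Q$ by the valuative criterion; pulling $\spec(A)$ back along $\sigma_Q$ and relabeling, I may assume $A^{\GL_n} = \qR$. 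In this reduced setting the closed fiber $\spec(A)_0$ contains a unique closed $\GL_n$-orbit $O_q$, which is a $\GL_n$-homogeneous space defined over $k_\qR$.

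Next, since $\qR = \cO_{C,\mu}$ at a generic point $\mu$ of the special fiber of a family of curves $C \to \spec(\qB)$ and the residue field of $\qB$ is algebraically closed of characteristic zero, the residue field $k_\qR$ is the function field of a smooth curve over an algebraically closed field. By \Cref{theorem homogeneous space has a point}, $O_q$ admits a $k_\qR$-rational point $p$. I would then apply \Cref{lemma a curve through the pt} to $p \in \spec(A)$ over $\spec(\qR)$, producing a DVR $\widetilde{\qR}$ with a morphism $\spec(\widetilde{\qR}) \to \spec(A)$ whose composition with $\spec(A) \to \spec(\qR)$ is unramified and induces an isomorphism on residue fields. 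Such a dominant morphism of DVRs is flat and étale at the closed point, hence a local isomorphism there; since $\spec(\qR)$ has no proper open neighborhood of its closed point, this forces $\widetilde{\qR} \cong \qR$, yielding a section $\spec(\qR) \to \spec(A)$. Composing with $\spec(A) \to [\spec(A)/\GL_n]$ gives the desired extension, and matching with $\widetilde{\phi}$ on the generic fiber is automatic via the triviality of $\GL_n$-torsors on the DVR $\spec(\qR)$.

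The main expected obstacle is verifying the smoothness hypothesis of \Cref{lemma a curve through the pt} at $p$: the ambient scheme $\spec(A)$ need not be smooth over $\spec(\qR)$ along the closed orbit $O_q$. I would address this by exploiting the $\GL_n$-invariance of the smooth locus of $\spec(A) \to \spec(\qR)$ (so that smoothness at any single point of $O_q$ propagates over the whole orbit), combined with generic smoothness in characteristic zero applied to an equivariant slice or modification. This is also where the base change $\qB \to \qB'$ in the theorem's statement plays its role: after replacing $\qR$ by $\qR' = \qR \otimes_\qB \qB'$ for a suitable finite extension $\qB \to \qB'$ (which by \Cref{rmk iso on fraction fields for extensions along B} preserves the residue field), one arranges $p$ to lie in the smooth locus of $\spec(A \otimes_\qR \qR') \to \spec(\qR')$, after which the preceding argument produces the required section.
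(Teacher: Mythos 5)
Your proposal contains a genuine and fatal gap in the final step, and a second serious one that you dismiss as automatic.

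\textbf{The claim that $\widetilde{\qR}\cong\qR$ is false.} \Cref{lemma a curve through the pt} produces a local homomorphism of DVRs $\qR\to\widetilde{\qR}$ that is surjective (on spectra), unramified, and an isomorphism on residue fields, but this does \emph{not} force it to be an isomorphism: the fraction field can grow. For a concrete example, $\qR=k[t]_{(t)}$ and $\widetilde{\qR}=k[s]_{(s)}$ with $t\mapsto s^2-s$ is unramified with residue field $k$ on both sides, yet $K(\qR)=k(t)\subsetneq k(s)=K(\widetilde{\qR})$. The paper handles precisely this phenomenon with its Step 7: since $\qR\to\widetilde{\qR}$ is \'etale with trivial residue extension, \cite{rydh2011etale}*{Theorem B} exhibits $\spec(\qR)$ as the push-out of $\spec(K(\widetilde{\qR}))\to\spec(\widetilde{\qR})$ and $\spec(K(\widetilde{\qR}))\to\spec(K(\qR))$, which is what lets one descend the map $\spec(\widetilde{\qR})\to[\spec(A)/\GL_n]$ to $\spec(\qR)$ once the two maps agree on $\spec(K(\widetilde{\qR}))$. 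Without Rydh's theorem you cannot conclude.

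\textbf{The orbit matching is not automatic.} Even granting you a section $\spec(\qR)\to\spec(A)$, the induced map $\eta\to[\spec(A)/\GL_n]$ need not agree with $\widetilde\phi$: the generic fiber $\spec(A)_\eta$ typically has several $\GL_n$-orbits (only one closed), and your curve through $p\in O_q$ has no reason to hit the orbit of $\phi(\eta)$. Triviality of $\GL_n$-torsors over a DVR only tells you the composites to $\cB\GL_n$ are isomorphic; it says nothing about the lifts to $[\spec(A)/\GL_n]$, which are classified by orbits. The paper solves this with its Steps 1, 2, and 6: first reduce so that the stabilizer of $\phi(\eta)$ is geometrically connected (via Zariski's main theorem) and so that the orbit of $\phi(\eta)$ is \emph{dense} (replace $\spec(A)$ by the orbit closure). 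Then the carefully chosen $\widetilde{\qR}$ has generic point landing in that dense orbit, and the cohomological argument with $H^1(F_i,S)=0$ over $C_1$ fields (\cite{Galois_cohom}*{Chapter 3, Section 2.3, Theorem 1'}) upgrades ``same geometric orbit'' to ``same orbit over $K(\widetilde{\qR}')$'' after an extension $\qB\to\qB'$. Both the dense-orbit reduction and the connected-stabilizer reduction are essential to make this cohomological vanishing apply; your proposal omits both. Finally, your treatment of the smoothness hypothesis of \Cref{lemma a curve through the pt} is only a gesture: the paper makes this precise by normalizing (Step 3) and invoking an Abhyankar-type extension of $\qB$ (\cite{kollar_modbook}*{Lemma 2.53}) to ensure the central fiber of $\spec(A)\to\spec(\qR)$ is reduced, so that the smooth locus is dense in the special fiber. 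This is the actual role of the permitted extension $\qB\to\qB'$; your hand-waved ``equivariant slice or modification'' does not substitute for it.
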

 We will denote by $k_\qR$ the residue field of $\qR$, and with $K(\qR)$ its fraction field; the proof will essentially proceed as follows. First we use Zariski's main theorem to reduce to the case where the stabilizer of $\eta$ is connected. Using \Cref{theorem homogeneous space has a point}, every orbit over $k_\qR$ has a $k_\qR$-point.
 After some reduction steps which involve a refinement of Abhyankar's lemma (\cite{kollar_modbook}*{Lemma 2.53}) and \Cref{lemma a curve through the pt}, we will produce a DVR $\qR'$ with the same residue field as $\qR$ and with an \'etale morphism $h:\spec(\qR')\to \spec(A)$, which maps the generic point $\eta'=\spec(K(\qR'))$ of $\spec(\qR')$ to a point in the same orbit as $\phi(\eta)$. As $\psi(\eta')$ and $\phi(\eta)$ will be in the same orbit, they will be isomorphic over $\overline{K(\qR)}$, the algebraic closure of $K(\qR)$.
 
Then, using non-abelian cohomology, we will show that after possibly extending $\qR$ along $\qB'\to \qB$, the points $\psi(\eta')$ and $\phi(\eta)$ give points in the stack $[\spec(A)/\GL_n]$ which are isomorphic \textit{over} $\spec(K(\qR'))$. Finally we use Rydh's theorem \cite{rydh2011etale}*{Theorem B} to argue that $\spec(\qR)$ is the push-out of $\spec(K(\qR'))\to \spec(K(\qR))$ and $\spec(K(\qR'))\to \spec(\qR')$ so to obtain the desired morphism $\spec(\qR)\to [\spec(A)/\GL_n]$.

\begin{EG} 
To navigate the argument, one can consider the example of $\cB\PGL_{m,\qR}:=[\spec(\qR)/\PGL_m]$, where the action on $\spec(\qR)$ is the trivial one. This stack is isomorphic to a stack of the form $[\spec(A)/\GL_n]$, and our data correspond to a Severi-Brauer variety $X_\eta$ over the generic point $\eta$ of $\spec(\qR)$. 

In this case, the aforementioned morphism $\psi$ could be taken to be given by the trivial Severi-Brauer variety $\spec(\qR)\times\bP^m$ over $\spec(\qR)$, and the base change of $\qB$ is needed in order to trivialize $X_\eta$. 

A similar argument applies also to stacks $\cB G_{\qR}:=[\spec(\qR)/G]$ where $G$ is a connected reductive group: in fact, Steinberg's theorem easily implies that after a suitable extension $\qB\to \qB'$ the pullback of a $G$-torsor over $\eta$ becomes trivial, hence we can extend it in a trivial way.
\end{EG}
\begin{proof}
We will denote by $k_\qR$ the residue field of $\qR$, with $K(\qR)$ its fraction field, with $G$ the linear algebraic group $\GL_n$ and with $\ast$ the action of $G$ on $\spec(A)$. 
The first four steps of the proof will be reduction steps: we will modify $A$ and extend $\qB$ to reduce to the case where $\spec(A)\to \spec(\qR)$ has reduced closed fiber, $\spec(A)$ is normal, the stabilizer of $\phi(\eta)$ is geometrically connected and the orbit of $\phi(\eta)$ is dense. In each reduction step, we will need to check two things. First, we need to check that assumptions (i), (ii) and (iii) still hold, after each modification of $A$. Then we need to check that, after modifying $A$ and $\qB$ in the $n^{\rm th}$ step, the assumptions we achieved in the $(n-1)^{\rm th}$ step still hold.
After the reductions step, we will use \Cref{theorem homogeneous space has a point} and \cite{rydh2011etale}*{Theorem B} to conclude the proof.

\textbf{Reduction step 1.} We reduce to the case where the stabilizer of $\phi(\eta)\in \spec(A)$ is geometrically connected.
Since the stabilizer of $\phi(\eta)$ has a $K(\qR)$-point (the identity), it follows from \cite[Exercise 3.2.11]{liubook} that if we can reduce to the case of a connected stabilizer, we can then further reduce ourselves to the geometrically connected case.

The action of $G$ induces a morphism $$\xi: G_\eta\to \spec(A)\text{ that sends }g\mapsto g*\phi(\eta).$$ If $S^\circ$ denotes the connected component of the identity of the stabilizer of $\phi(\eta)$, we can consider the quotient scheme $G_\eta /S^\circ$ and the induced morphism $G_\eta/S^\circ \to \spec(A)$ which is $G_\eta$-equivariant and quasi-finite. Therefore the composition 
$$[(G_\eta/S^\circ)/G_\eta] \longrightarrow [\spec(A)_\eta/G_\eta]\longrightarrow [\spec(A)/G]$$
satisfies the assumptions of Zariski's main Theorem \cite[Théorème 16.5]{LMB}, so it factors as an open embedding and a finite morphism as below:
$$[(G_\eta/S^\circ)/G_\eta] \hookrightarrow \cZ\xrightarrow{\Phi} [\spec(A)/G].$$
We check that the assumptions (i), (ii) and (iii) hold for $\cZ$.

(i) As $\Phi$ is finite, it is affine, so $\cZ\times_{[\spec(A)/G]}\spec(A) = \spec(A')$ and the morphism $\spec(A')\to \cZ$ is a $\GL_n$-torsor: we have $\cZ=[\spec(A')/G]$.

(ii) From how $\xi$ is constructed, the section $\phi$ lifts to a section $\phi':\eta \to (G_\eta/S^\circ)$ so to a section $\phi':\eta\to \cZ$, and its automorphism group is $S^\circ$. As $G=\GL_n$ and every $\GL_n$-torsor is trivial over $\eta$, the map $\phi'$ lifts to a map $\phi'':\eta\to \spec(A')$, and its stabilizer is isomorphic to $S^\circ$ (so in particular it is connected).

(iii) As $\cZ\to [\spec(A)/G]$ is universally closed, also $\cZ\to \spec(\qR)$ will be universally closed. So replacing $\spec(A)$ with $\spec(A')$, we can assume that the stabilizer of $\phi(\eta)$ is connected.

\textbf{Reduction step 2.}
We reduce to the case where $\spec(A)$ is irreducible, and the orbit of $\phi(\eta)$ is dense.

This follows by replacing $\spec(A)$ with the closure of the orbit $\cO(\eta)$ of $\phi(\eta)$, which we denote by $\overline{\cO(\eta)}$, and which is connected as $\cO(\eta)$ is connected since $G=\GL_n$. To check that $\cO(\eta)$ is open in $\overline{\cO(\eta)}$ we use Chevalley's theorem. The morphism $G\to \overline{\cO(\eta)}$, $g\mapsto g*\eta$ is dominant and of finite type, so its image contains a non-empty open subset $U$. Then $\cO(\eta)$ is covered by $gU$ for $g\in G$.

We check that (i), (ii) and (iii) hold. It is clear that (i) and (iii) hold. As for (ii), since the map $[\overline{\cO(\eta)}/G]\to [\spec(A)/G]$ is universally closed, so also its composition $[\overline{\cO(\eta)}/G]\to [\spec(A)/G]\to\spec(\qR)$ is universally closed, and satisfies the assumption of the theorem. It is clear that the stabilizer of $\phi(\eta)$ is still connected.

\textbf{Reduction step 3.}
We reduce to the case where $\spec(A)$ is normal, connected, and the orbit $\cO(\eta)$ of $\phi(\eta)$ is dense.

This follows by taking the normalization $\nu:\spec(A^n)\to \spec(A)$, since $\nu$ is an isomorphism over $\cO(\eta)$. From the universal property of the normalization $G$ acts on $\spec(A^n)$, so (i) holds. As the normalization is finite, also (ii) holds. Finally, as the normalization is an isomorphism over $\cO(\eta)$, point (iii) still holds. It is clear that the stabilizer of $\phi(\eta)$ is still connected.

\textbf{Reduction step 4.} We reduce to the case where $\spec(A)\to \spec(\qR)$ has reduced central fiber.

After Step 3, the assumptions of 
\cite{kollar_modbook}*{Lemma 2.53} apply: we can perform a ramified extension $\qB\to \qB'$ with ramification index divisible enough so that, if $(A')^n$ is the normalization of $A':=A\otimes_\qB \qB'$, the closed fiber of $\spec((A')^n)\to \spec(\qR\otimes_\qB \qB')$ is reduced. We check that (i), (ii) and (iii) still hold.

(i) From the universal property of the fiber product $G$ acts on $\spec(A\otimes_\qB \qB')$, and from the universal property of the normalization the action of $G $ on $\spec(A')$ extends to an action of $G$ on $\spec((A')^n)$.

(iii) As $\qR\otimes_\qB \qB'$ is a DVR from \Cref{lemma tensor product thing is a DVR},
if we denote by $\eta'$ its generic point $\phi$ induces a section $\phi':\eta'\to \spec(A')$, and its orbit (which we denote by $\cO'$) is open (and smooth). From the universal property of the normalization, and since $\cO'$ is smooth, the normalization is an isomorphism over $\cO'$, so $\phi'$ lifts to a map $(\phi')^n: \eta'\to \spec((A')^n)$, and the orbit of $(\phi')^n(\eta')$ is open. 

Before checking (ii) we  check that the reductions of Steps 1 and 3 still hold. We begin by Step 3, namely that we can assume $\spec((A')^n)$ to be normal and connected. This holds since, up to replacing $\spec((A')^n)$ with its connected component $\spec((A')^n_c)$ containing $(\phi')^n(\eta')$ (which is the orbit closure of $(\phi')^n(\eta')$), we can still assume that 
$\spec((A')^n_c)$ satisfies the assumptions of Step 3. 

The stabilizer of $\phi'(\eta')$ is still geometrically connected, and since the normalization is an isomorphism on the orbit of $\phi'(\eta')$, also the stabilizer of $(\phi')^n(\eta')$ is geometrically connected: the reduction of Step 1 still holds.

(ii) Being universally closed is stable under base change so $[\spec(A')/G]\to \spec(\qR\otimes_\qB \qB')$ is universally closed. Since the normalization is a finite morphism, and the inclusion of a connected component is finite, also $\spec((A')^n_c)\to \spec(A')$ is universally closed, so 
$[\spec((A')^n_c)/G]\to [\spec(A')/G]$ is universally closed. Point (ii) follows as the composition of universally closed maps is universally closed.

This is the end of our reduction steps. So now we can assume that $\spec(A)$ is irreducible and has a dense open orbit which is the orbit of $\phi(\eta)$, and the special fiber of $\spec(A)\to \spec(\qR)$ is reduced.

\textbf{Step 5.} We construct a DVR $\widetilde{\qR}$ with generic point $\eta_{\widetilde{\qR}}$ and residue field $k_{\widetilde{\qR}}$, and a morphism $\psi:\spec(\widetilde{\qR})\to \spec(A)$ such that:
\begin{enumerate}
    \item the composition $\spec(\widetilde{\qR})\to \spec(\qR)$ gives an extension of DVRs $\qR\to \widetilde{\qR}$ that is \'etale;
    \item the extension of residue fields $k_\qR\to k_{\widetilde{\qR}}$ is an isomorphism;
    \item $\psi(\eta_{\widetilde{\qR}})\in \cO(\eta)$.
\end{enumerate}

Consider $C_\eta$ the complement of $\cO(\eta)$ in $\spec(A\otimes_\qR K(\qR))$, and let $C$ be its closure in $\spec(A)$. It is clear that $C$ is $G$-invariant and does not contain the whole closed fiber of $\spec(A)\to \spec(\qR)$. Therefore there is an orbit $\cO$ of $G$ contained in the closed fiber, which does not intersect $C$, and whose closed points are all smooth points for $\spec(A)\to \spec(\qR)$. Indeed, as $f$ has reduced fibers, the smooth locus of $f$ is open and dense in the special fiber. Then one can take the orbit of a smooth point away from $C$. This orbit has a $k_\qR$-point $p$ from \Cref{theorem homogeneous space has a point}. \Cref{lemma a curve through the pt} gives a DVR $\widetilde{\qR}$ and a morphism $\psi:\spec(\widetilde{\qR})\to \spec(A)\to \spec(\qR)$ that satisfies (1) and (2). As $p$ does not belong to $C$ and $C$ is closed under specialization, it also satisfies (3).

\textbf{Step 6.} We show that the point $\psi(\eta_{\widetilde{\qR}})$ constructed in Step 5 and $\phi(\eta)$ are in the same $G_{K(\widetilde{\qR})}$-orbit, after possibly replacing $\qR$ with an extension along $\qB$.

As the statement is only over the generic point of $\spec(\qB)$, we first work over the algebraic closure $F_\qB$ of $K(\qB)$. We denote by $F$ (resp. $\widetilde{F}$) the fiber products $F_\qB\otimes_{K(\qB)}K(\qR)$ (resp. $F_\qB\otimes_{K(\qB)} K(\widetilde{\qR})$).
From \Cref{corollary R otimes B K(B) is a field} we know that $F$ is a field, and since $\qR\to \widetilde{\qR}$ is generically finite, we deduce that $\widetilde{F}$ is a finite union of fields, i.e. $\widetilde{F} = \bigcup_{i=1}^kF_i$.
By assumption the extension $K(\qB)\to K(\qR)$ has transcendence degree 1, and since $K(\qR)\to K(\widetilde{\qR})$ is finite, \textit{the fields $F_i$ are }$C_1$.

We denote by $\phi_F$ the pullback of $\phi$ along $\spec(F)\to \spec(\qR)$, then by Step 1 the stabilizer of $\phi_F(\spec(F))$ is geometrically connected; we denote it by $S$. Then for every $i$ we have the following exact sequence of pointed sets: 
$$ G(F_i)\to (G/S)(F_i)\to H^1(F_i, S).$$
But as $S$ is connected and reductive, and $F_i$ is $C_1$, from \cite{Galois_cohom}*{Chapter 3, Section 2.3, Theorem 1'}, the set $ H^1(F_i, S)$ is trivial so the map $G(F_i)\to (G/S)(F_i)$ is surjective. In particular, there are $g_i\in G(F_i)$ that send the pullback of $\phi(\eta)$ to the pullback of $\psi(\spec(F_i))$.
Since only finitely many elements of the field extension $K(\qB)\to F_\qB$ will be needed to write each $g_i$, we can replace the field extension $K(\qB)\to F_\qB$ with a finite field extension $K(\qB)\to K(\qB')$.

Then, consider an extension $\qB\to \qB'$ that on the fraction field induces $K(\qB)\to K(\qB')$ as before, let $\qR':=\qR\otimes_\qB \qB'$ with residue field $k_{\qR'}$ and fraction field $K(\qR')$, and let $\widetilde{\qR}':=\widetilde{\qR}\otimes_\qB\qB'$. There is no reason to believe that  $\widetilde{\qR}'$ is a DVR; however, the map $\qR'\to \widetilde{\qR}'$ is still \'etale, the closed fiber of $\spec(\widetilde{\qR}')\to \spec(\qR')$ is a point which we denote by $p$, and the residue field of $\widetilde{\qR}'$ at p is $k_{\qR}$.
So the local ring of $\cO_{\widetilde{\qR}'}$ at $p$, which we denote by $\widetilde{\qR}'_c$, is a DVR which is \'etale over $\qR$, with residue field $k_\qR$ and whose generic point $K(\widetilde{\qR}'_c)$ is in the same $G(K(\widetilde{\qR}'_c))$-orbit as $\phi'(\spec(K(\qR')))$. 
By replacing $\qR$ with $\qR'$ and $\widetilde{\qR}$ with $\widetilde{\qR}'_c$, we get the desired conclusion.

\textbf{Step 7.} Now we finish the proof.
After Step 6, we have an \'etale extension $\qR\to \widetilde{\qR}$ which induces an isomorphism on residue fields, and with a diagram as follows:
$$\xymatrix{\spec(K(\widetilde{\qR}))\ar[d] \ar[r] & \spec(K(\qR)) \ar[d] \ar[r] & [\spec(A)/G] \ar[d] \\ \spec(\widetilde{\qR})\ar[r] \ar[rru] & \spec(\qR) \ar[r]^\Id & \spec(\qR).}$$
As the extension $\qR\to \widetilde{\qR}$ gives an isomorphism of residue fields, from \cite{rydh2011etale}*{Theorem B} the square on the left above is a push-out. Then there is a map $\spec(\qR)\to [\spec(A)/G]$ as desired.
\end{proof}
\section{Extending maps from deformations of curves II}\label{section technical results}
In the previous Section we proved some results about extending maps from the generic fiber of a family of curves over a DVR to the generic points of the special fiber, i.e. to codimension one points of the family. In this Section we focus instead on extending maps to codimension two points. We will see that in order to do this we might need to modify a bit the special fiber, i.e. by adding either a stacky structure (twisted nodes) or by adding a whole new component (destabilizing stacky $\cP^1$s).
 We also introduce twisted blow-ups of smooth surfaces (\Cref{sub:twisted blow-ups}).

\subsection{Purity lemma for smooth points}
First, we show how to extend maps when the domain is a pointed surface that satisfies some regularity assumptions.
\begin{Prop}\label{prop smooth purity}
Consider the following diagram, where $X$ is an $S_2$ Deligne-Mumford stack, $p$ a closed point and $G$ is a linear algebraic group:
$$\xymatrix{X\smallsetminus \{p\}\ar[d] \ar[r]^-a &[\spec(A)/G]\ar[d] \\ X \ar[r] & \spec(A^{G})}$$
Assume that $G=\GL_n$ or $G=\Gm^n$, and that either one of the following holds:
\begin{enumerate}
    \item $X$ is smooth of dimension 2, or
    \item The map $X\smallsetminus \{p\}\to [\spec(A)/G]\to G$
 extends to $X\to \cB G$.\end{enumerate}
Then there is a unique lifting $X\to [\spec(A)/G]$.
\end{Prop}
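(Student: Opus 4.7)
A morphism $X \to [\spec(A)/G]$ is the same datum as a $G$-torsor $P \to X$ together with a $G$-equivariant morphism $P \to \spec(A)$. The plan is to extend each of these two pieces separately across $p$. Both extension problems are \'etale-local on $X$: since two extensions of a morphism to a separated target across a schematically dense open agree (and $X \smallsetminus \{p\}$ is schematically dense in $X$ because $S_2$ implies $S_1$), any \'etale-local extension descends uniquely to $X$.

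\textbf{Step 1: extending the torsor.} In case (2) the torsor is extended by hypothesis. In case (1), $X$ is smooth of dimension two, and we invoke classical purity along $j \colon X \smallsetminus \{p\} \hookrightarrow X$. For $G = \Gm^n$ the torsor corresponds to an $n$-tuple of line bundles $\cL_i$ on $X \smallsetminus \{p\}$; each $j_* \cL_i$ is a rank-one reflexive coherent sheaf on the smooth surface $X$, hence invertible. For $G = \GL_n$ the associated rank-$n$ vector bundle $\cE$ on $X \smallsetminus \{p\}$ has reflexive hull $j_* \cE$ which is locally free on $X$ (reflexive coherent sheaves over a two-dimensional regular local ring are free). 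This produces the desired extension of the torsor.

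\textbf{Step 2: extending the equivariant map.} Given the extended torsor $P \to X$, pick an \'etale morphism $\pi \colon X' \to X$ trivializing $P$. The $G$-equivariant morphism $P|_{X \smallsetminus \{p\}} \to \spec(A)$ pulls back to an ordinary morphism $X' \smallsetminus \pi^{-1}(p) \to \spec(A)$. Fixing a presentation $A = k[x_1, \dots, x_m]/I$, this morphism is the datum of functions $f_1, \dots, f_m \in \Gamma(X' \smallsetminus \pi^{-1}(p), \cO_{X'})$ satisfying the relations in $I$. Since \'etale morphisms preserve $S_2$, the points of $\pi^{-1}(p)$ are $S_2$ in $X'$, so each $f_i$ extends uniquely to $\Gamma(X', \cO_{X'})$ by Hartogs, and the relations continue to vanish by density. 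The resulting local extensions agree on overlaps in the \'etale topology and descend to a morphism $X \to [\spec(A)/G]$, which is unique by the opening remark.

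\textbf{Main obstacle.} The technical heart of the argument is the purity step in case (1), where the hypothesis $G = \GL_n$ or $G = \Gm^n$ is essential: it reduces the extension of the torsor to the reflexive-hull construction for rank-one and higher-rank locally free sheaves on smooth surfaces. For more general linear groups, purity for $G$-torsors across codimension-two loci on a smooth surface is more delicate, which is precisely why case (2) assumes such an extension exists \emph{a priori}.
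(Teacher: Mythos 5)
Your proof is correct and follows essentially the same two-step route as the paper's: extend the $G$-torsor across $p$ via the reflexive hull on a smooth surface (the paper's invocation of Hartshorne for rank-$n$ reflexive sheaves), then extend the $G$-equivariant map to $\spec(A)$ using the $S_2$ Hartogs property on a space with codimension-two boundary. The only cosmetic difference is in the second step, where the paper passes to the total space $\cG$ of the extended torsor, which is affine and $S_2$, and extends the ring map $A \to H^0(\cO_F) = H^0(\cO_\cG)$ directly, while you trivialize the torsor over an \'etale cover and extend the classifying functions on the base; this is the same Hartogs argument packaged slightly differently.
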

\begin{proof}We do the case $G=\GL_n$, the case $G=\Gm^n$ is analogous.
As the lifting will be unique, up to working on an \'etale neighbourhood of $p$ we can assume that $X$ is affine.
We first show that if $X$ has dimension 2, then the map $X\smallsetminus \{p\}\to [\spec(A)/\GL_n]\to \cB\GL_n$
 extends to $X\to \cB\GL_n$. Indeed, the morphism $a$ induces a $\GL_n$-torsor $F\to X\smallsetminus \{p\}$, together with a $\GL_n$-equivariant map $F\to \spec(R)$. Let $\cF$ be the rank $n$ vector bundle on $X\smallsetminus \{p\}$ induced by $F$. If $i:X\smallsetminus \{p\}\hookrightarrow X$ is the inclusion, then $\cG' = i_*\cF$ is a reflexive sheaf on a smooth surface. From \cite{Har80}*{Corollary 1.4} $\cG'$ is a vector bundle. In particular, from the equivalence between vector bundles of rank $n$ and $\GL_n$-torsors, there is a $\GL_n$-torsor $\cG\to X$ extending $F$.

Observe now that:
\begin{enumerate}
    \item the map $\cG\to X$ is affine so $\cG$ is affine,
    \item $\cG$ is $S_2$, and
    \item $F$ is open in $\cG$, with complement of codimension 2.
\end{enumerate}
Our goal is to extend the map $\alpha: F\to \spec(A)$ to a $G$-equivariant map $\cG\to \spec(A)$. But $\alpha$ induces $R\to H^0(\cO_F)$ and from points (2) and (3) above, $H^0(\cO_F) = H^0(\cO_\cG)$. In particular, we have a map $A\to H^0(\cO_\cG)$ which is compatible with the $G$-action. This induces the desired equivariant map $\cG\to \spec(A)$.
\end{proof}

\subsection{Twisted blow-up}\label{sub:twisted blow-ups}
In what follows, we define \emph{twisted blow-ups} of smooth surfaces: these are birational transformations in which a point is replaced by a stacky $\cP^1$. This construction will be needed for the proof of \Cref{proposition val criterion for the nodes}.
\begin{Lemma}\label{lemma blow up + covering stack does the job}
Let $R$ be a DVR with parameter $\pi$ and let $\cA$ be a smooth 2-dimensional local $R$-algebra with maximal ideal $\mathfrak{m}=(\pi,y)$, let $d> 0, m> 0$ two positive integers. Let $S:=\spec(\cA/(y))$ be the closed subscheme of $\spec(\cA)$ of equation $y=0$ and let $B\to \spec(R)$ be the blow-up of the ideal $(\pi^{md},y)$.

Then there is a Deligne-Mumford stack $\cB_{m,d}$ with coarse space $\rho:\cB_{m,d}\to B$ and a line bundle $\cI$ on $\cB_{m,d}$ such that:
\begin{enumerate}
    \item the closed substack of $\cB_{m,d}$ given by $\pi = 0$ is a nodal twisted curve;
    \item the closed embedding $S\to \spec(\cA)$ lifts to a closed embedding $j:S\to \cB_{m,d}$;
    \item we have $j^*\cI = \cO_S(-mp)$;
    
             \item the exceptional divisor of $b:\cB_{m,d}\to \spec(\cA)$ is isomorphic to a root stack of $\bP^1$ at a point;
    \item the degree of $\cI$ restricted to the exceptional divisor of $b$ is $\frac{1}{d}$;
    \item  $b_*\cI^{\otimes k} = \cO_{\spec(\cA)}$ for every $k\le 0$, and
        $(\pi^{mk},y^{\lceil \frac{k}{d}\rceil})\subseteq b_*\cI^{\otimes k}$ for every $k>0$.
\end{enumerate}

Moreover, if there is a cyclic group $\bmu_\ell$ that acts on $\cA$ by fixing $\pi$ ($\xi*\pi = \pi$) and $\xi*y = \xi y$, this action can be extended to $\cB_{m,d}$ in such a way that the action is faithful on the exceptional divisor of $b$, fixing only the stacky point and the intersection of the exceptional divisor with $j(S)$, and the quotient $[\cB_{m,d}/\bmu_\ell]$ has cyclic stabilizers.

Finally, the constructions of $\cB_{m,d}$ and $\cI$ commute with \'etale base change, in the following sense: let $\spec(\cA')\to \spec(\cA)$ be an \'etale morphism, and assume that the fiber of $\mathfrak{m}$ is a unique maximal ideal $\mathfrak{m}'$ generated by the pullback of $\pi$ and $y$. Let $\cB_{m,d}'$ and $\cI'$ be the stacks obtained by applying the first part of this lemma to $\mathfrak{m}'$. Then $\cB_{m,d}'\cong\cB_{m,d}\times_{\spec(\cA)} \spec(\cA')$, and $\cI'$ is the pull-back of $\cI$.
\end{Lemma}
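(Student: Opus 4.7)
The plan is to construct $\cB_{m,d}$ as the canonical stack of $B$ at its unique singularity, equivalently as the weighted blow-up of $\spec(\cA)$ at $(\pi,y)$ with weights $(1,md)$. The blow-up $B$ has two standard charts: the smooth chart $\spec(\cA[u]/(y-u\pi^{md}))$, and the chart $\spec(R[y,v]/(vy-\pi^{md}))$ which carries a single $A_{md-1}$ singularity at $(y,v,\pi)=0$. I would take $\cB_{m,d}$ to agree with $B$ on the first chart $U_1$ and to replace the singular chart by the stacky chart $U_2=[\spec(R[x,w]/(xw-\pi))/\bmu_{md}]$, where $\bmu_{md}$ acts by $(x,w)\mapsto(\zeta x,\zeta^{-1}w)$ and the coarse identifications are $y=x^{md}$, $v=w^{md}$, $\pi=xw$. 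The two charts glue along their overlap (a scheme, since $\bmu_{md}$ acts freely away from the stacky point) via $u=1/w^{md}$. Set $E_{\mathrm{red}}:=\{\pi=0\}$ on $U_1$ and $\{x=0\}$ on $U_2$; this is a Cartier divisor on $\cB_{m,d}$ (though only Weil on $B$), and take $\cI:=\cO_{\cB_{m,d}}(-mE_{\mathrm{red}})$.

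Properties (1)--(4) then follow by direct local inspection. The special fiber $\{\pi=0\}$ equals $E_{\mathrm{red}}$ on $U_1$ and $\{xw=0\}$ on $U_2$, modeled near the stacky point by the standard toric twisted node $[\spec(k[x,w]/(xw))/\bmu_{md}]$, giving (1). The strict transform of $\{y=0\}$ coincides with $\{u=0\}\subset U_1$, which lies entirely in the schematic chart and is isomorphic to $S=\spec(R)$, giving (2). Its intersection with $E_{\mathrm{red}}$ is the single non-stacky point $q=\{u=\pi=0\}$ corresponding to the closed point $p$ of $S$, so $j^*\cI=\cO_S(-mp)$, giving (3). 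The exceptional $E_{\mathrm{red}}$ is the union $\bA^1_u\cup[\bA^1_w/\bmu_{md}]$ (with $\bmu_{md}$ of weight $+1$ on $w$), glued via $u=1/w^{md}$: this is precisely the root stack of $\bP^1$ of order $md$ at the single stacky point $p_s=\{w=0\}$, giving (4).

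The main obstacle is property (5), the fractional degree. The cleanest route is through the principal divisor of the function $y\in\cA$ pulled back to $\cB_{m,d}$: on $U_1$, $y=u\pi^{md}$ vanishes to order $1$ on $\{u=0\}=j(S)$ and to order $md$ on $\{\pi=0\}=E_{\mathrm{red}}$, while on $U_2$, $y=x^{md}$ vanishes to order $md$ on $\{x=0\}=E_{\mathrm{red}}$. Globally this gives $(y)=j(S)+md\cdot E_{\mathrm{red}}$, so $\cO(md\cdot E_{\mathrm{red}})\cong\cO(-j(S))$. Restricting to $E_{\mathrm{red}}$ and using that $j(S)\cap E_{\mathrm{red}}=q$ is a single non-stacky point of degree $1$, one obtains $\deg(\cO(md\cdot E_{\mathrm{red}})|_{E_{\mathrm{red}}})=-1$, so $\deg(\cO(E_{\mathrm{red}})|_{E_{\mathrm{red}}})=-1/md$ and $\deg(\cI|_{E_{\mathrm{red}}})=1/d$. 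For (6) with $k>0$, a function $f\in\cA$ lies in $b_*\cI^k$ iff its pullback vanishes to order $\geq km$ on $E_{\mathrm{red}}$ in both charts; $\pi^{mk}$ satisfies this with exact order $mk$, and $y^{\lceil k/d\rceil}=u^{\lceil k/d\rceil}\pi^{md\lceil k/d\rceil}$ on $U_1$ has $\pi$-order $md\lceil k/d\rceil\geq mk$. For $k\leq 0$, $\cI^k=\cO(|k|mE_{\mathrm{red}})$ and $b_*\cI^k=\cO_{\spec(\cA)}$ by normality, since rational functions with poles confined to the codimension-two origin extend to regular functions.

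The $\bmu_\ell$-equivariance and the étale base-change statement are essentially formal. Since $\bmu_\ell$ preserves the ideal $(\pi^{md},y)$, it acts on $B$ and fixes the singular point; by functoriality of the canonical-stack construction, the action lifts uniquely to $\cB_{m,d}$. Concretely, on the cover of the stacky chart the lift is given by $\xi\cdot x=\xi^a x$, $\xi\cdot w=\xi^{-a} w$ with $a\cdot md\equiv 1\pmod{\ell}$ when $\gcd(\ell,md)=1$, and otherwise obtained by coupling the action to the $\bmu_{md}$-stabilizer. Faithfulness on $E_{\mathrm{red}}$, the two fixed points $p_s$ and $q$, and the cyclic-stabilizer property of $[\cB_{m,d}/\bmu_\ell]$ all follow by direct inspection of weights. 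For the étale base-change statement, since $\cB_{m,d}$ and $\cI$ are built entirely from the ideal $(\pi,y)\subset\cA$ and its powers, both commute with any étale morphism $\spec(\cA')\to\spec(\cA)$ whose fiber over $\km$ is generated by the pullback of $\pi$ and $y$.
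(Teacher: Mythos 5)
Your construction is correct but genuinely different from the paper's. You take $\cB_{m,d}$ to be the full canonical (covering) stack $\cB'$ of $B$: smooth charts $\spec(\cA[u]/(y-u\pi^{md}))$ and $[\spec(R[x,w]/(xw-\pi))/\bmu_{md}]$, so the stacky point of the exceptional has stabilizer $\bmu_{md}$, and you define $\cI:=\cO(-mE_{\mathrm{red}})$ directly as a Cartier divisor on the smooth stack. The paper instead first builds $\cB'$ and a map $\cB'\to \cB\Gm$ (via the purity lemma applied to the $\bQ$-divisor $mE_{\mathrm{red}}$ on $B\smallsetminus\{q\}$), and then sets $\cB_{m,d}$ to be the \emph{relative coarse moduli space} of that map. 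Since $\bmu_{md}$ acts on the fiber of $\cO(-mE_{\mathrm{red}})$ at the stacky point with weight $m$, this kills the kernel $\bmu_m$, leaving a $\bmu_d$ stabilizer. So the paper's $\cB_{m,d}$ is a strictly smaller stack than yours whenever $m>1$, with a $\bmu_d$-twisted node and $\bmu_d$-root-stack exceptional, as opposed to your $\bmu_{md}$ in both places. Both versions satisfy all six listed properties as stated (your verifications are sound, in particular the degree computation via $(y)=j(S)+md\,E_{\mathrm{red}}$ gives $1/d$ in both, since the degree convention divides by the order needed for descent). The advantage of your route is that it avoids invoking the purity lemma and gives a very concrete chart description; the advantage of the paper's route is that the resulting map to $\cB\Gm$ is representable from the outset, so smaller stabilizers propagate into \Cref{proposition val criterion for the nodes}. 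With your $\cB_{m,d}$ one would have to take a relative coarse moduli space later (as the paper already builds into the proof of that proposition), and the two constructions then converge to the same final answer. Your sketch of the ``moreover'' part when $\gcd(\ell,md)>1$ is terse: you should spell out that one defines an action of $\bmu_{\ell md}$ on $\spec(R[x,w]/(xw-\pi))$ by $(x,w)\mapsto(\zeta x,\zeta^{-1}w)$, restricting to the given $\bmu_{md}$-action, so that $\bmu_\ell=\bmu_{\ell md}/\bmu_{md}$ acts on the quotient stack and $[\cB_{m,d}/\bmu_\ell]$ has stabilizers inside $\bmu_{\ell md}$ near the stacky point; this mirrors the paper's $\bmu_{\ell d}/\bmu_d$ trick.
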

\begin{Def}
The stacks $\cB_{m,d}$ will be called an $(m,d)$-blow up, and $\cI$ will be called the ideal sheaf of the $(m,d)$-exceptional locus. When the integers $(m,d)$ are unnecessary, we will just call it a \emph{twisted blow up}. 
\end{Def}
The situation is summarized in the following diagram:
$$\xymatrix{\cB_{m,d}\ar[r] & B\ar[r] & \spec(\cA) \\ & & S\ar[ull]^j\ar[u]}$$
Point (6) in \Cref{lemma blow up + covering stack does the job} will allow us to control the algebra $\bigoplus_i b_*\cE^i$.
\begin{proof}
Recall that $B=\operatorname{Proj}_{\cA}(\cA[u,v]/(\pi^{md}u-yv))$. The chart $D(u)$ where $u\neq 0$ is isomorphic to
$$\spec(\cA\left[\frac{v}{u}\right]/(\pi^{md}-y\frac{v}{u}))$$ and the exceptional divisor has equation $y=0$.
The chart $D(v)$ where $v \neq 0$ instead is isomorphic to $$\spec(\cA\left[\frac{u}{v}\right]/(\frac{u}{v}\pi^{md}-y))\cong \spec(R\left[\frac{u}{v}\right]),$$ the exceptional divisor is generated by the ideal $\pi^{md}$, and the proper transform of $S$ is given by $\frac{u}{v}=0$. Observe also that $D(u)$ is smooth away from a single point (which we denote by $q$), and at that point there is a cyclic quotient singularity. We denote by $\cB'\to B$ the covering stack that resolves this singularity.

Now consider the $\bQ$-Cartier divisor
given by the $d$-th root of the exceptional divisor on $
B\smallsetminus\{q\}$. This is the divisor with equation $\pi^m=0$ on $D(v)$ and 1 away from the exceptional divisor. It induces a map $B\smallsetminus \{q\} \to \cB\Gm$ that from \Cref{prop smooth purity} gives a map $\cB'\to \cB\Gm$. Let $\cB_{m,d}$ be the relative coarse moduli space of this map, and let $\cI$ be the dual of the resulting line bundle on $\cB_{m,d}$. In particular, $\cI$ is the ideal sheaf generated by $\pi^m$ on $D(v)$ (where $\cB_{m,d}\to B$ is an isomorphism) and 1 on the locus where $\cB_{m,d}\to B$ is an isomorphism. We check the desired conditions for $\cB_{m,d}$. As now $m$ and $d$ play no role in the rest of the argument, we drop them (namely, we define $\cB:=\cB_{m,d}$).

To check (1), we perform a local analysis of $\cB$ on the stacky point, and we denote by $v':=\frac{v}{u}$. Smooth-locally, the covering stack $\cB'$ on the stacky points is isomorphic to $[\spec(k[\![w,z]\!])/\bmu_{md}]$ with the action given my $\xi*w = \xi w$ and $\xi* z = \xi^{-1}w$ and the map $\spec(k[\![w,z]\!])\to \spec(\cA[v']/\pi^{md} - yv')$ given by $\pi\mapsto wz$ $y\mapsto z^{md}$ and $v'\mapsto w^{md}$. Our stack $\cB$ is obtained by first taking the coarse space of $[\spec(k[\![w,z]\!])/\bmu_{m}]$, which has an action of $\bmu_d \cong \bmu_{md}/\bmu_m$, and then taking the stack quotient of this coarse space by $\bmu_d$. In particular:
\begin{itemize}
    \item the coarse space of $[\spec(k[\![w,z]\!])/\bmu_{m}]$
is isomorphic to $\spec(k[\![\alpha,\beta,\gamma]\!]/\alpha\beta - \gamma^m)$, 
\item the action of $\bmu_d$ leaves $\gamma$ invariant, and a generator $\xi$ of $\bmu_d$ sends $\alpha\mapsto \xi\alpha$ and $\beta \mapsto \xi^{-1} \beta$, and
\item the quotient map $\spec(k[\![\alpha,\beta,\gamma]\!]/\alpha\beta - \gamma^m)\to \spec(R[v']/\pi^{md} - yv') $ sends $\pi\mapsto \gamma$, $y\mapsto \alpha^d$ and $u\mapsto \beta^d$.
\end{itemize}   Therefore the ideal $(\pi)$ gets sent to the ideal of $(\gamma)$, and $\spec(k[\![\alpha,\beta,\gamma]\!]/(\alpha\beta - \gamma^m, \gamma))\cong \spec(k[\![\alpha,\beta]\!]/\alpha\beta)$ is the desired twisted nodal curve.

Points (2) and (3) follow from the construction of $\cB$. Point (4) follows from the explicit description of $\cB$ (or from \cite{GS17}, since the exceptional divisor with the reduced structure is smooth with coarse space $\bP^1$). Point (5) follows from a local analysis: $\cI^{d}$ is the ideal sheaf of the exceptional divisor, and it has corresponding Weil divisor $-(md)E$ where $E$ is the reduced exceptional divisor of $B\to \spec(\cA)$. From the singularities of $B$ one can check that $E^2=-\frac{1}{md}$, so the degree of $\cI^{d}$ restricted to the exceptional is $1$. Then the degree of $\cI$ is $\frac{1}{d}$.
The first part of (6) follows since we have a map $\cO_{\cB}\to \cI^{\otimes k}$ as
$\cI$ is an ideal sheaf, for $k\le 0$. We can push forward this map to get $\phi:\cO_X\to b_*\cI^{\otimes k}$. The sheaf $b_*\cI^{\otimes k}$ agrees with
$\cO_{\spec(\cA)}$ away from the closed point, which has codimension 2, so $(b_*\cI^{\otimes k})^{**}\cong \cO_{\spec(\cA)}$. One can now check that $\psi$ and $\phi$ are inverse to each other. For the second part of (6), so for $k>0$, it suffices to check that for the two sections $\pi^{mk}$ and $y^{\lceil \frac{k}{d}\rceil}$ vanish on the ideal sheaf $\cI^{k}$. Again this can be checked locally on $\cB\smallsetminus \{q\} = B\smallsetminus \{q\} = D(v)$, as $\cI$ is $S_2$. Here the desired statement follows from the explicit description of $D(v)$. 

For the moreover part, observe that if we can put the same action of $\bmu_\ell$ on $y$ also on $u$ (namely, $\xi*u = \xi u$) then this action can be extended to $B$ and therefore also to $\cB'$. As the divisor $\pi^m$ is $\bmu_\ell$-invariant, the action extends also on $\cB$. We now check that the stabilizers are cyclic, by explicitly describing this action on $\cB$. As $\bmu_\ell$ is cyclic and $\cB$ is smooth away from its stacky point, we just check how the action is defined on the stacky point of $\cB$. Recall that smooth locally at this stacky point, $\cB$ is isomorphic to $[\spec(k[\![\alpha,\beta,\gamma]\!]/(\alpha\beta - \gamma^m))/\bmu_d]$, and the action of $\bmu_\ell$
on its coarse space is trivial on $\pi$ and the characters of $y$ and $v'= \frac{v}{u}$ are inverse to each other. We can consider then the action of $\bmu_{\ell d}$ on $\spec(k[\![\alpha,\beta,\gamma]\!])$ that leaves $\gamma$ invariant, and acts faithfully with dual characters on $\beta$ and $\alpha$. One can check that this gives an action of $\bmu_\ell = \bmu_{\ell d}/\bmu_d$ on $[\spec(k[\![\alpha,\beta,\gamma]\!]/(\alpha\beta - \gamma^m)/\bmu_d]$ which extends the desired action: the stabilizers are cyclic as they are a subgroup of $\bmu_{\ell d}$. The commutativity with \'etale base changes follows from the explicit construction of $\cB$.
\end{proof}

\subsection{Lifting line bundles via twisted blow-ups}
We now study the local structure of a nodal singularity that does not smooth over a DVR $R$ with uniformizer $\pi$. 
This will be the pushout of a diagram that has the two branches of the node $B_1$ and $B_2$, together with the inclusions of the singular locus $S\to B_1$ and $S\to B_2$. We adopt the following

\begin{Notation}\label{notation node}
 We denote by $R_1$ and $R_2$ the localizations of two smooth smooth $R$-algebras of dimension 2, so that $B_i=\spec(R_i)$, and the maps $R\to R_i$ are injective. Let $\mathfrak{m}_1=(\pi,x)$ (resp. $\mathfrak{m}_2=(\pi,y)$) be maximal ideal of $R_1$ (resp. $R_2$). As $R_1/(x)\cong R_2/(y)$ are isomorphic to $R$, we identify both with $R:= R_1/x$.
So $S=\spec(R)$ and let $i_1:S\to B_1$ (resp. $i_2:S\to B_2$) be the closed embedding given by $x=0$ (resp. $y=0$). 
\end{Notation}

 We can then form a cocartesian diagram
\[
\begin{tikzcd}
S \ar[r, hookrightarrow] \ar[d, hookrightarrow] & B_1 \ar[d, hookrightarrow] \\
B_2 \ar[r, hookrightarrow] & N
\end{tikzcd}
\]
The scheme $N$ is a (constant) nodal singularity over $\spec(R)$, where $B_i $ are the two branches of the node. We denote $p\in N$ the single node over the closed point of $\spec(R)$.
\begin{Prop}\label{proposition val criterion for the nodes}
With the notation from \Cref{notation node}, let $A$ be a ring and let $\Gm^{n}$ act on $\spec(A)$. Consider the action of the $k$-th $\Gm$ on $\spec(A)$ and the induced $\bZ$-grading on $A$, and let $d\in\mathbb{N}$ be such that the graded ring $A$ is generated in degrees $[-d,d]$. Assume that there is a diagram as follows:
$$\xymatrix{N \smallsetminus \{p\} \ar[r]^-b \ar[d] & [\spec(A)/\Gm^n]\ar[d]\\ N \ar[r]^-{a}  & \spec(A^{\Gm^n}).}$$
By composing with the map $[\spec(A)/\Gm^n]\to \cB\Gm^n$, we have $n$ line bundles on $N\smallsetminus \{p\}$, and assume that the line bundles corresponding to the first $k-1$ factors of $\cB\Gm^n$ lift, but the remaining $n-k-1$ do not.

Then we can perform a birational transformation $\rho:\cN_d\to N$ which:
\begin{itemize}
    \item is an isomorphism on $\rho^{-1}(N\smallsetminus \{p\})$;
    \item $\cN_d\to \spec(R)$ is flat and the special fiber is a nodal twisted curve with 3 irreducible components and a single stacky point $q\in \cN_d$;
    \item the map $\rho$ contracts a single (stacky) curve $\cP^1$ whose coarse space is $\bP^1$;
    \item there is a commutative diagram as follows, where $\alpha$ is representable and we denote again by $p$ the intersection of the double locus of $\cN_d$ and the closed fiber:
    $$\xymatrix{\cN_d\smallsetminus \{p\}\ar[r]^-\alpha \ar[d] & [\spec(A)/\Gm^n]\ar[d]\\ N \ar[r]^-a  & \spec(A^{\Gm^n});}$$
    \item the line bundle corresponding to the $k$-th factor of $\cB\Gm^n$ lifts to a line bundle  on $\cN_d$ with degree $\frac{1}{d}$ on the stacky $\cP^1$, and the line bundles corresponding to the first $k-1$ factors of $\cB\Gm^n$ still lift as well.
\end{itemize} 
The construction of $\cN_d$ commutes with \'etale base change.

Moreover, if there is an action of $\bmu_\ell$ on $R_1$ (resp. $R_2$) which is trivial on $\pi$ (i.e. $\xi*\pi = \pi$) and acts faithfully on $x$ (resp. $y$), such that $a$ and $b$ commute with this action, then we can put an action of $\bmu_\ell$ on $\cN_d$ such that $\alpha$ and $\beta$ are equivariant. This action leaves $\cP^1$ invariant and it acts faithfully on it.
\end{Prop}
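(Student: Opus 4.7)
The plan is to apply \Cref{lemma blow up + covering stack does the job} to one of the branches $B_i$ and to glue the resulting twisted blow-up with the other branch along $S$. First, analyze the Picard data: since each $B_j$ is a local, regular surface, \Cref{prop smooth purity} gives that every line bundle on $N \setminus \{p\}$ restricts to a trivial line bundle on $B_j \setminus \{p_j\}$ extending trivially to $B_j$; hence each $\cL_i$ is determined by its gluing datum in $K(R)^{*} / (R_1^{*} \cdot R_2^{*}|_S) \cong \pi^{\bZ}$, an integer $m_i$ which vanishes iff $\cL_i$ extends to $N$. By hypothesis $m_1 = \cdots = m_{k-1} = 0$ and $m_k \ne 0$; after possibly swapping $B_1 \leftrightarrow B_2$, assume $m := m_k > 0$. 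I then apply \Cref{lemma blow up + covering stack does the job} to $R_2$ with parameters $(m, d)$ to obtain a stack $\cB := \cB_{m,d}$ with contraction $b \colon \cB \to \spec(R_2)$, closed embedding $j \colon S \hookrightarrow \cB$ lifting $i_2$, and line bundle $\cI$ of degree $1/d$ on the exceptional stacky $\cP^1$. I define $\cN_d$ as the pushout $B_1 \sqcup_S \cB$ glued via $i_1$ and $j$, with $\rho \colon \cN_d \to N$ induced by $\operatorname{Id}_{B_1}$ and $b$. Flatness, the three irreducible components of the closed fiber, the single stacky point at the node between the exceptional and the proper transform of the closed fiber of $B_2$, and the contraction of $\cP^1$ to $p$ all follow from \Cref{lemma blow up + covering stack does the job}(1) together with the pushout description.

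For the line bundles: I extend the $k$-th one by gluing $\cO_{B_1}$ on $B_1$ with $\cI$ on $\cB$ along $S$ via the identification $i_1^{*}\cO_{B_1} = \cO_S \cong (\pi^m) R = j^{*}\cI$ guaranteed by \Cref{lemma blow up + covering stack does the job}(3); restricting to $N \setminus \{p\}$ recovers the slope-$\pi^m$ line bundle $\cL_k$, and \Cref{lemma blow up + covering stack does the job}(5) gives degree $1/d$ on the exceptional. For $i < k$, trivial line bundles on $B_1$ and on $\cB$ (the latter extending the given one by \Cref{prop smooth purity}(2)) glue trivially to an extension of $\cL_i$. For $i > k$, the line bundles $\cL_i$ only extend to the smooth locus $\cN_d \setminus \{p\}$ via the reflexive hull.

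The most delicate point is extending the equivariant map. A map $\cN_d \setminus \{p\} \to [\spec(A)/\Gm^n]$ amounts to the $\Gm^n$-torsor above together with, for each homogeneous $a \in A$ of weight $(d_1, \ldots, d_n)$, a section $s_a \in H^0\bigl(\bigotimes_i \cL_i^{\otimes d_i}\bigr)$ compatible with addition and multiplication in $A$. By multiplicativity of sections, it suffices to extend $s_a$ for a finite set of generators of $A$. The hypothesis that $A$ is generated in $k$-th degrees of absolute value at most $d$ enters via \Cref{lemma blow up + covering stack does the job}(6): for a generator of $k$-th degree $d_k > 0$ the required section lands in $b_{*}\cI^{\otimes d_k}$, and the inclusion $(\pi^{m d_k}, y^{\lceil d_k/d \rceil}) \subseteq b_{*}\cI^{\otimes d_k}$ gives enough room to realize the extension; for $d_k \le 0$ one has $b_{*}\cI^{\otimes d_k} = \cO$ and the extension is automatic. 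Representability of $\alpha$ follows since the points of $\cN_d \setminus \{p\}$ have trivial automorphism groups. Finally, for the \emph{moreover} clause, the \emph{moreover} part of \Cref{lemma blow up + covering stack does the job} produces a $\bmu_\ell$-action on $\cB$ that extends the given one on $R_2$ and acts faithfully on $\cP^1$; by construction this action restricts on $S \hookrightarrow \cB$ to the same action used on $S \hookrightarrow B_1$, so the two actions glue to the desired $\bmu_\ell$-action on $\cN_d$.
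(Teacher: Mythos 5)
Your construction of $\cN_d$ — twisted blow-up $\cB_{m,d}$ on one branch, glue to $B_1$ along (the proper transform of) $S$ — is exactly the paper's construction, and your analysis of the gluing data $m_i \in \pi^{\bZ}$ and the extension of the line bundles is essentially right. However, there is a genuine gap in the central step, namely showing that the equivariant map $B_2 \smallsetminus\{p\} \to [\spec(A)/\Gm^n]$ lifts across the twisted blow-up $\cB_{m,d}$.

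You write that "for a generator of $k$-th degree $d_k>0$ the required section lands in $b_*\cI^{\otimes d_k}$, and the inclusion $(\pi^{m d_k}, y^{\lceil d_k/d\rceil}) \subseteq b_*\cI^{\otimes d_k}$ gives enough room to realize the extension." But the whole point is to \emph{prove} that the image of the generator under $\varphi\colon A\to \cO_{T_2}(T_2)=R[\![y]\!][t_1^{\pm1},\dots,t_n^{\pm1}]$ lies in the subalgebra $\bigoplus_j b_*\cI^{\otimes j}\otimes R[\![y]\!][t_1^{\pm1},\dots,\widehat{t_k^{\pm 1}},\dots,t_n^{\pm1}]$, and the inclusion from \Cref{lemma blow up + covering stack does the job}(6) only reduces this to showing that $\varphi(g)$ lies in the ideal $(\pi^{m d_k}, y)$. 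That last fact does not come for free: it uses the gluing compatibility with the \emph{other} branch. Concretely, the maps $\varphi$ on $B_2$ and $\psi$ on $B_1$ are glued over $\spec K(R)$ via the isomorphism $t_k\mapsto \pi^m u t_k$ (with $u\in R^*$), and chasing a degree-$d_k$ generator $g$ through the resulting commutative diagram shows that the $y=0$ specialization of $\varphi(g)$ equals $\pi^{m d_k}$ times a regular element (coming from $\psi(g)|_{x=0}\in R[t^{\pm1}]$), hence $\varphi(g)\in(\pi^{m d_k},y)$. Without this step you have asserted, not proved, that the lift exists; the hypothesis $d_k\le d$ is what makes $\lceil d_k/d\rceil=1$, but it is the $B_1$-side compatibility that forces the $\pi^{m d_k}$ divisibility of the constant-in-$y$ term.

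A smaller point: your remark that "the line bundles $\cL_i$ only extend to the smooth locus $\cN_d\smallsetminus\{p\}$ via the reflexive hull" is misleading, since $\cN_d\smallsetminus\{p\}$ still contains the nodal locus $S\smallsetminus\{p_S\}$; the correct reason those $\cL_i$ are defined on $\cN_d\smallsetminus\{p\}$ is that they were already defined on $N\smallsetminus\{p\}$, and the $k$-th one is extended to the new stacky $\cP^1$ by $\cI$.
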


The stack $\cN_d$ in the statement of \Cref{proposition val criterion for the nodes} is obtained as follows: first, one performs an $(m,d)$-blow up on one of the two branches $B_i$ (for example, $\cB_{m,d}\to B_2$) centered at the point $p$ of the irreducible component $B_2$. Then, one glues $B_1$ and $\cB_{m,d}$ along the proper transform of $S$.
\begin{center}
\includegraphics[scale=0.075]{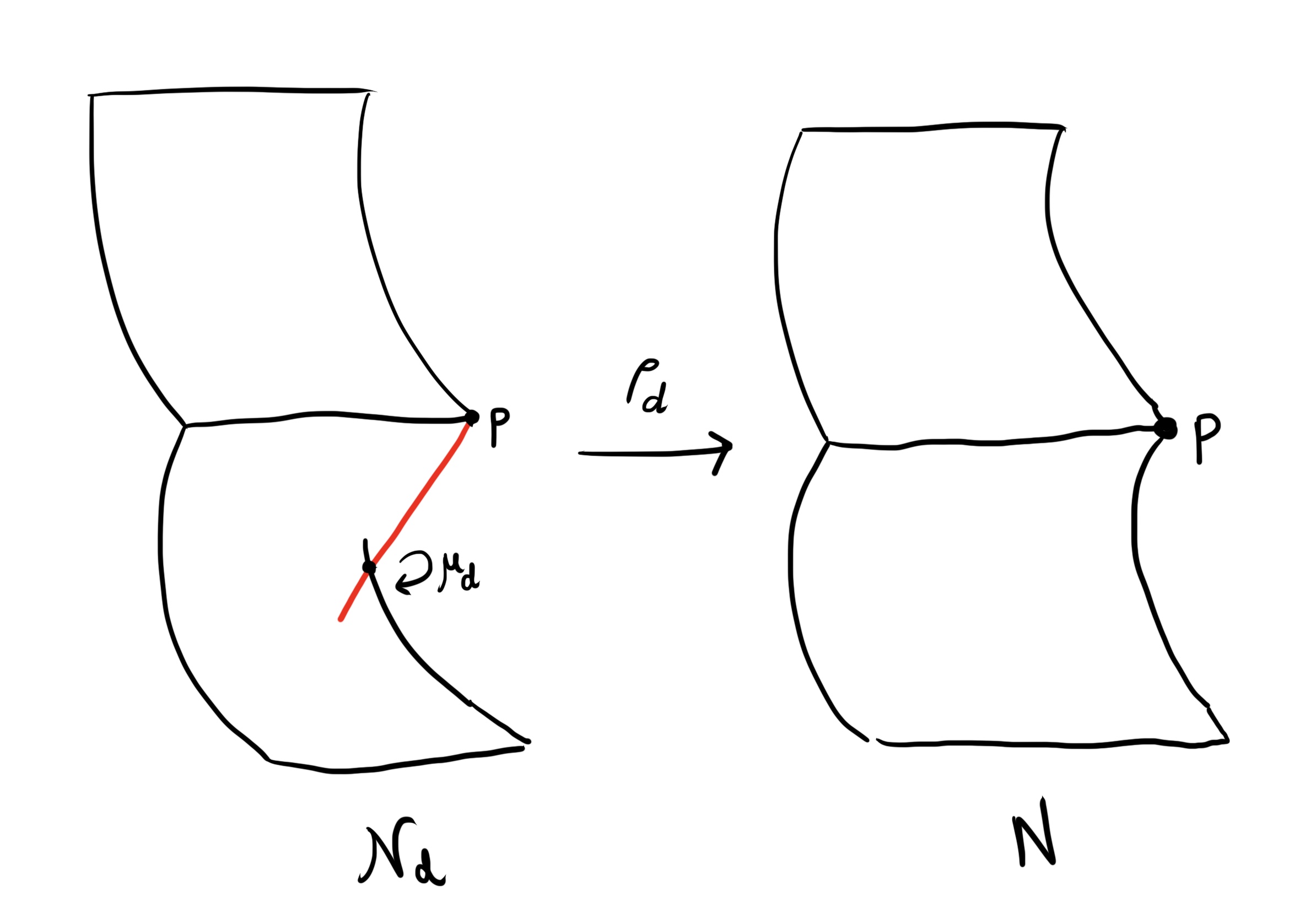} \\ Figure: the stacky surface $\cN_d$.
\end{center}
The integer $d$ depends on the action of the $k^{\rm th}$ factor of $\Gm^n$ on $\spec(A)$, as specified in the statement of \Cref{proposition val criterion for the nodes}. The integer $m$ depends on the map $N\smallsetminus\{p\}\to \cB\Gm^n\to \cB\Gm$, where the last morphism is the projection on the $k^{\rm th}$ factor.

More precisely, the composition above corresponds to a $\Gm$-torsor $T_k\to N\smallsetminus\{p\}$, obtained by gluing the two restrictions ${T_k}_{|B_1\smallsetminus\{p\}}\to B_1$ and ${T_k}_{|B_2\smallsetminus\{p\}}$ along $S\smallsetminus\{p\}=\spec(R_0)$. Such a gluing corresponds to an invertible element $f\in R_0$, and the integer $m$ is obtained by the valuation of $f$ at the closed point of $R$.

\begin{Oss}\label{rmk:extension}
If we can lift \textit{all} the line bundles coming from $[\spec(A)/\Gm^n]\to \cB\Gm^n$ to $N$, then from Proposition \ref{prop smooth purity} we can uniquely lift the map $a$ to $N\to [\spec(A)/\Gm^n]$.
\end{Oss}

\begin{proof}[Proof of \Cref{proposition val criterion for the nodes}]
First observe that if we can find a map $\alpha$ which satisfies all the conditions except for its representability, then we can find a representable one, as we can take its relative coarse moduli space. 
As our construction of $\cN_d$ will commute with \'etale base change, we consider the case $B_1:=\spec(R[\![x]\!])$ and $B_2 :=\spec(R[\![y]\!])$.
We also denote by $\cX:= [\spec(A)/\Gm^n]$.

We begin by analyzing the first diagram of the proposition. As $N\smallsetminus \{p\}$ is a push-out, the data of a map $N\smallsetminus \{p\}\to \cX$ is equivalent to two maps $\phi_i: B_i\smallsetminus \{p\}\to \cX$, and an isomorphism $\phi_1\circ {i_1}_{,\eta} \to \phi_2\circ {i_2}_{,\eta}$, where ${i_j}_{,\eta}$ denotes the restriction of the inclusion $i_j$ to the generic point $\eta$ of $S$. 

From \Cref{prop smooth purity}, the two maps $\phi_i$ extend (we still denote the extensions by $\phi_i$), so in order for $a$ not to lift we need that the isomorphism $\phi_1\circ {i_1}_{,\eta} \to \phi_2\circ {i_2}_{,\eta}$ does not extend. 
More explicitly, the maps $\phi_i$ induce $n$ $\Gm$-torsor $\{T_{i,j}\to B_i\}_{j=1}^n$, which are trivial torsor as $B_i$ is regular and local. When restricted to $S$, these give torsors $(T_{i,j})_{|S}$,
and we have an isomorphism $f_{T,j}:(T_{1,j})_{|S_\eta}\to (T_{2,j})_{|S_\eta}$ for every $j$ when we restrict these torsors to $\eta$, the generic point of $S$. If we denote by $\cL_i$ the line bundle associated to $T_{i,k}$, the map $f_{T,k}$ induces a map $f_\eta:(\cL_1)_{|S_\eta}\to (\cL_2)_{|S_\eta}$. Up to swapping
$f_{T,k}$ with its inverse, we can assume that this extends to a map $f:(\cL_1)_{|S}\to (\cL_2)_{|S}$ which is an isomorphism on the generic point. More explicitly, the map $f_\eta$ induces an isomorphism $(\cL_1)_{|S}\to (\cL_2)_{|S}(-mp)$ where $p$ is the closed point of $S$, for a certain \textit{strictly positive} integer $m$ (positive as the $k$-th line bundle does not extend to $N$).

Now, the map $\phi_2$ induces a map $\varphi: A\to \cO_{T_2}(\cO_{T_2}) = R[\![y]\!][t_1^{\pm1},...,t^{\pm1}_n]$ where $T_2$ is the $\Gm^n$-torsor over $B_2$, and by definition of $d$ the generators of $A$ as a $k$-algebra map into $\cR t^{-d}_k\oplus \cR t^{-d+1}_k\oplus ... \oplus \cR t^d_k$, where $\cR = R[\![y]\!][t_1^{\pm1},...,t_{k-1}^{\pm1},t_{k+1}^{\pm 1},...,t_n^{\pm 1}]$. For these choices of $d$ and $m$, let $b:\cB_{m,d}\to B_2$ be the blow-up of \Cref{lemma blow up + covering stack does the job}, with line bundle $\cI$,
and let $j:S\to \cB_{m,d}$ be the lifting of $S\to B_2$ to $\cB_{m,d}\to B_2$ of \Cref{lemma blow up + covering stack does the job}. Then $\cI_{|S} = (\cL_2)_{|S}(-mp)$, and consider
$\spec_{\cB_{m,d}}(\bigoplus_{j\in \mathbb{Z}}\cI^j)$ be the $\Gm$-torsor associated to $\cI$. From \Cref{lemma blow up + covering stack does the job}, we have an
inclusion $ b_*(\bigoplus_{j\in \mathbb{Z}}\cI^j) = \bigoplus_{j\in \mathbb{Z}}b_*\cI^j\subseteq R[\![y]\!][t^{\pm i}]$ and since $R[\![y]\!][t_1^{\pm1},...,t_{k-1}^{\pm1},t_{k+1}^{\pm 1},...,t_n^{\pm 1}]$ is a flat
$R[\![y]\!]$-algebra, an inclusion $$ \bigoplus_{j\in \mathbb{Z}}b_*\cI^j\otimes _{R[\![y]\!]}R[\![y]\!][t_1^{\pm1},...,t_{k-1}^{\pm1},t_{k+1}^{\pm 1},...,t_n^{\pm 1}] \subseteq R[\![y]\!][t_1^{\pm1},...,t_n^{\pm 1}]. $$
We interpret both terms in this equality. The right hand side is $\cO_{T_2}(T_2)$: the regular functions on the total space of the $\Gm^n$-torsor over $B_2$.
The left hand side are the regular functions on the total space of the $\Gm^n$-torsor over $\cB_{m,d}$ that on the $j$-th component, for $j\neq k$, is the pull back of $T_{2,j}$, and on the $k$-th component is the torsor associated to the line bundle $\cI$ of Lemma \ref{lemma blow up + covering stack does the job}. We denote this torsor by $\cT$.

Then the situation is as in the following diagram:
$$\xymatrix{ & \cO_\cT(\cT)=\bigoplus_{j\in \mathbb{Z}}b_*\cI^j\otimes _{R[\![y]\!]}R[\![y]\!][t_1^{\pm1},...,t_{k-1}^{\pm1},t_{k+1}^{\pm 1},...,t_n^{\pm 1}]  \ar@{^{(}->}[d] \\ A\ar[r]^-{\varphi} & R[\![y]\!][t_1^{\pm1},...,t_n^{\pm 1}]. }$$
\textit{Claim. } The map $\varphi:A\to R[\![y]\!][t_1^{\pm1},...,t_n^{\pm 1}]$ lifts to $A\to \cO_\cT(\cT)$.

Observe that this would finish the argument. Indeed, we would have a graded map $b^*A\to \cO_\cT(\cT)$, so an equivariant map $\cT \to \spec(A)$. This in turns induces a map $\cB_{m,d}\to \cX$ such that when restricted to $S$ (which embeds in $\cB_{m,d}$ via $j$) gives a $\Gm^n$-torsor whose $k$-th line bundle is $(\cL_2)_{|S}(-mp)$, and such that the isomorphism $f_{T,k}$ extends.

\emph{Proof of the claim.}
Let $g$ be a generator of $A$ in degree $k$. Observe the following commutative diagram, where $K(R)$ is the fraction field of $R$, $\pi$ is the uniformizer of $R$ and $u$ a unit of $R$:
$$\xymatrix{ & \cO_{T_2}(T_2) = R[\![y]\!][t_1^{\pm1},...,t_n^{\pm 1}] \ar@{^{(}->}[dr]\ar[rr]^-{y = 0} & & R[t_1^{\pm1},...,t_n^{\pm 1}] \ar@{^{(}->}[dr]& \\
& & K(R)[\![y]\!][t_1^{\pm1},...,t_n^{\pm 1}]\ar[rr] & & K(R)[t_1^{\pm1},...,t_n^{\pm 1}]\\
A\ar[ruu]^-\varphi \ar[rdd] & & & &\\
& & K(R)[\![x]\!][t_1^{\pm1},...,t_n^{\pm 1}]\ar[rr] & & K(R)[t_1^{\pm1},...,t_n^{\pm 1}]\ar[uu]_{t_k\mapsto \pi^m u t_k}\\
& \cO_{T_1}(T_1) = R[\![x]\!][t_1^{\pm1},...,t_n^{\pm 1}] \ar@{^{(}->}[ur]\ar[rr]^-{x = 0} & & R[t_1^{\pm1},...,t_n^{\pm 1}]. \ar@{^{(}->}[ur]& 
}$$
If we chase the generator $g$ via the diagram above, we see that $\varphi(g)\in (\pi^{mk},y)$. In particular, from \Cref{lemma blow up + covering stack does the job} points (6), every generator of $A$ maps via $\varphi$ to $\bigoplus_{j\in \mathbb{Z}}b_*\cI^j[\![y]\!][t_1^{\pm1},...,t_{k-1}^{\pm1},t_{k+1}^{\pm 1},...,t_n^{\pm 1}]$. Therefore the map $\varphi$ lifts.

The moreover part follows as above, from \Cref{lemma blow up + covering stack does the job}.
\end{proof}
\subsection{Iterated twisted blow-ups} 
 We end this Section with a mild generalization of \Cref{proposition val criterion for the nodes}, which can be understood as follows. In the proof of \Cref{main theorem intro} we will use \Cref{proposition val criterion for the nodes} iteratively, to lift each $\Gm$-torsors associated to $N\smallsetminus\{p\}\to[\spec(A)/\Gm^n]\to \cB\Gm^n$; and once we will lift all the $\Gm$-torsors, we can lift the map from \Cref{rmk:extension}. Doing so will not represent any new significant challenge, however there are some technical aspect that we prefer to isolate from the proof of \Cref{main theorem intro} in order to simplify its exposition. This is why we introduce \Cref{cor:val criterion for the nodes iterated}.

 With the same notation as in \Cref{notation node}, let $\cB_2^{(1)}\to B_2$ be a twisted blow up of $B_2$ at $p$: observe that in $\cB_2^{(1)}$ we have a distinguished point that lies in the intersection of the proper transform of $S$ with the exceptional divisor. With a little abuse of notation, we call such a point $p$ and we denote the proper transform of $S$ in $\cB_2^{(1)}$ by the same name.

By construction $\cB_2^{(1)}$ is a Deligne-Mumford stack and
\begin{enumerate}
    \item there is a Zariski open morphism $\cB_2^{(1)}\smallsetminus\{p\}\hookrightarrow \cB_2^{(1)}$;
    \item we can take an \'etale neighbourhood $\cB_{2,p}^{(1)}$ of $p$ in $\cB_2^{(1)}$, and we can apply \Cref{lemma blow up + covering stack does the job} to obtain a stack $\cB_{2,p}^{(2)}\to\cB_{2,p}^{(1)}$;
    \item The blow-ups constructed \'etale locally glue (from the last part of \Cref{lemma blow up + covering stack does the job}) and give a diagram as below
    \[
    \begin{tikzcd}
    \cB^{(1)}_{2,p} \smallsetminus \{p\} \simeq \cB_{2,p}^{(2)}\smallsetminus E \ar[r] \ar[d] & \cB_{2,p}^{(2)} \ar[d] \\
    \cB_2^{(1)}\smallsetminus\{p\} \ar[r] & \cB^{(2)}_2.
    \end{tikzcd}
    \]
\end{enumerate}
The stack $\cB^{(2)}_2$ is a Deligne-Mumford stack, flat over $\spec(R)$, that is isomorphic to $(B_2)|_{\eta}$ over the generic point $\eta$ of $\spec(R)$ and whose fiber over the closed point of $\spec(R)$ consists of a chain of two stacky $\bP^1$'s and an open subset of a curve, meeting in two stacky points with cyclic stabilizers, as in the figure below.

Again, we can consider the proper transform of $S$ in $\cB_2^{(2)}$: this meets the new exceptional divisor in a (schematic) point $p$, so we can apply again points (1)-(3) above.

Doing this operation $r$ times, we obtain an \emph{iterated twisted blow up} $\cB_2^{(r)}$: this is flat over $\spec(R)$, it is isomorphic to ${B_2}_{|\eta}$ over the generic point, and the special fiber consists of a chain of $r$ stacky $\bP^1$'s with twisted nodes.

We can glue back $\cB_2^{(r)}$ to $B_1$ along $S$: we denote the resulting stack by $\cN^{(r)}$.
\begin{center}
\includegraphics[scale=0.07]{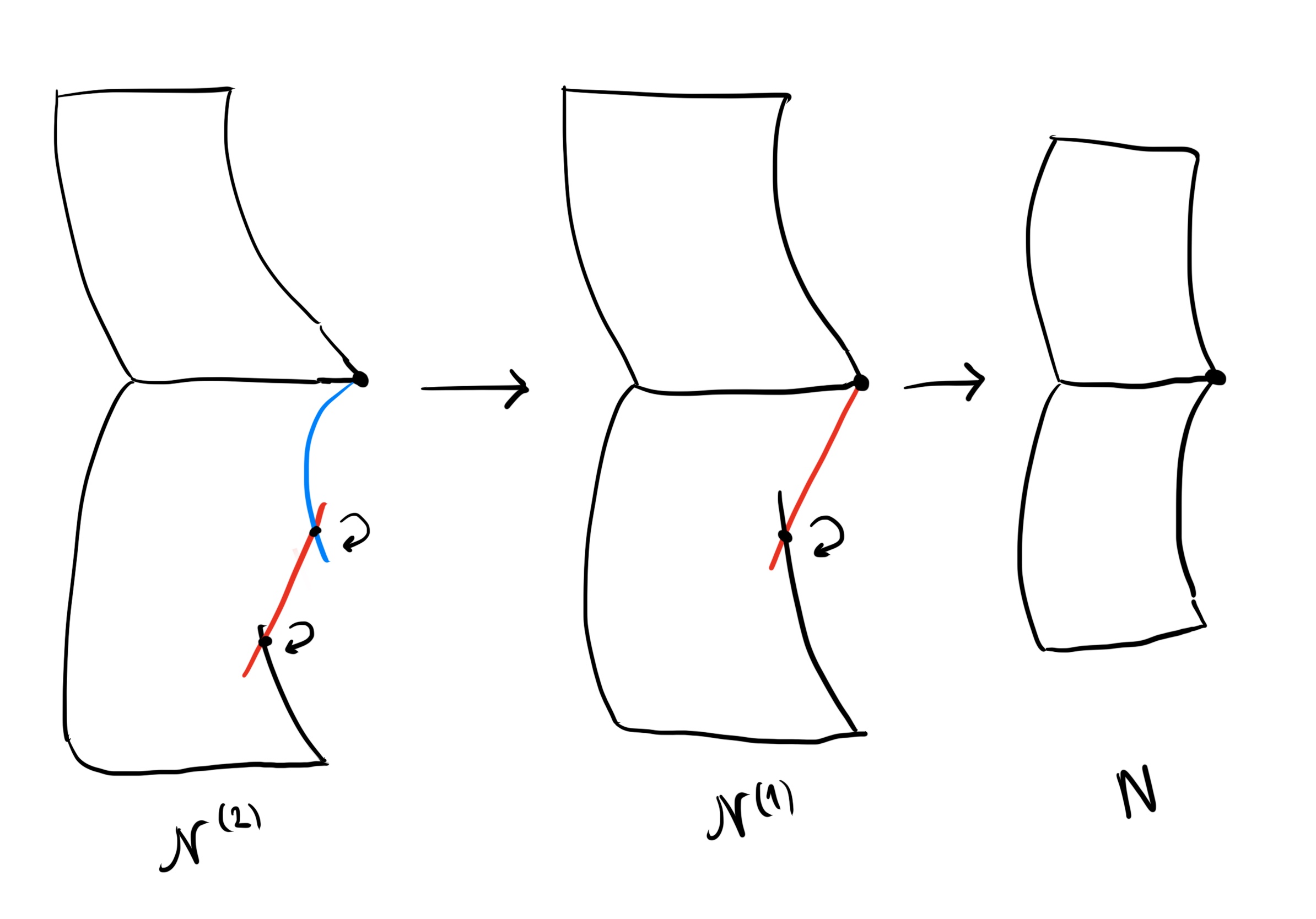} \\ Figure: an iterated twisted blow-up.
\end{center}
\begin{Cor}\label{cor:val criterion for the nodes iterated}
Let $\spec(A)$ be an affine scheme endowed with a $\Gm^n$ action as in \Cref{proposition val criterion for the nodes}. Let $\cN^{(r)}$ be a stack obtained by performing an iterated twisted blow up on $N$, as explained above. 
Assume that there is a diagram as follows:
$$\xymatrix{\cN^{(r)} \smallsetminus \{p\} \ar[r]^-b \ar[d] & [\spec(A)/\Gm^n]\ar[d]\\ N \ar[r]^-{a}  & \spec(A^{\Gm^n}).}$$
By composing with the map $[\spec(A)/\Gm^n]\to \cB\Gm^n$, we have $n$ line bundles on $\cN^{(r)}\smallsetminus \{p\}$, and assume that the line bundles corresponding to the first $k-1$ factors of $\cB\Gm^n$ lift, but the remaining $n-k-1$ do not.

Then we can perform a birational transformation $\rho:\cN^{(r+1)}\to \cN^{(r)}$ which:
\begin{itemize}
    \item is an isomorphism on $\rho^{-1}(\cN^{(r)}\smallsetminus \{p\})$;
    \item $\cN^{(r+1)}\to \spec(R)$ is flat and the special fiber is a nodal twisted curve made of $r+1$ irreducible components which are stacky $\bP^1$'s, and $r$ twisted nodes;
    \item the map $\rho$ contracts a single (stacky) curve $\cP^1$ whose coarse space is $\bP^1$;
    \item there is a commutative diagram as follows, where $\alpha$ is representable and we denote again by $p$ the intersection of the proper transform of $S$ with the closed fiber:
    $$\xymatrix{\cN^{(r+1)}\smallsetminus \{p\}\ar[r]^-\alpha \ar[d] & [\spec(A)/\Gm^n]\ar[d]\\ N \ar[r]^-a  & \spec(A^{\Gm^n});}$$
    \item the line bundle corresponding to the $k$-th factor of $\cB\Gm^n$ lifts to a line bundle  on $\cN^{(r+1)}$ with degree $\frac{1}{d}$ on the stacky $\cP^1$, and the line bundles corresponding to the first $k-1$ factors of $\cB\Gm^n$ still lift as well.
\end{itemize} 

The construction of $\cN^{(r+1)}$ commutes with \'etale base change.

\end{Cor}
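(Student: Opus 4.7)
The plan is to reduce this corollary directly to Proposition~\ref{proposition val criterion for the nodes} by working étale-locally around the point $p\in\cN^{(r)}$, leveraging the étale-base-change compatibility already built into the earlier constructions.

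The key observation I would rely on is that, by the inductive construction of the iterated twisted blow up recalled just before the statement, the distinguished point $p\in\cN^{(r)}$ is a \emph{schematic} point at which two smooth $2$-dimensional surface branches meet along the smooth curve given by the proper transform of $S$. One branch is an open of $B_1$; the other is an open of $\cB_2^{(r)}$ that is schematic near $p$, since the stacky locus of $\cB_2^{(r)}$ sits at the twisted nodes of the chain of exceptional $\bP^1$'s and not at $p$ itself. Consequently, there is an étale neighbourhood $U\to\cN^{(r)}$ of $p$ of exactly the form described in Notation~\ref{notation node}, namely $U\simeq\spec R_1\sqcup_{\spec R}\spec R_2$ with $R_1,R_2$ smooth $2$-dimensional local $R$-algebras and $S$ cutting out the double locus.

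I would then apply Proposition~\ref{proposition val criterion for the nodes} to the restriction of $a$ to $U$, obtaining a local twisted blow up $\cN_d\to U$ together with the desired lift of the $k$-th line bundle. Because the construction of $\cN_d$ commutes with étale base change (a property explicitly inherited from Lemma~\ref{lemma blow up + covering stack does the job}), these local models glue canonically to a global birational modification $\cN^{(r+1)}\to\cN^{(r)}$ that is an isomorphism away from $p$. Gluing the new lift on $\cN_d$ with the unchanged old lift on $\cN^{(r)}\smallsetminus\{p\}$ produces the extension $\alpha$. Representability of $\alpha$, the value $1/d$ of the degree of the $k$-th line bundle on the new stacky $\cP^1$, flatness over $\spec R$, and the preservation of the first $k-1$ lifts are then inherited directly from Proposition~\ref{proposition val criterion for the nodes}. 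The description of the new special fiber — one more stacky $\cP^1$ and one more twisted node than in $\cN^{(r)}$, together with a new schematic point where the proper transform of $S$ meets this new $\cP^1$, which serves as the distinguished $p$ for the next inductive step — follows from item (1) of Lemma~\ref{lemma blow up + covering stack does the job}.

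The only real content beyond Proposition~\ref{proposition val criterion for the nodes} is verifying that $p\in\cN^{(r)}$ has the étale-local structure of a schematic node between two smooth surfaces, and this is precisely what the inductive construction of the iterated twisted blow up is designed to maintain; I therefore expect no new technical obstacle to appear.
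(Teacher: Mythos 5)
Your proposal follows essentially the same route as the paper: both reduce to Proposition~\ref{proposition val criterion for the nodes} by working étale-locally at the (correctly identified) schematic node $p\in\cN^{(r)}$, constructing the new blow-up locally, and invoking the étale-base-change compatibility built into Lemma~\ref{lemma blow up + covering stack does the job} to descend. Your observation that near $p$ one has exactly two smooth $2$-dimensional branches glued along a smooth curve, so that Notation~\ref{notation node} applies verbatim, is the same reduction the paper makes when it writes $\cN^{(r)}$ as $\cB_1^{(r)}$ glued to $\cB_2^{(r)}$ along $S$ and blows up $\cB_2^{(r)}$ at $p$.

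There is one place where your argument is lighter than it should be: the phrase ``Gluing the new lift on $\cN_d$ with the unchanged old lift on $\cN^{(r)}\smallsetminus\{p\}$ produces the extension $\alpha$.'' Commutativity of the blow-up construction with étale base change gives you the glued birational modification $\cN^{(r+1)}$, but it does not by itself yield the glued \emph{map} $\alpha$ (equivalently the glued line bundle for the $k$-th factor). You have the line bundle on the étale neighbourhood $\cN^{(r+1)}_p$ and on the open $\cN^{(r+1)}\smallsetminus E \simeq \cN^{(r)}\smallsetminus\{p\}$, and you must argue that these two actually patch to a line bundle on all of $\cN^{(r+1)}$. This is not automatic descent data because the two pieces are not an étale cover by objects on which the line bundle is already defined; what the paper does is first record that the extension produced by Proposition~\ref{proposition val criterion for the nodes} on $\cN^{(r+1)}_p$ is \emph{unique}, and then invoke Lemma~\ref{lemma estensere line bundles etale locally suffices} (the $S_2$ pushforward argument) to conclude that a line bundle defined away from $p$ and extending to an étale neighbourhood of $p$ extends globally. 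Once you have the $k$-th line bundle on all of $\cN^{(r+1)}$ (together with the first $k-1$, which already lifted), Remark~\ref{rmk:extension} provides the lift $\alpha$ to $[\spec(A)/\Gm^n]$. You should add these two points — uniqueness of the local extension and the $S_2$ gluing lemma — to make the final gluing step watertight; with that, your proof matches the paper's.
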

\begin{Oss'}
The stack $\cN^{(r+1)}$ in the statement of \Cref{proposition val criterion for the nodes} is obtained by performing an $(m,d)$-blow up. As in \Cref{proposition val criterion for the nodes}, the integers $m$ and $d$ are determined by the morphism $N\smallsetminus\{p\} \to [\spec(A)/\Gm^n]$.
\end{Oss'}
The proof of \Cref{cor:val criterion for the nodes iterated} essentially relies on the following.

\begin{Lemma}\label{lemma estensere line bundles etale locally suffices}
Assume that $\cX$ is a DM stack which is $S_2$ and of dimension 2, and let $p$ be a closed point and $U$ its complement. Assume that one has a line bundle $L$ on $U$, that extends to a line bundle on an \'etale neighbourhood $j:V\to \cX$ of $p$. Then it extends to a line bundle on $\cX$.
\end{Lemma}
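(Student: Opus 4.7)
The plan is to build the extension as the pushforward $\cF := i_* L$, where $i \colon U \hookrightarrow \cX$ is the open inclusion, and then to verify that $\cF$ is a line bundle by checking this étale-locally at the only problematic point $p$. The restriction $\cF|_U \cong L$ is automatic since $i$ is an open immersion, so the content of the lemma lies in showing that $\cF$ is locally free of rank $1$ in an étale neighbourhood of $p$.

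To do this, I would pull back along the given étale neighbourhood $j \colon V \to \cX$ and use flat base change. Consider the cartesian square
\[
\begin{tikzcd}
j^{-1}(U) \ar[r, hook, "i'"] \ar[d, "j'"'] & V \ar[d, "j"] \\
U \ar[r, hook, "i"'] & \cX.
\end{tikzcd}
\]
Since $j$ is flat and the horizontal arrows are quasi-compact open immersions, flat base change gives $j^*\cF \cong i'_*(j')^* L$. By hypothesis we have a line bundle $\tilde{L}$ on $V$ with $\tilde{L}|_{j^{-1}(U)} \cong (j')^* L$, so $j^*\cF \cong i'_*(\tilde{L}|_{j^{-1}(U)})$.

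Now I would invoke the fact that on an $S_2$ stack of dimension $2$, a coherent reflexive sheaf is the pushforward of its restriction to the complement of any closed subset of codimension $\geq 2$. Since $j$ is étale, $V$ is also $S_2$ of dimension $2$, and the fibre $j^{-1}(p)$ is a finite set of closed points (because $p$ is closed and $j$ is étale), so $V \smallsetminus j^{-1}(U) = j^{-1}(p)$ has codimension $\geq 2$ in $V$. Since $\tilde{L}$ is a line bundle, it is reflexive, so $i'_*(\tilde{L}|_{j^{-1}(U)}) \cong \tilde{L}$. Thus $j^*\cF \cong \tilde{L}$ is a line bundle on $V$. Combined with the fact that $\cF|_U = L$ is a line bundle and that $U \sqcup V \to \cX$ is a jointly surjective étale morphism, we conclude that $\cF$ is a line bundle on all of $\cX$, as being locally free of rank $1$ is an étale-local property.

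The only delicate point is the reflexive-extension step, which is classical for $S_2$ schemes; for a Deligne–Mumford stack one reduces to the scheme case by further étale refinement of $V$, so no new input is needed.
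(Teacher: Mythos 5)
Your proof is correct and follows essentially the same route as the paper's: take $\cF=i_*L$, pull back along the \'etale neighbourhood $j$, and use the $S_2$/reflexive extension property to identify $j^*\cF$ with the given local extension $\tilde L$, concluding by \'etale descent of local freeness. The paper phrases the comparison by directly noting that $j^*i_*L$ is an $S_2$ sheaf agreeing with a line bundle away from codimension $2$, while you make the underlying flat base change $j^*i_*L\cong i'_*(j')^*L$ explicit; the content is the same.
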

\begin{proof}
Up to working on a chart, we can assume that $\cX = X$ is a scheme. If $i:U\to X$ is the inclusion, then $i_*L$ is $S_2$.
The pull-back of $S_2$ sheaves under \'etale morphisms is still $S_2$, so $j^*i_*L$ is $S_2$. As it agrees with a line bundle away from the preimage of $p$, the sheaf $j^*i_*L$ is a line bundle, so its stalk at the preimage of $p$ is one dimensional. So the stalk of $i_*L$ at $p$ is one dimensional, so $i_*L$ is a line bundle as desired.
\end{proof}
\begin{proof}[Proof of \Cref{cor:val criterion for the nodes iterated}]
First, we perform an additional iteration of the twisted blow-up.
Write $\cB_1^{(r)}$ and $\cB_2^{(r)}$ for the two components of $\cN^{(r)}$. We define $\cB_2^{(r+1)}$ by performing an $(m,d)$-blow up of $\cB_2^{(r)}$ centered at $p$ as explained above (the values of $m$ and $d$ and the branch on which to perform the blow up are determined exactly as in \Cref{proposition val criterion for the nodes}), and then we define $\cN^{(r+1)}$ by gluing together $\cB_1^{(r)}$ and $\cB_2^{(r+1)}$ along $S$.

Let $\varphi_k:\cN^{(r)}\smallsetminus\{p\}\to \cB\Gm^n\to \cB\Gm$, where the last map is the projection on the $k^{\rm th}$ factor, and let $\cN_p^{(r)}$ be an \'etale neighbourhood of $p$, so that we have the restricted morphism $\varphi_{k,p}:\cN_p^{(r)}\smallsetminus\{p\}\to \cB\Gm$. Applying \Cref{proposition val criterion for the nodes} we can lift this map to $\cN_p^{(r+1)}$, and this extension is unique. We also have the restriction $\varphi_{k,0}:\cN^{(r)}\smallsetminus\{p\}\to \cB\Gm$, hence we obtain a commutative diagram
$$\xymatrix{\cN^{(r)}_p\smallsetminus\{p\}\simeq \cN^{(r+1)}_p\smallsetminus E \ar[r] \ar[d] & \cN_{p}^{(r+1)} \ar[d] \ar[dr]^{\varphi'_{k,p}} & \\ \cN^{(r)}\smallsetminus\{p\} \ar[r]\ar@/_1.5pc/[rr]_{\varphi_{k,0}}  & \cN^{(r+1)} \ar@{..>}[r]^{\varphi_k'}& \cB\Gm.}$$

The dotted arrow exists on the codimension one points of $\cN^{(r+1)}$ since those are the codimension one points of $\cN^{(r)}$ (where $\varphi_{k,0}$ is defined), and the generic point of the exceptional divisor for the twisted blow-up (where $\varphi_{k}'$ comes from the line bundle $\cI$ of \Cref{lemma blow up + covering stack does the job}). Then the dotted arrow exists from \Cref{lemma estensere line bundles etale locally suffices}. The other statements are straightforward to check using \Cref{proposition val criterion for the nodes}.
\end{proof}

\section{Proof of the existence part of the valuative criterion}\label{section proof val criterion}
This section is devoted to the proof of our main theorem. On a first approximation,  the argument proceeds as follows.  We first deal with the case where the generic fiber is smooth.  For this case, using the properness of Kontsevich spaces,  we find a limit of the map on good moduli spaces. We will lift this to a map to the stack,
up to adding some stacky structure along the nodes of the generic fiber, in two steps. First, we will lift it along the generic points of the special fiber using \Cref{teorema fix}. Then we will use \Cref{prop smooth purity} to extend it along the closed points of the special fiber. 

To deal with a nodal generic fiber, we reduce to the smooth case by taking the normalization of the generic fiber, and we use the fact that a nodal curve is a pushout. Therefore, we first extend the map to the normalization of the generic fiber,  using the case of a smooth generic fiber. After that, using \Cref{proposition val criterion for the nodes}, we show that we can extend the pushout data up to performing some $(m,d)$-blowups. Finally we will use some results from \Cref{subsection contracting rational curves} on contracting (stacky) rational curves to obtain point (4) in \Cref{def twisted map}.

\begin{Teo}\label{theorem existence part val criterion}
Let $R$ be a DVR, let $\eta$ (resp. $x$) be the generic (resp. closed) point of $\spec(R)$. Let $\cM=[X/G]$ be a quotient stack by a linear algebraic group $G$ having a projective good moduli space $M$.  Assume we are given a pointed twisted map $(\pi:\cC_\eta\to \eta, \phi:\cC_\eta\to \cM,\{\sigma_i : \Sigma_{i,\eta} \to \cC_\eta\})$ over $\eta$.

Then, up to replacing $\spec(R)$ with a ramified cover, we can extend the pointed twisted map over $\eta$ to a pointed twisted map $(\pi:\cC_R\to \spec(R), \phi:\cC_R\to \cM,\{\sigma_i : \Sigma_{i,R} \to \cC_R\})$ over $\spec(R)$.
\end{Teo}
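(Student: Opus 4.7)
The plan is to follow the outline already sketched at the start of this section: reduce to the smooth-generic-fiber case by normalization, obtain a Kontsevich limit on the coarse-space side, lift it up to $\cM$ in codimension one using \Cref{teorema fix}, then extend across codimension-two points using \Cref{prop smooth purity} and the iterated twisted blow-up machinery of \Cref{cor:val criterion for the nodes iterated}, and finally use the contraction results of \Cref{subsection contracting rational curves} to enforce condition $(4)$ of \Cref{def twisted map}.

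First I would reduce to the case where $\cC_\eta$ is smooth. Let $\widetilde{\cC}_\eta\to\cC_\eta$ be the normalization, and mark on it the preimages of the twisted nodes together with the gerbes $\Sigma_{i,\eta}$; then $\cC_\eta$ is recovered as a pushout of $\widetilde{\cC}_\eta$ along these marked gerbes. Assuming the theorem in the smooth case gives, after a ramified base change, a twisted extension $\widetilde{\cC}_R\to\cM$. To recover the unnormalized limit we have to glue the pairs of marked gerbes on the special fiber; this is where \Cref{proposition val criterion for the nodes} and its iterated version \Cref{cor:val criterion for the nodes iterated} enter, allowing us after finitely many $(m,d)$-twisted blow-ups on one branch of each node to extend the gluing isomorphism of the $n$ line bundles associated with $\widetilde{\cC}_\eta\to\cM\to \cB\Gm^n$ (after presenting $G\hookrightarrow \GL_N$ and reducing the torus part of a maximal torus, or working directly when $G=\GL_n$ as throughout Section \ref{section technical results}). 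Once all $\Gm$-torsors lift, \Cref{rmk:extension} upgrades the lift to a morphism to $[\spec(A)/\Gm^n]$, and then an equivariant descent/GIT argument upgrades this to a morphism to $\cM=[X/G]$.

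So the core is the smooth generic fiber case. Here I would first form the induced map $f_\eta:C_\eta\to M$ on coarse spaces and invoke the properness of Kontsevich's moduli of stable maps to $M$ (after a ramified base change of $R$) to extend it to a stable map $f:C\to M$ over $\spec(R)$, with $C\to\spec(R)$ a semistable family of curves. Let $\mu_1,\dots,\mu_r$ be the generic points of the special fiber $C_x$. Around each $\mu_j$, the local ring $\cO_{C,\mu_j}$ is a DVR which comes from a family of curves in the sense of \Cref{def comes from family of curves}; \Cref{teorema fix}, applied to a \'etale presentation $[\spec(A)/\GL_N]\to\cM$ together with the universally closed morphism $[\spec(A)/\GL_N]\to M$ (coming from properness of the good moduli space $\cM\to M$), then produces, up to an extension $R\to R'$, a lift $\spec(\cO_{C,\mu_j})\to \cM$. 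These lifts are compatible with $\phi_\eta$ at the generic point, so the result is a map from an open $U\subset C_R$ containing $C_\eta$ and all $\mu_j$ to $\cM$, missing only a finite set $Z$ of closed points (the nodes of $C_x$ and the images of the marked sections).

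Now I must extend $U\to\cM$ across each point $p\in Z$. If $p$ is a smooth point of $C_x$ (a marked point), \Cref{prop smooth purity} applies on the smooth surface $C_R$ (after adding the stacky structure coming from the gerbes $\Sigma_{i,\eta}$ specializing to $p$), giving the extension directly. If $p$ is a node, then after \'etale localization $C_R$ looks like the $N$ of \Cref{notation node}; here the map into $\cM$ need not extend because some of the $\Gm$-torsors coming from the composition with $\cM\to \cB G$ fail to extend across $p$. I apply \Cref{cor:val criterion for the nodes iterated} iteratively, one $\Gm$-factor at a time, replacing $C_R$ by a sequence of twisted blow-ups; after finitely many steps every $\Gm$-torsor extends, \Cref{rmk:extension} upgrades the situation to a $[\spec(A)/\Gm^n]$-valued map, and a final equivariant argument (using that $G$ is reductive and $\cM\to M$ is a good moduli space) yields the desired map to $\cM$. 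The twisted nodes created in this process are exactly those allowed by \Cref{def twisted map}.

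The hardest step, and where I expect the main difficulty, is guaranteeing condition $(4)$ of \Cref{def twisted map}: after all these iterated twisted blow-ups, the special fiber will in general be a longer chain of stacky rational curves than necessary, and some of the extra components may map to $\cM$ through $\cB\bmu_d$, violating minimality. To fix this I would invoke the contraction results announced in \Cref{subsection contracting rational curves}: any destabilizing stacky $\cP^1$ whose map to $\cM$ factors through $\cB\bmu_d$ maps to a closed point of $M$ and has ``negative enough'' self-intersection to be contractible in the sense of Artin, generalized to the stacky setting there. Contracting successively all such offending components (and checking that the ambient twisted curve remains nodal and the remaining map still descends) produces the final twisted map satisfying all four conditions of \Cref{def twisted map}. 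Ensuring that the contractions can be performed in families over $\spec(R)$, and that they preserve both the nodal-twisted-curve structure and the good-moduli-space target, is the technical heart of the argument.
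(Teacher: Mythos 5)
Your overall strategy matches the paper's — normalize, extend each component, glue back via twisted blow-ups at the non-smoothable nodes, contract via \Cref{theorem contracting stacky P1s} — but the execution of the "smooth generic fiber" step has two concrete problems.

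First, you claim to handle the smooth generic fiber case uniformly by "invoking the properness of Kontsevich's moduli of stable maps to $M$." This fails when a component of the normalization is a destabilizing $\cP^1$: its coarse map $(\bP^1,p_1,p_2)\to M$ is constant, so $\Aut(\bP^1,p_1,p_2)\cong\Gm$ is not finite, and the induced pointed map to $M$ is \emph{not} a Kontsevich stable map. The paper isolates this as Case 2 and handles it directly by taking the trivial $\bP^1$-bundle, choosing an automorphism to keep the two sections disjoint in the limit, and then proceeding as in Case 1. Your argument has no fallback for this case.

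Second, inside what you call the "core" smooth case you assert that around a node $p$ of the special fiber "$C_R$ looks like the $N$ of \Cref{notation node}" and propose to apply \Cref{cor:val criterion for the nodes iterated}. This is incorrect. When $\cC_\eta$ is smooth, the nodes of the Kontsevich limit $C_x$ are smoothable, and \'etale-locally the total space $C_R$ is an $A_{n-1}$ singularity $\spec(k[\![x,y,\pi]\!]/(xy-\pi^n))$ — an \emph{isolated} surface singularity, not the one-dimensional gluing singularity $N=B_1\sqcup_S B_2$ of \Cref{notation node}. The twisted blow-up propositions are formulated precisely for $N$-type pushouts and do not apply here; the correct move, as in the paper's Case 1, is to pass to the canonical covering stack $\cC''_R\to\cC'_R$ (smooth, since the root stack of $C_R$ has cyclic quotient singularities) and then invoke \Cref{prop smooth purity} to extend across the finitely many closed points, taking a relative coarse moduli space at the end. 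The $N$-type singularities — and hence the need for \Cref{proposition val criterion for the nodes} / \Cref{cor:val criterion for the nodes iterated} — arise only at the non-smoothable nodes created when gluing the normalization back together, not inside the smooth case.

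Finally, you gesture at "reducing the torus part of a maximal torus" to pass from a $\GL_n$-torsor to $n$ $\Gm$-torsors near a node, but this is a non-trivial step: one has to show the vector bundle on $\cN_i\smallsetminus\{p_i\}$ splits as a sum of line bundles compatibly with the gluing along $S$. The paper does this by applying Smith normal form to the gluing matrix over the DVR $R$ and then extending the change-of-basis matrices to the two branches $B_j$. Without something like this the reduction to $\Gm^n$-torsors (and hence the invocation of \Cref{cor:val criterion for the nodes iterated} and \Cref{rmk:extension}) is not justified.
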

\begin{proof}
First observe that we can assume $G=\GL_n$: indeed, by hypothesis we have $G\hookrightarrow \GL_n$, hence $\cM=[X/G]=[Y/\GL_n]$, where $Y$ is the quotient of $X\times\GL_n$ by $G$.

Let $(\pi:\cC_\eta\to \eta, \phi:\cC_\eta\to \cM,\{\sigma_i : \Sigma_{i,\eta} \to \cC_\eta\})$ be a family of pointed twisted curves, with coarse space $C_\eta$. Let $\Sigma_\eta := \bigcup \Sigma_{i,\eta}$ be the extra stacky points and let $S_\eta $ be its coarse moduli space.

\begin{bf}Case 1: \end{bf} $\cC_\eta$ is smooth and its coarse space is \textit{not} a destabilizing $\bP^1$.

Then the resulting map $(C_\eta, S_\eta)\to M$ is a stable map. From properness of Kontsevich stable maps, it can be extended to a map $(C_R, S_R)\to M$ over $\spec(R)$, up to replacing $R$ with a ramified cover of it. We perform a root stack $\cC'_R\to C_R$, ramified over $S_R$, so that its restriction to the generic fiber $(\cC'_R)_{|\eta}$ is isomorphic to $\cC_\eta$.

We first extend the map $(\cC'_R)_{|\eta}\to \cM$ to the generic points of the special fiber. If $\xi$ is one such generic point, $\cO_{\cC'_R,\xi}$ is a DVR and the induced map $\spec(\cO_{\cC'_R,\xi})\to\spec(R)$ comes from a family of curves, in the sense of \Cref{def comes from family of curves}. Moreover, as $\cM=[X/G]$ is a quotient stack, it has a Zariski open cover given by substacks of the form $[\spec(A)/G]$ and hence we have a factorization
\[
\xymatrix{
    & \left[\spec(A\otimes_{A^G}\cO_{\cC'_R,\xi})/G\right]\ar[r] \ar[d]^{\mu'} & \left[\spec(A)/G\right] \ar[r] \ar[d]^{\mu}& \cM \ar[d] \\\spec(\cO_{\cC_\eta})\ar[r] \ar[ru]&
    \spec(\cO_{\cC'_R,\xi}) \ar[r] & \spec(A^G) \ar[r] & M}
\]
where $\mu$ (and therefore also $\mu'$) is universally closed from \cite[Theorem A.8]{AHH}.

Therefore \Cref{teorema fix} applies, and we have a map $\spec(\cO_{\cC'_R,\xi})\to \cM$ extending our map on $(\cC'_R)_\eta$. This allows us to extend the map $\cC'_\eta \to \cM$ to the generic points of the special fiber of $\cC'_R\to \spec(R)$.  We can now spread out this map: up to removing a finite set of points $\{p_1,...,p_n\}$ of the special fiber,  we get an extension $\cC_R'\smallsetminus \{p_1,...,p_r\}\to \cM$. 

Observe that $\cC'_R$ has cyclic quotient singularities, so we can take its covering stack $\cC''_R\to \cC'_R$. We show now that we can extend the map to $\cC''_R\to \cC'_R$ in a unique way. 

As the extension will be unique, we can replace $\cC''_R$ to the strict henselization of the local ring at the points $p_i$, that we denote by $\cR_i$.
The map $\spec(\cR_i)\to M$ lifts to $\spec(\cR_i)\to \spec(A^G)$, and similarly $\spec(\cR_i)\smallsetminus\{p\}\to \cM$ lifts to $\spec(\cR_i)\smallsetminus \{p\}\to [\spec(A)/G]$. As $\cC''_{\cR}$ is smooth, $\spec(\cR_i)$ is smooth. So from \Cref{prop smooth purity} we can extend this map uniquely to $\spec(\cR_i)\to \cM$. This gives the extension to a map $\cC''_{\cR}\to \cM$, and now we can take $\cC_{\cR}\to \cM$ to be its relative coarse moduli space. One can check that this procedure gives a pointed twisted stable map over $\spec(R)$.

\begin{bf}Case 2: \end{bf} $(\cC_\eta, \Sigma_1,\Sigma_2)$ is such that its coarse space is a destabilizing $\bP^1$.

The coarse space of $\cC_\eta$ is $\bP^1_\eta$, and the two gerbes give two points $p_1,p_2$ in $\bP^1_\eta$. Those are sections of $\cP^1_\eta\to \eta$, so
we have a map $\eta\to M$, and in turn as $M$ is proper, a map $\spec(R)\to M$. We can extend $(\bP^1_\eta, p_1,p_2)\to\eta$, as $\bP^1_\eta = \operatorname{Proj}(K(R)[X_0,X_1])$, to the trivial family over $\spec(R)$, to get $(\bP^1_R,S_1,S_2)$, where $S_1$ and $S_2$ are obtained by taking the closure of $p_1$ and $p_2$. Observe that, up to acting on $\bP^1_ \eta$ via $\operatorname{Aut}((\bP^1_\eta,p_1,p_2))\simeq \Gm(K(R))$, we can assume that $S_1$ and $S_2$ remain disjoint in the limit.

If $\bmu_{k_i}$ is the stabilizer of $\Sigma_i$, we can take a root stack of $\bP^1_R$
along $S_1$ and $S_2$ with orders $k_1$ and $k_2$ respectively. If we denote this root stack by $\cP^1_R$, then $\cP^1_\eta \cong \cC_\eta$, and we have the following diagram of solid arrows:

$$\xymatrix{\cP^1_\eta \ar[d]\ar[r]\ar@/^1pc/[rr] & \cP^1_R\ar[d]\ar[rd] \ar@{..>}[r]& \cM\ar[d]\\ \eta\ar[r] & \spec(R)\ar[r]& M.}$$
We now proceed as in the previous case and we obtain the desired extension.

\begin{bf}General case:\end{bf} We now assume that $(\cC_\eta,\Sigma_\eta)\to \cM$ is a family of twisted pointed curves over $\eta$. Let $(\cC_\eta^n,\Sigma_\eta + \Theta_\eta)\to \cM$ be the normalization of $\cC_\eta \to \eta$, where $\Theta_\eta$ is the preimage of the nodal points. Up to replacing $R$ with a cover of it, we can assume that the connected components of $\Theta_\eta$ are gerbes over $\eta$. Therefore from cases (1) and (2) we can extend the twisted pointed map $(\cC_\eta^n,\Sigma_\eta + \Theta_\eta)\to \cM$ over $\spec(R)$ to get maps $((\cC')^n_R,\Sigma_R + \Theta_R)\to \cM$. 

Let $\Theta_{i,\eta}$ and $\Theta_{i,R}$ be the connected components of $\Theta_\eta$ and $\Theta_R$, indexed by $i\in I$. Recall that $\cC_\eta$ is obtained from $\cC_\eta^n$ via the gluing data $\Theta_{i,\eta}\simeq \Theta_{\sigma(i),\eta}$, where $\sigma:I\to I$ is an involution with no fixed points. By construction, the gerbes $\Theta_{i,R}$ are obtained as root stacks over their image in the coarse space of $(\cC')^n_R$; from this, it follows easily that the gluing data over the generic point extend to isomorphisms $\Theta_{i,R}\simeq \Theta_{\sigma(i),R}$, and therefore from $(\cC')^n_R$ and the extended gluing data we can obtain a new family of curves $\cC'_R$. We summarize now the situation in the diagram of solid arrows below:

$$\xymatrix{\cC_\eta^n \ar[d]\ar[r] & (\cC')^n_R\ar[r] \ar[d] & \cM\ar[d] \\
\cC_\eta \ar[r] \ar[d] \ar[rru] & \cC'_R\ar[r] \ar@{..>}[ru]_\phi \ar[d] & M\\\eta \ar[r] & \spec(R). & }$$
We wish to extend the dotted arrow $\phi$. Observe that $\phi$ is already defined away from the one dimensional nodes of $\cC'_R$, hence we just need to extend it at the nodes. Indeed, if the node is an isolated singularity, then we can apply \Cref{prop smooth purity} to extend the map on the node. Otherwise, if the node is not an isolated singularity, we cannot in general extend the map (see \Cref{subsubsection P1s are necessary}). 
Nevertheless, we will show that the map can be extended after performing some twisted blow-ups as in \Cref{cor:val criterion for the nodes iterated}.


Therefore, let us fix a node $\Theta_{i,R}$ which is not an isolated singularity of $\cC'_R$. Let then $T_{i,R}\to C'_R$ be the map $\Theta_{i,R}\to \cC'_R$ on coarse spaces. As $T_{i,R}\to \spec(R)$ is an isomorphism, we have a section $s:\spec(R)\to T_{i,R}\to M$. As $\cM=[X/\GL_n]$ is a quotient stack, there is a ring $A$ with an action of $\GL_n$ on it such that there is a Zariski open morphism $\spec(A^{\GL_n})\to M$ and an isomorphism $\spec(A^{\GL_n})\times_M\cM\cong [\spec(A)/\GL_n]$. We can choose the map $\spec(A^{\GL_n})\to M$ to be a neighbourhood of $s(x)$, namely a neighbourhood of the image of the closed point via $s$.

In what follows we will adopt the following notation: we denote by $p_i$ the closed point of $\Theta_{i,R}$ and we use the symbol $\cN_i$ to denote a small enough \'{e}tale neighbourhood of $p_i$, which is obtained by gluing two smooth irreducible components $B_1$ and $B_2$ along a closed subscheme $S$, thus forming the codimension one singularity.

We will now proceed in three steps. First, we will collect some numerical invariants $d,m_1,...,m_n$ that depend only on the neighbourhood of $s(x)$ we choose and on the map $\cN_i\smallsetminus \{p_i\}\to \cM$; second, we will perform some twisted blow-ups and, using the uniqueness of the extension in \Cref{cor:val criterion for the nodes iterated} once the numerical invariants are chosen. Finally, we will apply \Cref{theorem contracting stacky P1s} to guarantee that the resulting limit satisfies condition (4) in \Cref{def twisted map}.


\textbf{Step 1:} we collect the numeric invariants $d,m_1,...,m_n$. To do so, consider the inclusion of the diagonal torus $(\Gm)^n\to \GL_n$.
We first show that the map $\cN_i\smallsetminus\{p_i\}\to [\spec(A)/\GL_n]$ lifts to $\cN_i\smallsetminus\{p_i\}\to [\spec(A)/(\Gm)^n]$, using Smith's normal form.

As $[\spec(A)/(\Gm)^n]\cong [\spec(A)/\GL_n]\times_{\cB\GL_n} (\cB\Gm)^n$ and from the universal property of the fiber product, it suffices to check that the vector bundle induced by $\cN_i\smallsetminus\{p_i\}\to [\spec(A)/\GL_n]\to \cB\GL_n$ splits as a sum of line bundles. If we denote by $B_1$ and $B_2$ the two branches of $\cN_i$, the data of a map $\cN_i\smallsetminus\{p_i\}\to [\spec(A)/\GL_n]$ is the data of two vector bundles $E_j\to B_j$ together with an isomorphism $\phi:(E_1)_{|S_\eta}\to(E_2)_{|S_\eta} $, where $S$ is the singular locus of $\cN_i$ (and which is isomorphic to $\spec(R)$).
Up to choosing two isomorphisms $\alpha:\cO_{B_1}^{\oplus n}\to E_1$ and $\beta:\cO_{B_1}^{\oplus n}\to E_1$, the map $\phi$ corresponds to a matrix $A$ with entries in the fraction
field of $R$. If $\pi$ is a uniformizer of $R$ , there is an $\ell\ge 0$ such that $\pi^\ell A$ has coefficients in $R$, therefore from Smith's normal form we can find two matrices $D_1'$ and $D_2'$ with entries in $R$ such that $D_1'\pi^\ell AD_2'=\Delta$ is diagonal.
As $S\to B_j$ is a closed embedding of affine schemes, if $B_j = \spec(\cB_j)$, we can find two matrices $D_1$ and $D_2$ with entries in $\cB_1$ and $\cB_2$ respectively which restrict to $D_1'$ and $D_2'$. Those remain invertible as their determinant is not 0 when restricted to the closed point of $S$ (and also of $\spec(\cB_j)$. Therefore if we change basis on $E_j$ according to $D_j$, we can assume that the morphism $\phi$ is a diagonal matrix.

Moreover, the diagonal entries are of the form $(u_1 \pi^{m_1},...,u_n\pi^{m_n})$ where $u_j$ are units, and
    $m_j\le m_{j+1}$ for $j<n$.
Therefore, from the uniqueness properties of Smith's normal form, the integers $m_j$ are uniquely determined.

To determine the integers $d_1,\ldots,d_n$, we look at the action of $\Gm^n$ on $\spec(A)$: the $k^{\rm th}$ factor of $\Gm^n$ defines a grading on $A$, and we set $d_k$ to be an integer such that the graded ring $A$ is generated in degree $[-d_k,d_k]$.

\textbf{Step 2}: We extend the map $\cN_i\smallsetminus\{p_i\}\to [\spec(A)/(\Gm)^n]$. We start by extending the induced morphism $\cN_i\smallsetminus\{p_i\}\to \cB\Gm^n$. 

For this, we first apply \Cref{proposition val criterion for the nodes}, thus obtaining a stack $\cN_i^{(1)}$ where we can extend the composition $\cN_i\smallsetminus\{p_i\}\to \cB\Gm^n\to \cB\Gm$, the last map being the projection on the first factor. The stack $\cN_i^{(1)}$ is obtained by performing an $(m_1,d_1)$-blow up on one branch, and then glue the two branches back together.

We can then apply \Cref{cor:val criterion for the nodes iterated} to produce a stack $\cN_i^{(2)}$ together with an extension of the composition $\cN_i\smallsetminus\{p_i\}\to \cB\Gm^n\to \cB\Gm^2$, the last map being the the projection on the first two factors. Again, the stack $\cN_i^{(2)}$ is obtained by performing an $(m_2,d_2)$-blow up on the second branch and then gluing back together the two branches.

After applying \Cref{cor:val criterion for the nodes iterated} enough times, we obtain a stack $\cN_i^{(n)}$ which is birational to $\cN_i$, its exceptional divisor consists of a chain of stacky $\cP^1$'s, with twists in the nodes, and more importantly it has a morphism to $\cB\Gm^n$ that extends the one defined on $\cN_i\smallsetminus\{p_i\}$.

We can now apply \Cref{rmk:extension} to obtain a map $\cN_i^{(n)}\to[\spec(A)/\Gm^n]$. Composing this map with the morphism $[\spec(A)/\Gm^n]\to [\spec(A)/\GL_n]$, we finally obtain $\cN_i^{(n)} \to [\spec(A)/\GL_n] \to \cM$.

We can repeat the process above for every closed non smoothable node $p_i$ on the central fiber of $\cC_R'\to\spec(R)$, thus producing several $\cN_i^{(n)}$ for every $i$, and a morphism
 $\sqcup_{i} \cN_{i}^{(n)} \longrightarrow \cM.$

Observe that the complement in $\cN_i^{(n)}$ of the exceptional divisor is by construction isomorphic to $\cN_i\smallsetminus\{p_i\}$, and $\cN_i$ is an \'etale neighbourhood of $p_i$. The construction of $\cN_i^{(n)}$ are uniquely determined by the integers $d, m_1,...,m_n$, so they commute with \'etale base change. Then we can glue them to get a twisted curve $\cC''_{i,R}$,i.e. we get the following diagram, where the vertical maps are \'etale neighbourhoods
\[
\begin{tikzcd}
\sqcup_{i} \cN_i\smallsetminus\{p_i\} \ar[r] \ar[d] & \sqcup_{i} \cN_i^{(n)} \ar[d] \ar[dr] & \\
\cC'_R\smallsetminus (\sqcup_i p_i) \ar[r] & \cC''_{i,R}\ar[r] & \cM.
\end{tikzcd}
\]
The morphism $\sqcup_i \cN_i^{(n)}\to \cM$, which extend $\sqcup_i \cN_i\smallsetminus\{p_i\}\to\cM$, are uniquely determined by the integers $d, m_1,...,m_n$, the map $\cC'_R\smallsetminus (\sqcup_i p_i)\to \cM$, and a choice of a maximal torus in $\GL_n$ (the diagonal one) so they descend to $\cC''_{i,R}\to \cM$ as in the diagram above. In other terms, after performing twisted blow ups at the non-smoothable nodes in the central fiber of $\cC_R'\to\spec(R)$, we are able to extend the map.


\textbf{Step 3}: the goal of this last step is contracting stacky $\cP^1$s that factor via $\cB\bmu_k\to \cM$ for some integer $k$.

In fact, at the end of step (2), we have proved that we can extend the morphism $C_\eta \to \cM$ the map whose domain is a new family of curves $\cC_R''$. We are then left with showing that if the generic fiber is a twisted map, then we can assume that also the generic one is a twisted map. More precisely, we claim that we can perform a further birational modification $\cC_R''\to \cC_R$ over $\spec(R)$ so that the morphism $\cC''_R \to \cM$ descends to a morphism $\cC_R\to \cM$ and the new family of twisted curves $\cC_R\to \spec(R) $ satisfies also condition (4) in Definition \ref{def twisted map}. This claim follows from \Cref{theorem contracting stacky P1s}.
\end{proof}

\subsection{Contracting (stacky) $\cP^1$s on a stacky surface}\label{subsection contracting rational curves}
The goal of this section is to collect a few results, analogous to those of Artin \cite{Artin_contracting_curves}, on contracting rational (stacky) curves on 2-dimensional Deligne-Mumford stacks. We focus on the case where the coarse moduli spaces of our surface singularities are $A_n$ singularities, as those are the singularities which are of interest for one parameter families of curves.  Therefore we begin with the following. 
\begin{Lemma}\label{Lemma extend the action around a node to a resolution}
Consider the singularity $X=\spec(k[x,z,y]/xy-z^a)$, with $a>1$,  and assume that $\bmu_k$ acts on $k[x,z,y]/xy-z^a$ by sending $\xi*x= \xi x$, $\xi*y= \xi^{-1} y$ and $\xi*z=z$. Consider the blow-up of the closed point of $X$. Then the action extends to the two exceptional divisors, and the action on the two components of the exceptional divisor is balanced. Such an extension is unique.
\end{Lemma}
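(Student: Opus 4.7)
The plan is to reduce everything to an explicit coordinate computation on the blow-up, exploiting that the $\bmu_k$-action on $X$ is the restriction of a linear action on the ambient $\bA^3$.

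First I would observe that the given action on $X$ is induced by the diagonal $\bmu_k$-action on $\bA^3=\spec(k[x,y,z])$ with weights $(1,-1,0)$, which fixes the origin and hence preserves the ideal $(x,y,z)$ that we are blowing up. By the universal property of blowing up along an invariant ideal, this action lifts uniquely to $\operatorname{Bl}_0\bA^3\to \bA^3$, and because it also preserves $X\subset\bA^3$ it preserves the strict transform. Restricting gives the desired unique extension of the action to the blow-up of $X$ at its closed point. This establishes the existence and uniqueness parts simultaneously.

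For the balanced statement, I would work in the standard chart $U_z\subset\operatorname{Bl}_0\bA^3$ with coordinates $(u,v,z)$ satisfying $x=zu$ and $y=zv$. There the strict transform of $X$ is cut out by $uv=z^{a-2}$, and for $a\geq 3$ the exceptional locus in this chart is $\{uv=0,\, z=0\}$, namely the two lines $\{u=0\}$ and $\{v=0\}$ meeting at the origin $q$ of $U_z$. Gluing with the $U_x$ and $U_y$ charts (where, by the symmetric computation, the strict transform reads $s=x^{a-2}t^a$ and the exceptional is a copy of $\bA^1$) shows that each of these two lines completes to one of the two exceptional $\bP^1$'s. In $U_z$ the lifted action reads $\xi\ast u=\xi u$, $\xi\ast v=\xi^{-1}v$, $\xi\ast z=z$, which manifestly preserves $uv=z^{a-2}$ and fixes $q$; and the characters of $\bmu_k$ on the two tangent directions $\partial_u,\partial_v$ at $q$ are $\xi$ and $\xi^{-1}$, which is precisely the Abramovich--Vistoli balancedness condition at the node. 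When $a=2$, the equation in $U_z$ becomes $uv=1$, so $U_z$ misses the exceptional locus; the exceptional is then a single smooth $\bP^1$ and there is no balancedness to check, while existence and uniqueness still follow from the argument of the previous paragraph.

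There is no real obstacle beyond the coordinate bookkeeping required to glue the three affine charts of $\operatorname{Bl}_0\bA^3$ and to read off the characters of $\bmu_k$ on the tangent directions at the node; I do not expect any conceptual difficulty.
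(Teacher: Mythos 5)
Your proposal is correct and follows essentially the same route as the paper: both lift the obvious linear $\bmu_k$-action on $\bA^3$ to the blow-up of the origin, take the strict transform of $\{xy=z^a\}$, and read off the balancedness in the chart containing the node of the exceptional divisor (the paper puts this last computation in the remark immediately after the lemma rather than in the proof itself). The only difference worth noting is a matter of emphasis: you phrase uniqueness via the universal property of blowing up an invariant ideal, whereas the paper appeals to density plus separatedness; both are standard and equivalent here. You also explicitly flag the $a=2$ degenerate case (single exceptional $\bP^1$, no balancedness to check), which the paper's statement and proof gloss over even though that case does occur in the iterated blow-ups of \Cref{prop Artin contractions} — a small but genuine clarification.
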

\begin{proof}First observe that we can extend the action of interest to an action on $\bA^3$. We can extend it to the blow-up of $\bA^3$ at the origin: this is $\operatorname{Proj}(k[x,y,z,u,v,w]/(xu-yv, xw-zv, yw-zu))$, and extend our action by defining $\xi*u=\xi^{-1}u$, $\xi*v=\xi v$ and $\xi*w = w$.  We can take the proper transform of the hypersurface $xy-z^a$, which will be the desired blow-up with the desired action extended. The uniqueness follows as both actions agree on a dense open subset, and our blow-up is separated.
\end{proof}
\begin{Oss}\label{Oss extend the action around a node to a resolution}It is easy to check how the action is defined on the blow-up of \Cref{Lemma extend the action around a node to a resolution}. Indeed, the chart where $w=1$ is isomorphic to $k[\frac{u}{w}, \frac{v}{w}, z]/\frac{v}{w}\frac{u}{w}-z^{a-2},$ with the action that sends $\xi * \frac{v}{w} = \xi\frac{v}{w}$ and $\xi*\frac{u}{w} = \xi^{-1}\frac{u}{w}$ and $\xi*z = z$. The charts where $u=1$ or the one where $v=1$ can be described analogously. 
\end{Oss}

\begin{Prop}\label{prop Artin contractions}
Assume that $\cX$ is a proper 2-dimensional Deligne-Mumford stack, which is a scheme in codimension one, with coarse moduli space $X$. Let $\cP^1\subseteq \cX$ be a proper substack with coarse moduli space isomorphic to $\bP^1$. Assume that:
\begin{enumerate}
\item There are two distinct points $p,q\in \cP^1$  with stabilizer $\bmu_k$, such that $\cP^1\smallsetminus \{p,q\}$ is a scheme,
\item The singularities of $\cX$ (resp. $X$) at $p$ and $q$ are of type $A_{m-1}$ and $A_{n-1}$(resp. $A_{km-1}$ and $A_{kn-1}$ respectively, 
\item $\cX$ is smooth at every point of $\cP^1\smallsetminus \{p,q\}$, and 
\item There is a contraction $f:X\to Y$ whose exceptional divisor is just the coarse moduli space of $\cP^1$, and $Y$ has $A_{n'}$-singularities for an integer $n'$.
\end{enumerate}
Then there is a Deligne-Mumford stack $\cY$ with coarse moduli space $Y$ and with a representable map $\psi:\cX\to \cY$ lifting $f$, which induces isomorphisms $\operatorname{Aut}_\cX(p)\to \operatorname{Aut}_\cY(\psi(p))$ and $\operatorname{Aut}_\cX(q)\to \operatorname{Aut}_\cY(\psi(q))$ .
\end{Prop}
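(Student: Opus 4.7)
The plan is to construct $\cY$ by gluing two pieces: on $Y \smallsetminus \{y\}$, where $y := f(\cP^1)$, we take $\cY$ to agree with the scheme $\cX \smallsetminus \cP^1 \simeq Y \smallsetminus \{y\}$; on an \'etale neighborhood of $y$, we define $\cY$ as an explicit quotient stack $[V/\bmu_k]$. The local model $V$ is dictated by the coarse singularity $A_{n'}$ at $y$ together with the stacky structure we want to impose. Writing $\cO_{Y,y}^h \simeq k\llbracket u, v, w \rrbracket/(uv - w^{n'+1})$ and using that the minimal resolution of $X$ at the two $A$-singularities together with the central $\bP^1$ (which must be a $(-2)$-curve since $Y$ has an $A$-singularity) gives the minimal resolution of $Y$, we force $n' = k(m + n) - 1$. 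I would then define
\[ V := \spec(k[u', v', w]/(u'v' - w^{m + n})) \]
with the $\bmu_k$-action $\zeta \cdot u' = \zeta u'$, $\zeta \cdot v' = \zeta^{-1} v'$, $\zeta \cdot w = w$; its invariant subring is $k[u'^k, v'^k, u'v', w] \simeq k[u, v, w]/(uv - w^{k(m+n)})$, which matches the local equation of $Y$, and the only $\bmu_k$-fixed point is the origin.

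To build the morphism $\psi : \cX \to \cY$ on a neighborhood of $\cP^1$, I would exhibit a $\bmu_k$-equivariant scheme $\widetilde{\cX}$ such that $[\widetilde{\cX}/\bmu_k]$ recovers $\cX$ near $\cP^1$, together with a $\bmu_k$-equivariant contraction $\widetilde{\cX} \to V$ collapsing the central $\bP^1$. Concretely, $\widetilde{\cX}$ is obtained from $V$ by iteratively blowing up the singular point, using \Cref{Lemma extend the action around a node to a resolution} to extend the $\bmu_k$-action uniquely at each step and the description in \Cref{Oss extend the action around a node to a resolution} to track the intermediate $A$-type singularities; one stops at the partial resolution whose exceptional divisor is a single $\bP^1$ connecting an $A_{m-1}$-singularity to an $A_{n-1}$-singularity (where to stop is dictated by the target indices $m-1$ and $n-1$, and the uniqueness clause of \Cref{Lemma extend the action around a node to a resolution} ensures that this $\bmu_k$-equivariant partial resolution is determined). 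Then $[\widetilde{\cX}/\bmu_k]$ has \'etale-local structure matching $\cX$ near $\cP^1$ (namely $[A_{m-1}/\bmu_k]$ at $p$, $[A_{n-1}/\bmu_k]$ at $q$, and schematic in between), yielding the local morphism $\psi$.

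Finally, I would glue $[V/\bmu_k]$ on a neighborhood of $y$ with $\cX \smallsetminus \cP^1$ on $Y \smallsetminus \{y\}$ along their common open, on which both pieces coincide with the scheme $Y \smallsetminus \{y\}$; this produces a Deligne-Mumford stack $\cY$ with coarse moduli space $Y$, together with the morphism $\psi$ lifting $f$, such that the induced maps $\operatorname{Aut}_\cX(p) = \bmu_k \to \bmu_k = \operatorname{Aut}_\cY(\psi(p))$ and analogously at $q$ are identities by construction, which gives both representability of $\psi$ and the desired isomorphisms on automorphism groups. The hardest step I expect is the iterated use of \Cref{Lemma extend the action around a node to a resolution}: at each blow-up, the extended $\bmu_k$-action must remain balanced, the intermediate $A$-singularities must have the correct indices, and ultimately $[\widetilde{\cX}/\bmu_k]$ must agree with the given $\cX$ on a neighborhood of $\cP^1$. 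This compatibility ultimately reduces to the uniqueness clause in \Cref{Lemma extend the action around a node to a resolution} together with the explicit local chart computation in \Cref{Oss extend the action around a node to a resolution}.
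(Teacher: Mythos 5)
Your overall strategy parallels the paper's: identify $n' = k(m+n)-1$, define $\cY$ near the singular point as a $\bmu_k$-quotient of the $A_{m+n-1}$-singularity $V = \spec(k[u',v',w]/(u'v'-w^{m+n}))$, relate $\cX$ and $\cY$ via $\bmu_k$-equivariant blow-ups using \Cref{Lemma extend the action around a node to a resolution} and \Cref{Oss extend the action around a node to a resolution}, and glue. But there are two genuine gaps.

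First, the parenthetical assertion that the central curve must be a $(-2)$-curve ``since $Y$ has an $A$-singularity'' is exactly what has to be proved, and it is the input you need to \emph{conclude} that $n' = k(m+n)-1$. One has to rule out that the strict transform $C_Z$ of the coarse $\bP^1$ on the minimal resolution $Z$ of $X$ is a $(-1)$-curve. The paper does this with inversion of adjunction for log-canonical surface singularities together with the formula $\deg\omega_C(\operatorname{Diff}) = -2 + (1 - \tfrac{1}{km}) + (1 - \tfrac{1}{kn}) \ge -1$ when $k \ge 2$, which yields $K_Z.C_Z > -1$; this step is not cosmetic and cannot be skipped. (An alternative elementary argument is available: if $C_Z^2 = -1$, the intersection matrix of the chain $-2,\dots,-2,-1,-2,\dots,-2$ of exceptional curves of $Z \to Y$ has vanishing determinant, contradicting negative definiteness; but some argument is required.)

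Second, and more seriously, the proposed construction of $\widetilde{\cX}$ cannot produce the claimed surface. Iterated point blow-ups of $V$ at its singular point (as in \Cref{Lemma extend the action around a node to a resolution} and \Cref{Oss extend the action around a node to a resolution}) replace $A_{a-1}$ by $A_{a-3}$ and add \emph{two} exceptional $\bP^1$'s at each step; the singular point therefore always sits in the middle of a symmetric chain with equally many $\bP^1$'s on each side. Thus no partial resolution in this sequence has exceptional divisor ``a single $\bP^1$ connecting an $A_{m-1}$-singularity to an $A_{n-1}$-singularity'' unless $m = n$. The asymmetric surface $\cX$ is a partial resolution of $\cY$ that is \emph{not} reachable by blowing up the singular point of $V$. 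The paper circumvents this entirely: it passes to the \emph{full} minimal resolution $\cY'$ (which \emph{is} symmetric, obtained by $\lfloor (m+n)/2 \rfloor$ point blow-ups), identifies $\cY'$ with a resolution of $\cX$, and then produces the morphism $\cX' \to \cY$ not by matching an explicit model but by the abstract extension lemma \cite{DH18}*{Lemma 7.4}, applied to the rational map $\cX' \dashrightarrow \cY$ defined away from $\cP^1$; the existence and uniqueness of DVR-lifts through $\cY'$ supplies the needed input. The final representability and automorphism-group claims are then extracted via the relative coarse moduli space, again without ever identifying $\cX$ with a hand-built quotient. To repair your proof you would need either to go through the full resolution as the paper does, or to allow contractions (not just blow-ups) in the construction of the intermediate model, and in either case to carefully justify the identification with the given $\cX$ rather than assume it.
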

\begin{proof}
We can assume $k\ge 2$. Let $C\cong \bP^1$ be the coarse space of $\cP^1$ on $X$ and $C_{Z}$ its proper transform on a minimal resolution $\xi:Z\to X$ of $X$ at $p$ and $q$. We first check that $C_Z$ is not a $-1$ curve. Indeed, $X$ has Gorenstein singularities so $K_X.C$ is an integer. Moreover, $C^2<0$ as it is contracted by $X\to Y$. By inversion of adjunction for log-canonical surface singularities, $(K_X+C)|_C = \omega_C(\operatorname{Diff}_C)$ where $\operatorname{Diff}_C$ is the different (see \cite{Kollarsingmmp}*{Definition 2.34} or \cite{BI}*{Section 2.2} with $\Delta=0$). Then from \cite[Corollary 4.9 and Lemma 4.11 (1)]{BI}, since $p$ and $q$ are quotients by $\bmu_{km}$ and $\bmu_{nk}$, $$(K_X+C).C = \deg(\omega_C(\operatorname{Diff})) =  -2 + 1- \frac{1}{km}   + 1-\frac{1}{kn}\ge -2 + \frac{1}{2} +\frac{1}{2} = -1. $$
As $C^2<0$, we have $K_X.C > -1$. But the singluarlties of $X$ are Du Val, so $\xi^*K_X = K_Z$, so $K_Z.C_Z>-1$: the curve $C_Z$ is not a $-1$-curve. Then from the minimality of the resolution $\xi$, there are no $-1$ curves contracted by $Z\to Y$, so $Z\to Y$ is a minimal resolution which contracts $km +kn - 1$ rational curves. As $Y$ has $A_{n'}$-singularities, $Y$ has an $A_{k(m+n)-1}$-singularity at the image of $p$ and $q$; we denote it by $z$. 
 
 This singularity is the $\bmu_k$-quotient of an $A_{m+n-1}$-singularity, so let $\cY$ be the stacky surface having this singularity,  with a $\bmu_k$ stabilizer at the singular point, and with a map $\cY\to Y$.  Consider $\cY'\to \cY$ a minimal resolution of singularities around $z$. This is obtained by blowing up the closed singular points $\lfloor \frac{m+n}{2} \rfloor$ times (see Observation \ref{Oss extend the action around a node to a resolution}). Then there is a map $\cY'\to \cX$. Indeed, we can similarly resolve the singularities on $\cX$ by blowing up the singular points, and it is easy to see (for example using \cite{GS17}) that such a resolution is isomorphic to $\cY'$. Therefore we have two birational, proper and representable maps $a:\cY'\to \cY$ and $b:\cY'\to \cX$.
Now, consider $\cX'\to \cX$ the covering stack of $\cX$ around $\cP^1$.   As $\cX'\to \cX$ is an isomorphism away from $\cP^1$,  and $\cX\smallsetminus \cP^1 \cong \cY\smallsetminus \{z\}$ by construction, there is a dense open subset $\cU$ of $\cX'$ that admits a morphism $\cU\to \cY$.  We wish to extend this morphism to $\cX'$ using \cite[Lemma 7.4]{DH18}. So let $\spec(R)\to \cX'$ be a morphism from a DVR, that sends the generic point on $\cU$ (i.e. away from the preimage of $\cP^1$ in $\cX'$). As $\cY'\to \cX$ is proper and representable, and an isomorphism over $\cU$, we can lift \emph{uniquely} the map $\spec(R)\to \cX'$ to a map $\spec(R)\to \cY'$. In turn, by composing it with $\cY'\to \cY$, we have a (unique) map $\spec(R)\to \cY$. Therefore from \cite[Lemma 7.4]{DH18} we have a morphism $\cX'\to \cY$. Consider now $\cX^r\to \cY$ its relative coarse moduli space. The stabilizer points $\operatorname{Aut}_{\cX'}(p)$ and $\operatorname{Aut}_{\cX'}(q)$ are isomorphic to $\bmu_{km}$ and $\bmu_{kn}$, so $\operatorname{Aut}_{\cX^r}(p)$ and $\operatorname{Aut}_{\cX^r}(q)$ are quotients of $\bmu_k$: the composition $\cX'\to \cX^r$ factors via $\cX'\to \cX\to \cX^r$, so there is a map $\cX\to \cX^r$ and in turn a map $\cX\to \cY$.  It is easy to check (for example, using \cite[Lemma 7.2]{DH18}) that the compositions $\cY'\to \cY$ and $\cY'\to \cX\to \cY$ are isomorphic.  As $\cY'\to \cY$ is surjective on automorphism groups, also $\cX\to \cY$ is surjective on automorphism groups. Then it is also injective in automorphism groups as both have automorphisms groups isomorphic to $\bmu_k$ along the points $p,q$ and $z$, and elsewhere they are isomorphic. So $\cX\to \cY$ is representable, as desired. The statement about automorphisms groups now follows since an injective map $\bmu_k\to \bmu_k$ is an isomorphism.
\end{proof}
\begin{Prop}\label{prop artin contraction theorems for stacks}
With the assumptions and notations of \Cref{prop Artin contractions}, assume also that there is an algebraic stack $\cM$ and a representable map $\phi: \cX\to \cM$. Assume that there is a factorization $\phi|_{\cP^1}:\cP^1 \to \cM$ as $\cP^1 \to \cB\bmu_k \to \cM$ for a certain $k$.  Then there is a Deligne-Mumford stack $\cY$ with coarse moduli space $Y$ and with a representable map $\cY\to \cM$ which factors $\cX\to \cM$. The map $\cY\to \cM$ is unique.
\end{Prop}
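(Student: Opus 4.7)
The plan is to first invoke \Cref{prop Artin contractions} to produce the representable contraction $\psi:\cX\to\cY$, and then to descend $\phi$ along $\psi$ to a morphism $g:\cY\to\cM$ by exactly the same valuative-criterion strategy that was used in the proof of \Cref{prop Artin contractions}, i.e.\ \cite[Lemma 7.4]{DH18}. Applying \Cref{prop Artin contractions} supplies $\cY$ with coarse space $Y$ together with a proper, representable morphism $\psi:\cX\to\cY$ that is an isomorphism away from the single point $z:=\psi(\cP^1)$ and that identifies $\Aut_\cY(z)\simeq\bmu_k$ with the common stabilizer of $p,q\in\cP^1$. Via the canonical isomorphism $\cY\smallsetminus\{z\}\simeq\cX\smallsetminus\cP^1$ the morphism $\phi$ immediately restricts to a morphism $g_0:\cY\smallsetminus\{z\}\to\cM$, and the problem reduces to extending $g_0$ to a representable $g:\cY\to\cM$ with $g\circ\psi\cong\phi$.

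To construct the extension, I would take any $\spec(R)\to\cY$ from a DVR whose generic point lies in $\cY\smallsetminus\{z\}$. Because $\psi$ is proper and representable, the fiber product $\cX\times_\cY\spec(R)$ is a proper algebraic space over $\spec(R)$, and it carries a canonical generic section coming from $\cX\smallsetminus\cP^1\simeq\cY\smallsetminus\{z\}$. Taking the closure of this generic section produces a unique lift $\tilde f:\spec(R)\to\cX$, up to unique $2$-isomorphism; composing with $\phi$ yields $\phi\circ\tilde f:\spec(R)\to\cM$ extending $g_0$ along $\spec(R)\to\cY$. This construction is functorial in $\spec(R)\to\cY$, so \cite[Lemma 7.4]{DH18} assembles these DVR-wise morphisms into the desired $g:\cY\to\cM$ with $g_{|\cY\smallsetminus\{z\}}=g_0$, and hence $g\circ\psi\cong\phi$.

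It remains to verify representability and uniqueness. Away from $z$ the map $g$ coincides with $\phi$ on an open substack of $\cX$, so it is representable there. At $z$, the composition $\cB\bmu_k\hookrightarrow\cY\xrightarrow{g}\cM$ agrees with the prescribed factorization $\cB\bmu_k\to\cM$ of $\phi|_{\cP^1}$: one sees this by restricting both to either stacky point $p$ or $q$ of $\cP^1$ under the identification $\Aut_\cX(p)\simeq\Aut_\cY(z)$. Representability of $\phi$ at $p$ then forces the induced map $\bmu_k\to\Aut_\cM(g(z))$ to be injective, which is representability of $g$ at $z$. Uniqueness of $g$ is automatic, since any factorization of $\phi$ through $\psi$ restricts to $g_0$ on the dense open $\cY\smallsetminus\{z\}$, and the extension produced by \cite[Lemma 7.4]{DH18} is unique. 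The principal technical obstacle is thus the well-definedness of the DVR construction, which hinges on the unique extendibility of the generic lift to $\tilde f$; this is exactly what properness and representability of $\psi$ give us.
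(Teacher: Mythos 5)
Your overall strategy — first contract via \Cref{prop Artin contractions}, then descend $\phi$ along $\psi$ by a DVR-criterion argument — is a genuinely different route from the paper, which instead passes to the smooth covering stack $\cY'$ of $\cY$, extends $\cY'\smallsetminus\{y\}\to\cM$ using \Cref{prop smooth purity}, and then identifies $\cY$ as the relative coarse moduli space of $\cY'\to\cM$. The paper also first reduces, via étale descent and \cite{luna}*{Theorem 13.10}, to $\cM=[\spec(A)/\GL_n]$, which is what makes its final comparison of $\phi$ with $\cX\to\cY\to\cM$ tractable.

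There is, however, a real gap in your argument, and it sits in two related places. First, \cite{DH18}*{Lemma 7.4} is being used by the paper with a \emph{separated} target ($\cY$, which has a scheme coarse space); the uniqueness of the DVR extension $\spec(R)\to\cY$ is precisely what makes the lemma applicable. You are invoking it with target $\cM$, but for $\cM$ of the kind appearing in this paper (e.g.\ $[\bA^1/\Gm]$, or more generally $[X/G]$ with projective good moduli space) the uniqueness part of the valuative criterion fails: two extensions $\spec(R)\to\cM$ of a fixed $\eta\to\cM$ need not be isomorphic. So your functorial choice $\phi\circ\tilde f$ is a coherent family of \emph{some} extensions, but this is not obviously enough for the cited lemma as used in the paper. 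Second, and for the same reason, your step ``hence $g\circ\psi\cong\phi$'' does not follow: $g\circ\psi$ and $\phi$ agree on the dense open $\cX\smallsetminus\cP^1$, but two morphisms to a non-separated stack that agree on a dense open need not agree globally. The paper deals with exactly this by reducing to $\cM=[\spec(A)/\GL_n]$ and comparing the induced vector bundles on $\cX$ (which agree in codimension one, hence everywhere by reflexivity/$S_2$); without that reduction and reflexive-sheaf argument, the agreement of $g\circ\psi$ with $\phi$, and consequently your representability computation at $z$ (which presupposes it), is unjustified. The uniqueness claim at the end suffers from the same issue: it relies again on ``agree on a dense open $\Rightarrow$ agree.'' Your preliminary observation — that the scheme-theoretic closure of the generic section in $\cX\times_\cY\spec(R)$ is a genuine section because it is finite, birational onto the DVR, hence an isomorphism — is correct, but it doesn't rescue the parts that require separatedness of the target.
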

\begin{proof}
Let $\cY$ be the surface constructed at the end of \Cref{prop Artin contractions}. We check that there is a map $\cY\to \cM$ which factors $\cX\to \cM$. Since the resulting map will be unique, from \'etale descent and using \cite{luna}*{Theorem 13.10}, we can assume that $\cM = [\spec(A)/\GL_n]$. 
Consider now $\pi:\cY'\to \cY$ the canonical covering stack, and let $y\in \cY'$ be the locus where $\cX\to \cY$ is not an isomorphism. From \Cref{prop smooth purity}, we can extend uniquely the map $\cY'\smallsetminus \{y\}\to \cM$ to a map $\psi:\cY'\to \cM$. We aim now at showing that $\cY$ is the relative coarse moduli space of $\cY'\to \cM$. It suffices to check that the kernel of the maps $\operatorname{Aut}_{\cY'}(y)\to \operatorname{Aut}_{\cY}(\pi(y))$ and  $\operatorname{Aut}_{\cY'}(y)\to \operatorname{Aut}_{\cM}(\psi(y))$ agree. But
this follows from the commutativity of the following diagram:
$$\xymatrix{\operatorname{Aut}_{\cX}(q) \ar@{<->}[d]_\cong \ar[r]^-{\text{injective}} & \operatorname{Aut}_{\cM}(\phi(y)) \\ \operatorname{Aut}_{\cY}(\pi(y)) & \operatorname{Aut}_{\cY'}(y). \ar[l]^-{\text{surjective}} \ar[u]}$$
Therefore the relative coarse moduli space of $\cY'\to \cM$ is $\cY$. To see that $\phi$ agrees with  $\cX\to \cY\to \cM$ it suffices to check that the compositions $\cX \xrightarrow{\phi}\cM\to \cB\GL_n$ and $\cX\to \cY\to \cM \to \cB\GL_n$ agree.  But those are two vector bundles which, by assumption, agree on all points of codimension 1, so they agree.
\end{proof}

Combining Propositions \ref{prop Artin contractions} and \ref{prop artin contraction theorems for stacks}, one obtains the following:
\begin{Teo}\label{theorem contracting stacky P1s}
Assume that $\cC\to \spec(R)$ is a family of twisted curves over the spectrum of a DVR with closed point $p$. Assume that $\cM$ is an algebraic stack with a representable map $\phi:\cC\to \cM$. Let $\cP^1\subseteq \cC_p$ be an irreducible component of $\cC_p$, with coarse space isomorphic to $\bP^1$. Assume that there are two (possibly stacky) nodes of $\cC_p$ on $\cP^1$, and assume that the restriction $\phi|_{\cP^1}:\cP^1\to \cM$ factors via a map $\cP^1\to \cB\bmu_m\to\cM$ for a certain $m$.

Then one can contract $\cP^1$. In other terms, there is a family of twisted curves  $\cY\to \spec(R)$, with two morphisms $\cX\to \cY$ and $\cY\to \cM$ such that the composition $\cX\to \cY\to \cM$ is isomorphic to $\phi$ and such that the morphism $\cX\to \cY$ on coarse spaces is an isomorphism away from $\cP^1$ and it contracts $\cP^1$.
\end{Teo}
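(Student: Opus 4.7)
The plan is to deduce the theorem directly from Propositions \ref{prop Artin contractions} and \ref{prop artin contraction theorems for stacks}, once their local hypotheses are verified in the present setting. Since the conclusion is étale-local around the component $\cP^1$, I would first pass to an étale neighborhood of $\cP^1$ in $\cC$. By the definition of twisted nodal curve, each node $q_i$ of $\cC_p$ lying on $\cP^1$ is étale-locally of the form $[\spec R[x,y]/(xy-t^{n_i})/\bmu_{n_i}]$ with the balanced $\bmu_{n_i}$-action, so $\cC$ acquires an $A_{n_i-1}$ quotient singularity at $q_i$ with cyclic stabilizer; moreover $\cP^1\smallsetminus\{q_1,q_2\}$ is schematic by part (3) of \Cref{def twisted map}. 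This matches the local hypotheses of Proposition \ref{prop Artin contractions}.

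The key remaining input is the existence of a contraction of the image $\bP^1\subset C$ on the coarse moduli space $C$ of $\cC$. This is supplied by Artin's contractibility criterion \cite{Artin_contracting_curves} once one knows $(\bP^1)^2<0$ in $C$: this follows because $\cP^1$ is a proper component of the fiber $\cC_p$ and, by hypothesis, meets the rest of $\cC_p$ at the two nodes, so that $\cP^1\cdot(\cC_p-\cP^1)>0$. A local computation at the two nodes, analogous to the one carried out inside the proof of Proposition \ref{prop Artin contractions}, then identifies the resulting singularity of $Y$ as being of $A$-type. With these inputs, Proposition \ref{prop Artin contractions} produces a two-dimensional Deligne--Mumford stack $\cY$ with coarse space $Y$ together with a representable morphism $\cC\to\cY$ that contracts precisely $\cP^1$ and induces isomorphisms on stabilizer groups at the images of $q_1,q_2$. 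Proposition \ref{prop artin contraction theorems for stacks}, applied to the factorization $\phi|_{\cP^1}\colon\cP^1\to\cB\bmu_m\to\cM$, then yields a representable morphism $\cY\to\cM$ through which $\phi$ factors.

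It remains to verify that $\cY\to\spec(R)$ is again a family of twisted curves. This is a local check: away from the image of $\cP^1$ the morphism $\cC\to\cY$ is an isomorphism, and the explicit étale-local description of $\cY$ coming out of Proposition \ref{prop Artin contractions} realizes $\cY$ near that image as a single twisted node, whose order is determined by the $A$-singularity types and stabilizer orders at $q_1,q_2$. Flatness and properness of $\cY\to\spec(R)$ are preserved by the contraction $\cC\to\cY$, and the generic fiber is unchanged; this is enough to conclude that $\cY\to\spec(R)$ is a family of twisted curves.

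The main obstacle I anticipate is the case in which the two stacky nodes $q_1,q_2$ carry cyclic stabilizers of \emph{different} orders, since Proposition \ref{prop Artin contractions} is formulated under the assumption of a common $\bmu_k$ at both points. I would address this by a mild generalization of that proposition: its proof relies on an inversion-of-adjunction computation and an analysis of the minimal resolution, both of which are genuinely local at each singular point of $Y$, so one can run them separately at the image of $q_1$ and the image of $q_2$. Once this generalization is in place, the rest of the argument is routine bookkeeping about the étale-local model around $\cP^1$.
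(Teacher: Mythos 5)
Your overall plan — verify the local hypotheses of Propositions \ref{prop Artin contractions} and \ref{prop artin contraction theorems for stacks} and then invoke them — is indeed the right shape, but the final paragraph reveals a genuine gap. You flag as the ``main obstacle'' the possibility that the two stacky nodes on $\cP^1$ carry stabilizers $\bmu_{n_1}$, $\bmu_{n_2}$ of \emph{different} orders, and you propose to fix this by generalizing Proposition~\ref{prop Artin contractions}. That case, however, cannot occur under the hypotheses of the theorem, and observing this is precisely the content of the paper's short proof. Since $\phi$ is representable, the factor $\cP^1\to\cB\bmu_m$ is also representable (injectivity on automorphism groups for a composite forces injectivity for the first map). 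Hence the associated $\bmu_m$-torsor $C\to\cP^1$ is a scheme; after replacing $m$ by a divisor $k$ one may assume $C$ connected. The composite $C\to\cP^1\to\bP^1$ is then a degree-$k$ cover ramified only over the (at most two) stacky points of $\cP^1$, so \Cref{lemma C to p1 ramified at 2 points implies C=p1} gives $C\cong\bP^1$ with the map $[a,b]\mapsto[a^k,b^k]$. Because $C\to\cP^1$ is \'etale, both stacky points of $\cP^1$ must therefore have stabilizer exactly $\bmu_k$, with the \emph{same} $k$. Once this is established, Propositions~\ref{prop Artin contractions} and~\ref{prop artin contraction theorems for stacks} apply as stated, and no generalization to unequal stabilizers is needed.

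The proposed workaround is also not ``routine'': the inversion-of-adjunction estimate in the proof of Proposition~\ref{prop Artin contractions} uses $k\ge 2$ at \emph{both} points to get $(K_X+C).C\ge -1$, so running the computation with independent orders $k_1,k_2$ would not automatically rule out $(-1)$-curves and would require a separate treatment of mixed cases (e.g.\ $k_1=1$). The paper sidesteps all of this by first deducing $k_1=k_2=k$ via \Cref{lemma C to p1 ramified at 2 points implies C=p1}; you should do the same rather than try to generalize the proposition.
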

\begin{proof}
Consider the map $\cP^1\to \cB\bmu_k$. As it is representable, there is a $\bmu_k$-cover $C\to \cP^1$ which is a scheme. Up to possibly replacing $k$, we can assume that $C$ is connected. Then the composition $C\to \cP^1\to \bP^1$ is ramified at at most two points (the nodes on $\cP^1$) and from \Cref{lemma C to p1 ramified at 2 points implies C=p1}, $C=\bP^1$ and the map is $[a,b]\mapsto [a^k,b^k]$. Then $\cP^1$ has two points with $\bmu_k$-stabilizers. Then the results of Propositions \ref{prop Artin contractions} and \ref{prop artin contraction theorems for stacks} apply, which give a surface $\cY $ with a map $\cY\to \cM$ as desired. One can check from the construction of $\cY$ in \Cref{prop Artin contractions} that $\cY\to \spec(R)$ is a family of twisted curves, as desired.\end{proof}
\section{An example: the case $\cM=[\bA^1/\Gm].$}\label{section example}

In this section we study a particular example: the case where the target stack is $\Theta:=[\bA^1/\Gm]$. Recall that a morphism $X\to \Theta$ from a scheme $X$ is equivalent to the data of a line bundle $\cL$ on $X$, and a section $\cO_X\to \cL$. We give three examples of degenerations of maps to $\Theta$. In each one of these examples, we will have a DVR $R$ with generic point $\eta = \spec(K(R))$ and closed point $0$. We will start from a curve $C_\eta$ over $\eta$, with a map $f_\eta:C_\eta \to \Theta$ and we will try to extend it to $\spec(R)$.

\subsection{Example 1: smooth generic fiber}\label{subsection example 1}

Assume that we have a smooth genus 2 curve $C_\eta$ over $\eta$, the generic point of $\spec(R)$, with a marked $K(R)$-point $p$. This corresponds to a section $\cO_{C_\eta}\to \cO_{C_\eta}(p)$, so to a morphism $C_\eta\to \Theta$. We want to extend this curve over $\spec(R)$.

As the good moduli space of $\Theta$ is the point, we first extend the stable map $C_\eta\to \spec(K(R))$ to $\spec(R)$ (which will also be the Kontsevich limit of the map $C_\eta \to \spec(k)$). Let $C_R$ be such extension, and assume that the special fiber $C_0:= (C_R)_0$ is nodal. We aim at extending the map $f_\eta:C_\eta \to \Theta$ to a stacky modification of $C_R$. The first step of \Cref{theorem existence part val criterion} is to extend the morphism $f_\eta$ to the generic points of $C_0$.

In the proof of \Cref{theorem existence part val criterion} we use \Cref{teorema fix}, however in this case it is easier. We can take a proper closed subscheme $D_\eta\subseteq C_\eta$ containing $p$ such that on its complement the line bundle $\cO_{C_\eta}(p)$ is trivial. Let $D$ be the closure of $D_\eta$. The subscheme $D$ will not contain the generic points of $C_0$, and the map $\cO_{C_\eta}\to \cO_{C_\eta}(p)$ is isomorphic to $\cO_{C_\eta}\xrightarrow{\cdot 1} \cO_{C_\eta}$ on the complement of $D_\eta$, so it extends to the complement of $D$. 
Therefore, at the end of this step we have an extension of the map $f_\eta$ to a map defined the generic points of $C_0$, that then spreads out to an extension of the map defined over $\cU:= C_R\smallsetminus \{p_1,...,p_n\}$ for a finite set of closed points $p_i$.

Observe that there are multiple extensions one can pick. Indeed, for example, if $\Gamma$ is an irreducible component of $C_0$ which is Cartier in $C_R$, we can compose any map $\cO_{\cU}\to \cL$ with $\cO_\cU\to \cL\to \cL(n\Gamma)$ for every $n>0$. It is not hard to check, however, that this is the only freedom we have. Namely, once we fix $C_R$, two extensions of $f_\eta$ that are isomorphic in codimension one are isomorphic. And among all these extensions we have a "minimal" one: the one whose section does not vanish along any of the generic points of the fibers. We will choose this limit in the remaining part of this example.

We now have to extend the morphism $\cU\to \Theta$ to $C_R$, up to replacing some of the nodes with twisted nodes. For this step, it is useful to compare our limit with the corresponding limit in $\overline{\cM}_{g,1}$. Indeed, there are two cases. We denote by $(C^{DM},p^{DM})$ the limit of $(C_\eta, p)$ in $\overline{\cM}_{g,1}$. If $C^{DM}$ is stable, then $C_0 = C^{DM}$ and the line bundle with section is the Cartier divisor given by $p^{DM}$. If however $C^{DM}$ is not stable, since instead $(C^{DM}, p^{DM})$ is stable, the marked point is on a destabilizing $\bP^1$. In this case, the curve $C_0$ (which is obtained by contracting the destabilizing $\bP^1$ to a point) will have a twisted node $n$.
Indeed, there is no principal sheaf of ideals $I$ on $k[\![x,y]\!]/xy$ such that
$k[\![x,y]\!]/(xy,I)\cong k$. So on an \'etale chart around $n$, we will have a multisection with the action of $\bmu_n$ acting on the fibers of the map to $\spec(R)$. Moreover, the degeneration of $\cO_{C_\eta}(p)$ and its section will be a line bundle $\cL$ on $C_0$ with a section vanishing only at the twisted node.
\begin{center}
\includegraphics[scale=0.10]{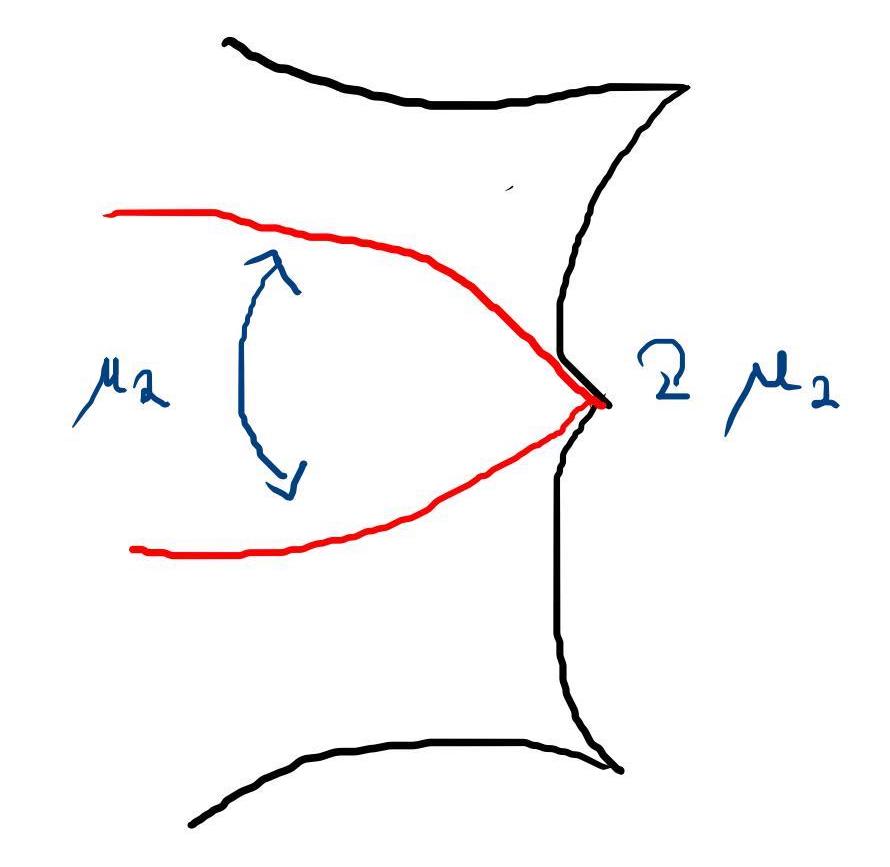} \\ Figure: The zero locus (in red) of $s \in H^0(\cL)$ on an \'etale chart around a node with a $\bmu_2$-stabilizer. In this picture, the singularity is $\spec(k[\![x,y]\!]^{\bmu_2})$ with $(x,y)\mapsto (-x,-y)$ and $V(s)=\{x-y=0\}$.
\end{center}

\subsection{Example 2: nodal but stable generic fiber}\label{subsection example 2}
Assume now that the generic fiber is as the special fiber of \Cref{subsection example 1}. Namely, assume that we have a twisted curve $C$ whose coarse moduli space is a stable curve (i.e. it has no destabilizing $\bP^1$). Assume also that we have a line bundle $\cL$ with a section that vanishes only along a stacky node $n$, and assume that $C\smallsetminus \{n\}$ is connected. Let $k$ be the the order of the automorphism group of $n$. 

Let $C'$ the curve obtained by separating the node $n$. Namely, $C'$ is a partial normalization of $C$ which is isomorphic to $C$ away from $n$ and isomorphic to its normalization in a neighbourhood of $n$. Let also $i_1:n_1\to C'$ and $i_2:n_2\to C'$ be the inclusions of the nodal points in $C'$, i.e. the points lying over the node $n$ of $C$.
Then $C$ is isomorphic to the push-out $C' \cup_{n}
(n_1\cup n_2)$. This implies that the data of a map $ C \to \Theta$ is equivalent to the data of a map $ f':C'\to \Theta$, and an isomorphism $f'\circ i_1\to f'\circ i_2$.

Consider then the product family $f':C'\times\bA^1\to \bA^1$, let $\eta=\spec(k(t))$ and assume that we have a morphism $C_\eta\to \Theta$ given by a morphism $C'_\eta \to \Theta$ and the isomorphism $(f'\circ i_1)_\eta\to (f'\circ i_2)_\eta$ given by $t^m\in \Gm(k(t))$ for $m>0$. This data is equivalent to the data of a line bundle $L_\eta$ on $C'_\eta$ whose fiber $L_{\eta,n_1}\simeq k(t)$ over $n_1$ is glued to the fiber $L_{\eta,n_2}\simeq k(t)$ over $n_2$ via the isomorphism $k(t)\overset{\cdot t^m}{\to} k(t)$.

At this step we have to choose a $d$ such that the graded ring $k[t]$ (graded by the $\Gm$-action) is generated in degree at most $d$. Assume also that the stabilizer at $p$ has order $k$. Then, following the procedure of Theorem \ref{theorem existence part val criterion}, we need to perform a $(m,d)$-blow up on one of the branches of the node. The resulting exceptional component will have an exceptional (stacky) $\bP^1$ with two stacky points $p_1$ and $p_2$: the point $p_1$ has automorphism group
$\bmu_k$. The action is faithful, and assume that $\bmu_k$ acts on a generator $y$ of the stalk at $p_1$ by $\xi*y = \xi^{-1} y$. Whereas $p_2$ (which smooths along this degeneration) has automorphism group $\bmu_{k'd}$  where $k'=\frac{k}{mcd(k, d-1)}$. The zero of the section is still along the twisted node, and along the $\cP^1$. The degree of our line bundle along $\cP^1$ is $\frac{1}{dk}$. It can be understood as the line bundle of the divisor $\frac{1}{dk}[0] -\frac{1}{k}[0] + \frac{1}{k}[\infty]$ on a root-stack of $\bP^1$.
\begin{center}
\includegraphics[scale=0.2]{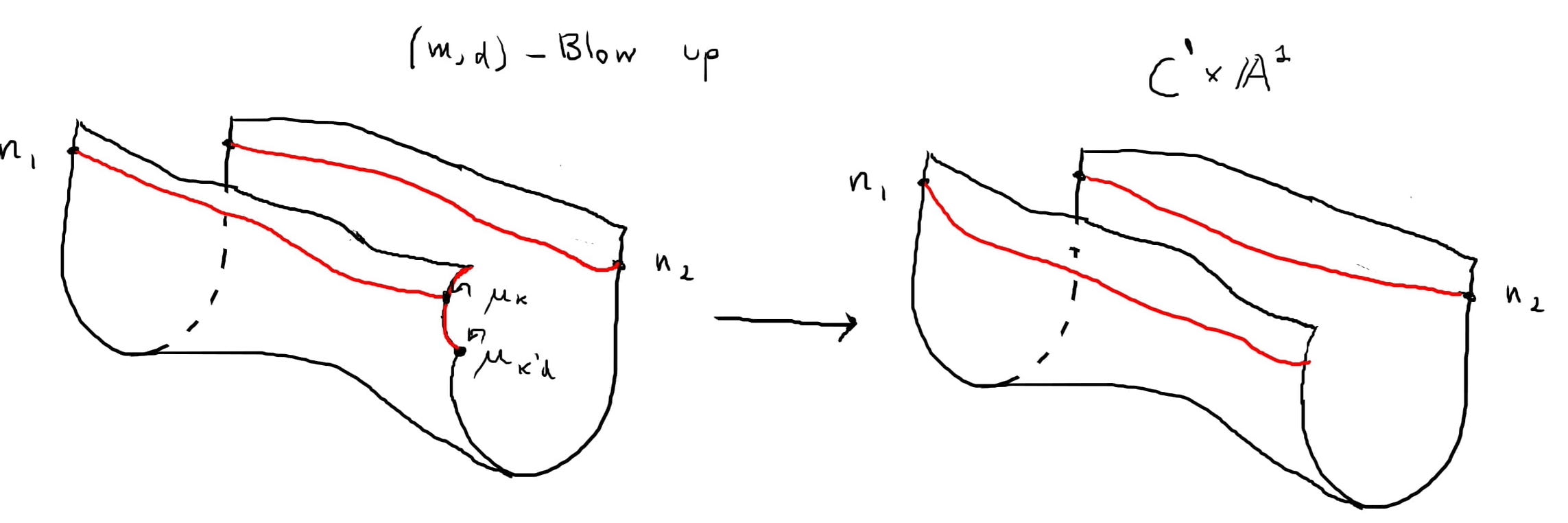} \\ Figure: quotient of the $(m,d)$-blow up. In red the zero locus of the section.
\end{center}

\subsection{Example 3: nodal unstable generic fiber}
For this last example, we assume that the generic fiber is as the special fiber of \Cref{subsection example 2}.

 Namely, assume that we have a twisted curve $C$ whose coarse moduli space has a single destabilizing $\bP^1$. Let $\cP^1$ be the stacky-$\bP^1$ on $C$ that is destabilizing, with $n_1$ and $n_2$ the two nodes with automorphisms groups $\bmu_k$ and $\bmu_{k'd}$ respectively, where $k'=\frac{k}{mcd(k, d-1)}$.
 Assume that we have a line bundle $\cL$ with a section that vanishes \textit{only} along a stacky node $n_1\in \cP^1$, and along $\cP^1$.
 Assume that the restriction of $\cL$ to $\cP^1$ is $\cO_{\cP^1}(\frac{1}{dk}[0] -\frac{1}{k}[0] + \frac{1}{k}[\infty])$, and that $C\smallsetminus \{n_1\}$ is connected. Let $C'\cup \cP^1$ be the curve obtained by separating the nodes $n_1$ and $n_2$. Namely, $C'\cup \cP^1$ is a partial normalization of $C$ which is isomorphic to $C$ away from $n_1$ and $n_2$, and isomorphic to its normalization in a neighbourhood of $n_1$ and $n_2$. Let also $i_1:n_1\to C'$, $i_2:n_2\to C'$ and $j_1:n_1\to \cP^1$, $j_2:n_2\to \cP^1$ be the inclusions of the nodal points in $C'$ and $\cP^1$. The following picture helps keeping track of the notations \begin{center}
\includegraphics[scale=0.09]{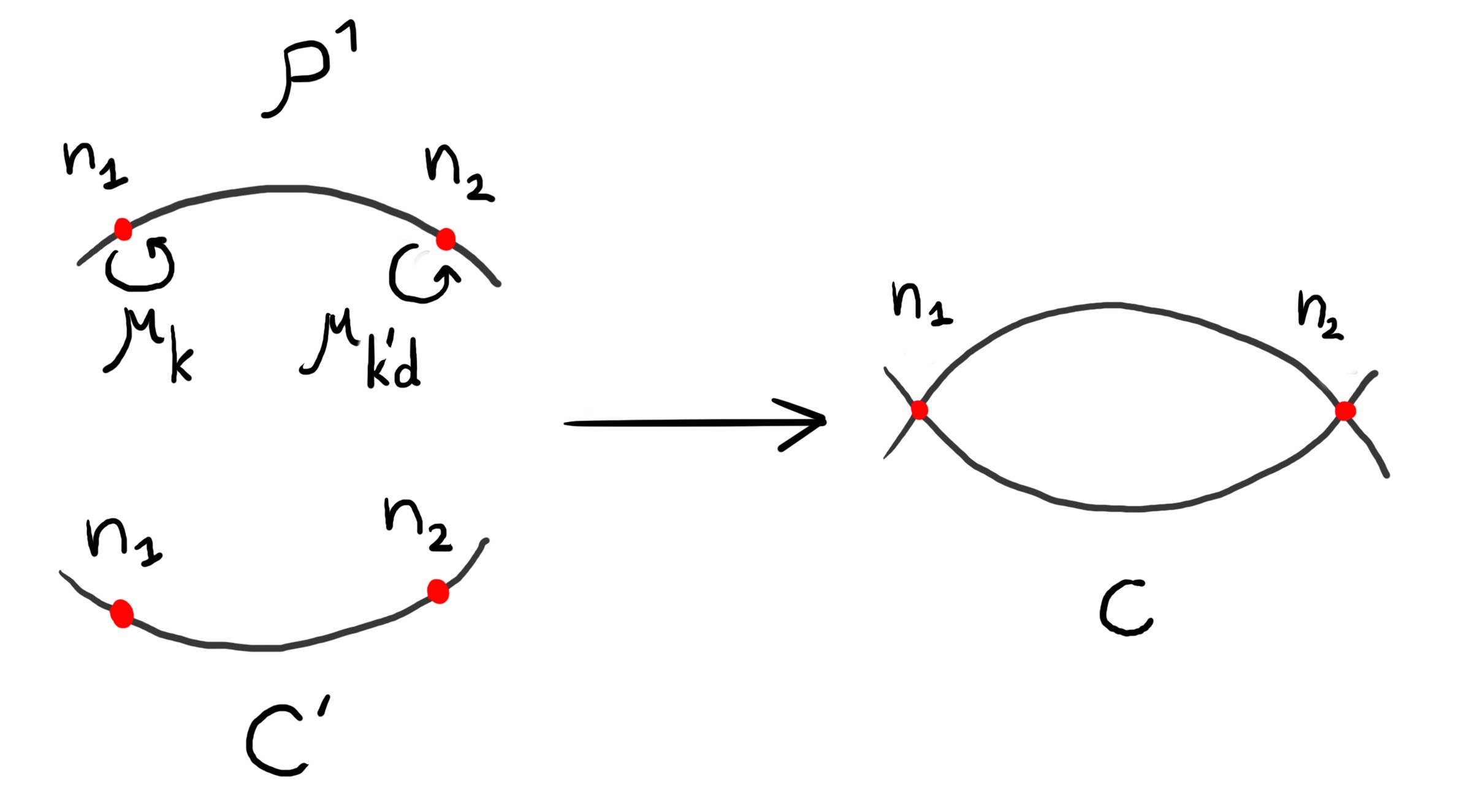} \\ Figure: the normalization of $C'$.
\end{center}
 
 As before $C$ is a push-out of a diagram, that is
$C= (C'\sqcup \cP^1) \cup_{(n_1\cup n_1)\cup (n_2\cup n_2)} (n_1\cup n_2).$
 Therefore the data of a map $ C \to \Theta$ is equivalent to two maps $ f':C'\to \Theta$ and $g':\cP^1\to \Theta$, and  two isomorphisms $f'\circ i_1\to f'\circ i_2$ and $g'\circ j_1\to g'\circ j_2$.
  
  Consider then the product family $f':(C'\cup \cP^1)\times\bA^1\to \bA^1$, let $\eta=\spec(k(t))$ and assume that we have a morphism $C_\eta\to \Theta$ given by two morphisms $f':C_\eta \to \Theta$ and $g':\cP^1_\eta \to \Theta$, and two isomorphism $(f'\circ i_1)_\eta\to (f'\circ i_2)_\eta$ and $(g'\circ j_1)_\eta\to (g'\circ j_2)_\eta$  given by $t^{m_1} \text{ and } t^{m_2}\in \Gm(k(t))$.

Now, the limit we will obtain will be easier than the one in \Cref{subsection example 2}:

\begin{bf}Claim.\end{bf} We can assume that $m_1\ge0$ and $m_2=0$.

Assuming the claim, we can perform an $(m_2,d)$-blow up along $n_2$, so the degeneration of $\cP^1$ along $\spec(R)$ will have two irreducible components: one is the exceptional divisor $\cE$ of the $(m_2,d)$-blow-up, and another component $\cD$. One can check (for example, using that the degree of $\cL$ on $\cE$ is $\frac{1}{kd}$ and \Cref{prop degree 0 = factors through bmuk}) that we can use Theorem \ref{theorem contracting stacky P1s} to contract $\cD$. So the limit will have a single destabilizing $\bP^1$.
\begin{center}
\includegraphics[scale=0.17]{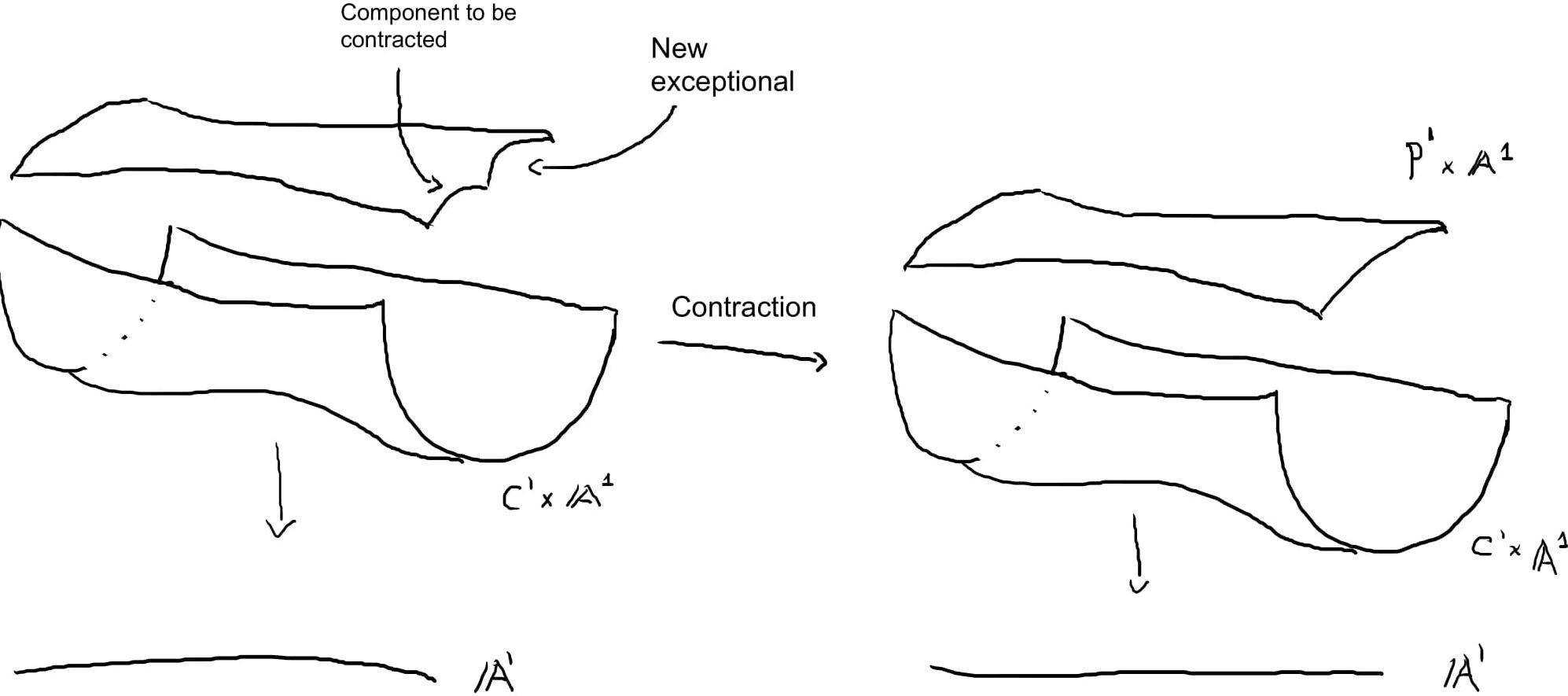} \\ Figure: Contraction of \Cref{theorem contracting stacky P1s}.
\end{center}
\begin{proof}[Proof of the Claim.] The map $\cP^1\to \Theta$ corresponds to the line bundle $\cG:=\cO_{\cP^1}(\frac{1}{dk}[0] -\frac{1}{k}[0] + \frac{1}{k}[\infty])$ on $\cP^1$ with the 0 section. We show that there is an automorphism of $(\cP^1,\cG)$ that acts via $t^{-m_2}$ on the fiber of $\cG$ at $n_2$ and by $\delta \gg0$ on the fiber of $\cG$ at $n_1$.

First, we can act on $\Gm(k(t))$ on the fibers of the line bundle. This will allow us to assume $m_2=0$: if we multiply each fiber by $\pi^{m_2}$ then the gluing map becomes 1. To introduce the next automorphism, we first present its analogue on $\cO_{\bP^1}(1)$ (i.e. in the case where there is no stacky structure).

We can construct the total space of $\cO_{\bP^1}(1)$ as the complement of the locus $(a,b)=(0,0)$ in $[\bA^3/\Gm]$, where the action of $\Gm$ is with weights $(1,1,1).$ Our automorphism will send $(a,b,v)\mapsto (t^\delta a, b, t^{\delta}v)$. This restricts to $\bP^1$ (the section $v=0$) to be the multiplication by $t^{\delta}$ on one chart of $\bP^1$, and by its inverse on the other chart. When $a=0$, the action on the fibers of the line bundle is the multiplication by $t^{\delta}$, and when $b=0$ it is the multiplication by $1$ (as $(t^\delta a, 0, t^{\delta}c)$ is in the same $\Gm$-orbit as $(a, 0, v)$). We will use an analogue of this action on $\cP^1$ to multiply the fibers of the line bundle by $t^\delta$ for $\delta \gg 0$ on $n_1$, and by $1$ on $n_2$.

Recall that $\cP^1$ is constructed as follows. First, we performed an $(m,d)$-blow up. The exceptional $\cD'$ is the $d$-th root stack of $\bP^1$ at $[0]$. Then we consider the $\bmu_k$-action on $\cD'$ by scaling, and we extend it to an action on $\cO_{\cD'}([0] \frac{1}{d})$.
The quotient is the root stack of $\bP^1$ at $[\infty]$ with root $k$, and at $[0]$ with root $dk$. The descended line bundle is $\cO_{\cD}([0] \frac{1}{kd})$ (it is the quotient of the line bundle $\cI$ of Lemma \ref{lemma blow up + covering stack does the job}, restricted to the exceptional). We denote this root stack by $\cD$, and we tensor the descended line bundle by $\cO_\cD(-\frac{1}{k}[0] + \frac{1}{k}[\infty])$ (this is the line bundle that does not extend in \Cref{lemma blow up + covering stack does the job}, restricted to the exceptional). Our final curve $\cP^1$ is the relative coarse moduli space of $\cP^1\to \cB\Gm$, where the map is given by the line bundle $\cO_\cD(\frac{1}{dk}[0] -\frac{1}{k}[0] + \frac{1}{k}[\infty])$. We will construct an automorphism of the line bundle $\cO_\cD(\frac{1}{dk}[0])$ which, when tensored by $\cO_\cD(-\frac{1}{k}[0] + \frac{1}{k}[\infty])$, will descend to the desired automorphism on the corresponding line bundle on $\cP^1$.

We can construct our automorphism \'etale locally. Namely, we construct a $\bmu_k$-invariant automorphism on $\cO_{\cD'}([0] \frac{1}{d})$. The total space of this line bundle is still a quotient of $\bA^3_{a,b,v}\smallsetminus \{(a,b)=(0,0)\}$ by $\Gm$, where the action has weights $(d,1,1)$. As before, we choose an automorphism of the form 
$$(a,b,v)\mapsto (t^{dk\delta}a, b, t^{k\delta}v) \text{ }\text{ }\text{ for }\delta \in \bZ.$$
The action on the fiber $b=0$ is trivial, and on the fiber $b=0$ is by the multiplication by $t^{k\delta}$. A generator $\xi$ of the group $\bmu_k$ sends $$\xi*(a,b,v)\mapsto (\xi a,b, \xi *v)$$
so the automorphism above commutes with this action (as $dk\delta$ is a multiple of $k$). 
\end{proof}

\bibliographystyle{amsalpha}
\bibliography{bibliography}

\end{document}